\title[Matroidal Schur Algebras]{Matroidal Schur Algebras}
\author{Tom Braden}
\author{Carl Mautner}
\address{Tom Braden \\
Dept.\ of Mathematics and Statistics\\
         University of Massachusetts, Amherst}
\email{braden@math.umass.edu}
\address{Carl Mautner \\
Department of Mathematics, University of California, Riverside}
\email{mautner@math.ucr.edu}
\DeclareFontFamily{U}{mathx}{\hyphenchar\font45}
\DeclareFontShape{U}{mathx}{m}{n}{
      <5> <6> <7> <8> <9> <10>
      <10.95> <12> <14.4> <17.28> <20.74> <24.88>
      mathx10
      }{}
\DeclareSymbolFont{mathx}{U}{mathx}{m}{n}
\DeclareMathAccent{\widecheck}{\mathord}{mathx}{"71} 
\newtheorem{theorem}{Theorem}[section]
\newtheorem{lemma}[theorem]{Lemma}
\newtheorem{proposition}[theorem]{Proposition}
\newtheorem{corollary}[theorem]{Corollary}
\theoremstyle{definition}
\newtheorem{definition}[theorem]{Definition}
\newtheorem{question}[theorem]{Question}
\newtheorem{remark}[theorem]{Remark}
\newcommand{\excise}[1]{}
\newcommand{\rk}{\operatorname{rk}}
\renewcommand{\Im}{\operatorname{Im}}
\newcommand{\id}{\operatorname{id}}
\newcommand{\Ker}{\operatorname{Ker}}
\renewcommand{\dim}{\operatorname{dim}}
\newcommand{\Hom}{\operatorname{Hom}}
\newcommand{\End}{\operatorname{End}}
\newcommand{\Z}{{\mathbb{Z}}}
\newcommand{\D}{\mathbb D}
\newcommand{\F}{\mathcal{F}}
\newcommand{\cal}{\mathcal}
\newcommand{\cB}{{\cal B}}
\newcommand{\cF}{{\cal F}}
\newcommand{\cI}{{\cal I}}
\newcommand{\ol}{\overline}
\newcommand{\la}{\langle}
\newcommand{\ra}{\rangle}
\newcommand{\wt}{\widetilde}
\renewcommand{\setminus}{\smallsetminus}
\newcommand{\kk}{{ k}}
\renewcommand{\emptyset}{\varnothing}
\DeclareMathOperator{\bo}{bo}
\DeclareMathOperator{\ci}{ci}
\DeclareMathOperator{\ch}{ch}
\newcommand{\Bas}{\mathop{\mathrm{Bas}}\nolimits}
\newcommand{\cU}{{\mathcal U}}
\newcommand{\cUc}{{\widecheck{\mathcal U}}}
\newcommand{\Uc}{\widecheck{U}}
\newcommand{\Rc}{\widecheck{R}}
\newcommand{\pic}{\widecheck{\pi}}
\newcommand{\ldiv}{\dashv}
\newcommand{\rdiv}{\vdash}
\newcommand{\uc}{{\check{u}}}
\newcommand{\lcon}{\dashV}
\newcommand{\rcon}{\Vdash}
\begin{document}

\maketitle

\section{Introduction}

Fix a principal ideal domain $k$.  In this article we associate to a (weighted) matroid $M$ a quasi-hereditary algebra $R(M)$ defined over $k$ such that matroid duality corresponds to Ringel duality of quasi-hereditary algebras. The representation theory of these algebras is related to work of Schechtman-Varchenko~\cite{SV} and Brylawski-Varchenko~\cite{BV}.  In characteristic zero, our algebras are also closely related to work of Kook-Reiner-Stanton~\cite{KRSlap} and Denham~\cite{Denham}.

While the contents of this paper are purely of an algebraic and combinatorial nature, the algebras described here are a generalization of algebras we discovered in a study~\cite{BMHyperRingel} of the geometry of hypertoric varieties.  In the remainder of this introduction, we first briefly explain our motivation, which comes from the theory of symplectic duality~\cite{BLPWgco} and a geometric description of the Schur algebra~\cite{Mautner,AM}.  We then give a summary of our results and describe the structure of the paper.

\subsection{Background and motivation} 
The Schur algebra $S_{k}(n,n)$ is a quasi-hereditary algebra that plays an important role in the modular representation theory of the general linear and symmetric groups.  There is a natural duality on quasi-hereditary algebras, called Ringel duality, and the Schur algebra $S_k(n,n)$ is its own Ringel dual.

In~\cite{Mautner}, the second author gives a geometric interpretation of the representation theory of the Schur algebra $S_{k}(n,n)$ in terms of equivariant perverse sheaves with coefficients in $k$ on the nilpotent cone $\mathcal{N} \subset \mathfrak{gl}_n$.  Using this description, Achar and the second author~\cite{AM} give a geometric proof of the self-Ringel duality of the Schur algebras $S_k(n,n)$.

The nilpotent cone $\mathcal{N}$ is an important example of a \textit{symplectic singularity}.  An early observation in geometric representation theory was that one can study the universal enveloping algebra $U(\mathfrak{gl}_n)$ in the context of quantization of $\mathcal{N}$.  Recently, much work in geometric representation theory has been focused on studying noncommutative algebras arising from other symplectic singularities via quantization.  In \cite{BLPWgco}, the first author, Licata, Proudfoot and Webster conjecture the existence of a duality for symplectic singularities, relating the quantizations of symplectic dual singularities.  This idea is worked out in detail in the case of hypertoric varities in~\cite{GDKD}.

Motivated by these ideas, we began to study a category of perverse sheaves on affine hypertoric varieties.  In view of the connection between the Schur algebra and perverse sheaves on $\mathcal{N}$, we consider this category to be a \textit{hypertoric} analogue of representations of the Schur algebra.  In~\cite{BMHyperRingel}, we prove that it is highest weight and that its Ringel dual is the corresponding category associated to the Gale dual affine hypertoric variety.

The goal of the current paper is to define and study a class of algebras associated to an arbitrary matroid, generalizing the setting of~\cite{BMHyperRingel}.

Although our original motivation was geometric, the current paper can be read independently of~\cite{BMHyperRingel} and in particular requires no knowledge of geometry, perverse sheaves or hypertoric varieties.

\subsection{Summary and outline}
For any matroid $M$, we define a pair of $k$-algebras $R_k(M)$ and $\Rc_k(M)$, which we dub \textit{matroidal Schur algebras}.  Our main results are:

\begin{theorem}
The algebras $R_k(M)$ and $\Rc_k(M)$ are Ringel dual quasi-hereditary algebras.  There is a natural isomorphism $\Rc_k(M) \cong R_k(M^*)$, where $M^*$ denotes the dual matroid.
\end{theorem}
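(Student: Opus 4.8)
The plan is to prove the three assertions in turn, leaning on the explicit combinatorial presentations of $R_k(M)$ and $\Rc_k(M)$ established above.

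\emph{The isomorphism $\Rc_k(M)\cong R_k(M^*)$.} I expect this to be essentially formal. The combinatorial input used to build $\Rc_k(M)$ --- the relevant (weighted) family of subsets of the ground set of $M$, be it cyclic flats or the data of circuits/cocircuits and bases --- is transported to the input that builds $R_k(M^*)$ by the standard bijections of matroid duality. So I would simply match, generator by generator and relation by relation, the presentation of $\Rc_k(M)$ against that of $R_k(M^*)$. Once this is done, every structural statement we prove about algebras of the form $R_k(-)$ transfers automatically to $\Rc_k(-)$, and conversely.

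\emph{Quasi-heredity of $R_k(M)$.} Here I would (i) fix the weight poset $\Lambda=\Lambda(M)$ --- presumably the poset of (cyclic) flats of $M$ --- together with the complete set of orthogonal idempotents $e_\lambda$, $\lambda\in\Lambda$, with $1=\sum_\lambda e_\lambda$; (ii) construct the standard modules $\Delta(\lambda)$ and costandard modules $\nabla(\lambda)$ directly from the matroid data, exhibiting combinatorial bases; (iii) verify the heredity axioms: $\End_{R_k(M)}(\Delta(\lambda))$ is a quotient of $k$, each indecomposable projective $P(\lambda)$ admits a filtration with top quotient $\Delta(\lambda)$ and remaining subquotients $\Delta(\mu)$ with $\mu>\lambda$, and the associated chain of ideals $R_k(M)\,e_\lambda\,R_k(M)$ is a heredity chain. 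Since $k$ is only a PID one must work with the integral (``split'') notion of quasi-heredity, checking that the relevant $\Hom$- and $\operatorname{Ext}$-modules are free over $k$ of the predicted rank and behave well under base change to residue fields; the combinatorial bases coming from the Schechtman--Varchenko/Brylawski--Varchenko bilinear form should be exactly what makes these ranks visible.

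\emph{Ringel duality, and the main obstacle.} Recall that the Ringel dual of $R_k(M)$ is $\End_{R_k(M)}(T)^{\mathrm{op}}$, where $T=\bigoplus_{\lambda\in\Lambda}T(\lambda)$ is the characteristic tilting module, $T(\lambda)$ being the indecomposable object of $\Tilt(R_k(M))$ with highest weight $\lambda$. The substantive work is to (a) produce each $T(\lambda)$ explicitly from the combinatorics --- I anticipate as an object induced from, or restricted to, a minor of $M$, or realized inside a natural ``big'' $R_k(M)$-module attached to the matroid --- with enough control over both its $\Delta$- and its $\nabla$-filtration; and (b) compute $\End_{R_k(M)}(T)$ as an algebra with presentation. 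Matching the answer with $\Rc_k(M)\cong R_k(M^*)$ requires an order-reversing bijection $\lambda\mapsto\lambda^\vee$ from $\Lambda(M)$ to $\Lambda(M^*)$ --- complementation of cyclic flats is the natural candidate, being order-reversing --- together with isomorphisms $\Hom_{R_k(M)}(T(\lambda),T(\mu))\cong e_{\mu^\vee}\,R_k(M^*)\,e_{\lambda^\vee}$ compatible with composition. I would obtain these either by constructing an explicit $(\Rc_k(M),R_k(M))$-bimodule implementing the Ringel equivalence (sending $\Delta$'s to $\nabla$'s and projectives to tiltings), or by writing out the matrix coefficients of composites of the explicit maps among the $T(\lambda)$ and recognizing the defining relations of $\Rc_k(M)$. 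The hard part is precisely step (b): one must understand the $\Delta$- and $\nabla$-filtrations of the tilting modules sharply enough to pin down $\End(T)^{\mathrm{op}}$ on the nose, and to do so integrally --- freeness over $k$ of the $\Hom$-modules and compatibility of the $T(\lambda)$ with base change. I expect the bilinear-form description of these $\Hom$-spaces to supply the bases that make this tractable, with the combinatorial duality $M\leftrightarrow M^*$ being exactly what manufactures the isomorphism onto $\Rc_k(M)$.
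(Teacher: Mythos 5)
Your third part --- the isomorphism $\Rc_k(M)\cong R_k(M^*)$ --- is in the right spirit and essentially matches the paper: the weight poset is indeed the poset of coloop-free (i.e.\ cyclic) flats, complementation gives the order-reversing bijection, and the paper implements the transport of structure by the explicit operator $\D(v)=v\lcon e_I$ on the exterior algebra, which exchanges $\cU(M)$ with $\cUc(M^*)$ up to signs; so that step really is formal once the construction is set up symmetrically in $(\cU,\cUc)$ and $(M,M^*)$.

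For the other two assertions, however, there is a genuine gap: your plan hinges exactly on the step you concede is the hard one, namely constructing each indecomposable tilting module $T(\lambda)$ with controlled $\Delta$- and $\nabla$-filtrations and computing $\End_{R_k(M)}(T)^{\mathrm{op}}$ by matrix coefficients, and nothing in the proposal indicates how this would actually be done; as written it is a statement of intent, not an argument. The paper's proof avoids this computation entirely by a structural device absent from your outline: the algebras $R$ and $\Rc$ are \emph{defined} as the subalgebras of $\End_k(\cB)$ (and its opposite) generated by multiplication and contraction by elements of $\cU$ and $\cUc$, acting on the single bimodule $\cB=\bigoplus\cB^E_F$ with $\cB^E_F=\cB(M(F)/E)$. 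One then proves a double-centralizer theorem $R=\End_{\Rc}(\cB)$, $\Rc=\End_R(\cB)$ via the decomposition $S^{xy}=\bigoplus_z U^{xz}U^{zy}$, and shows directly that $\cB$ is a tilting module over each algebra (each $\cB_x$ is self-dual because the actions of $U^{xy}$ and $U^{yx}$ are adjoint, and the filtration by the $R\cB^{\le y}_x$ has standard subquotients $\Delta_y\otimes(\cUc^y_x)^*$ by axioms A1--A3). Ringel duality then falls out at once, with no need to identify any $T(\lambda)$ individually. Likewise, quasi-heredity is not proved by verifying heredity-chain axioms on projectives (which your plan would require and which is not directly accessible); it is obtained by exhibiting a cellular structure --- the involution being adjunction with respect to the pairing, the cell ideals being $J'_j=\bigoplus_{x,y\ge z_j}U^{xz_j}U^{z_jy}$ --- and invoking the K\"onig--Xi criterion (counting simples via the supports of the $\cB_y$) together with the Cline--Parshall--Scott integrality criterion over the PID $k$. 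Without either the bimodule/double-centralizer mechanism or some concrete substitute for your step (b), the proposal does not yet establish quasi-heredity or Ringel duality.
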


Section~\ref{sec:Ringel data} contains a general recipe for constructing a Ringel dual pair of quasi-hereditary algebras from certain linear algebra data.  In Section~\ref{sec:matroid}, we show matroids provide an example of such data and thus define the Ringel-dual quasi-hereditary algebras $R_k(M)$ and $\Rc_k(M)$.  The second part of the theorem holds because this construction
is easily seen to be invariant (up to some inconsequential signs) under interchanging the roles of $R$ and $\Rc$ and $M$ and $M^*$.

In~\cite{BMHyperRingel}, we show that the category of perverse sheaves with coefficients in $k$ on a hypertoric variety discussed above is equivalent to the representations of $R_k(M)$ for a corresponding matroid.  In analogy with the result on nilpotent cones, we believe this justifies the name `matroidal Schur algebra.'

Like the original Schur algebra, matroidal Schur algebras are semisimple in all but finitely many charactersitics.  More precisely, we show:

\begin{theorem}
Assume $k$ is a field of characteristic $p$.  The matroidal Schur algebra $R_k(M)$ is semisimple if and only if for any coloop-free flats $K \subset F$ such that $M(F)/K$ is connected, $p$ does not divide the number of elements in $F \setminus K$.
\end{theorem}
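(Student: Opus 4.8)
The plan is to reduce semisimplicity of $R_k(M)$ to a non-degeneracy statement for a family of explicit integer matrices, and then to compute their determinants. Recall that a quasi-hereditary algebra is semisimple if and only if all of its standard and costandard modules are simple: the forward implication is immediate, and conversely, if $\Delta(\lambda)=L(\lambda)=\nabla(\lambda)$ for every weight $\lambda$ then $\mathrm{Ext}^{>0}(L(\lambda),L(\mu))=\mathrm{Ext}^{>0}(\Delta(\lambda),\nabla(\mu))=0$, so the algebra has global dimension zero. Now, the construction of Sections~\ref{sec:Ringel data} and~\ref{sec:matroid} equips each standard module $\Delta(\lambda)$ of $R_k(M)$ with a distinguished $k$-basis and a symmetric bilinear form $B_\lambda$, defined over $\Z$, whose radical is the unique maximal submodule, so that $L(\lambda)=\Delta(\lambda)/\mathrm{rad}(B_\lambda)$; a dual statement holds for $\nabla(\lambda)$. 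Hence $\Delta(\lambda)$ and $\nabla(\lambda)$ are simple if and only if the determinant of the integral Gram matrix $B_\lambda^\Z$ is a unit in $k$, i.e.\ is prime to $p$. The computation below will show that each $\det B_\lambda^\Z$ is, up to sign, a product of positive integers --- in particular non-zero, so that $R_\Q(M)$ is always semisimple --- and the problem is to determine its prime divisors.

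I would compute the integers $\det B_\lambda^\Z$ in two reduction steps followed by one base case. The matroidal data is multiplicative along direct-sum decompositions of minors, so $B_\lambda^\Z$ decomposes as a product of blocks indexed by connected minors of $M$ of the form $M(F)/K$ with $K\subseteq F$; a block for which $F$ is not a flat is identified with the block of its closure, and a block for which the restriction $M(F)$ has a coloop is unimodular (the coloop splits off a trivial factor), so only coloop-free pairs of flats $K\subset F$ contribute non-trivially. It then remains to prove a matroidal analogue of Varchenko's determinant formula: for a connected matroid $N$ on ground set $E$, the determinant of the corresponding block is $\pm\,|E|^{\,m}$ for some integer $m=m(N)\ge 1$. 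Granting this, $\prod_\lambda\det B_\lambda^\Z$ is, up to sign, the product of the integers $|F\setminus K|^{\,m(K,F)}$ with $m(K,F)\ge 1$, taken over all coloop-free flats $K\subset F$ with $M(F)/K$ connected. Since $R_k(M)$ is semisimple if and only if every $\det B_\lambda^\Z$ --- equivalently, their product --- is prime to $p$, we conclude that $R_k(M)$ is semisimple if and only if $p\nmid|F\setminus K|$ for every such pair, which is the stated criterion. (It is symmetric under $M\leftrightarrow M^*$, as it must be given $\Rc_k(M)\cong R_k(M^*)$ and the invariance of semisimplicity under Ringel duality: coloops of $M$ are loops of $M^*$, connectedness of $M(F)/K$ is a self-dual condition, and $|F\setminus K|$ is a matroid-duality invariant.)

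The main obstacle is the determinant formula for the connected block, where one must show both that $|E|$ occurs with strictly positive exponent --- so that $\Delta_k(\lambda)$ genuinely fails to be simple when $p\mid|E|$, giving the sharp ``only if'' direction --- and that no other prime occurs, for the ``if'' direction. I would approach this either by a deletion--contraction induction on $N$, in the spirit of the determinant evaluations of Brylawski--Varchenko~\cite{BV}, or by identifying the block with a boundary map of an explicit integral chain complex attached to $N$ of broken-circuit type, whose reduced homology in the connected case is finite and annihilated by a power of $|E|$; in characteristic zero this recovers the statement that the relevant eigenvalue is $|E|$, in agreement with the matroid-Laplacian computations of Kook--Reiner--Stanton~\cite{KRSlap} and Denham~\cite{Denham}. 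The remaining bookkeeping in the two reduction steps --- verifying that the blocks are indexed by exactly the coloop-free pairs of flats with connected minor, each with positive multiplicity --- should be routine once the explicit description of the construction from Section~\ref{sec:matroid} is in hand.
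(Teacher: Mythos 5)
Your opening reduction is the same as the paper's: semisimplicity of $R_k(M)$ is equivalent to nondegeneracy over $k$ of the cellular forms on the standard modules (Lemma~\ref{lem:char} and the proof of Theorem~\ref{cor-semisimple}), and your final product formula over coloop-free flats $K\subset F$ with $M(F)/K$ connected is the right target. The gap is in the middle. The only decomposition of the Gram form that the construction actually provides is the orthogonal weight-space decomposition, whose blocks are the forms on $\cU(M(F)/E)$ for single minors; and your proposed base case --- for a connected matroid $N$ on ground set $E'$ the block has determinant $\pm|E'|^{m}$, with no other primes --- is false for these blocks. Already for the graphic matroid of $K_4$ (the paper's own example), Corollary~\ref{cor-det} with $a\equiv 1$ gives determinant $6^{2}\cdot 3^{8}$ on $\cU(M)$, the four triangle flats contributing $3^{\beta(M/A_i)\mu^+(A_i)}=3^{2}$ each; this is not $\pm 6^{m}$. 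Worse, for the graphic matroid of $K_4$ minus an edge (connected, $5$ elements) the two triangle flats contribute a factor $2^{4}$, a prime not dividing $5$, so your lemma would wrongly predict semisimplicity in characteristic $2$. A finer integral block decomposition indexed by \emph{all} connected minors $M(F)/K$, which your ``two reduction steps'' presuppose, is not available a priori: over $\Q$ one has the Laplacian eigenspace decomposition of \cite{KRSlap,Denham}, but it is not defined over $\Z$ and does not control elementary divisors. In effect the reduction step assumes exactly the factorization-over-flats that has to be proved, and the base case, read as a statement about an actual block, is again the full determinant formula rather than something simpler.

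What is needed, and what the paper supplies, is the precise Varchenko-type evaluation with its exponents: the determinant of $\la\,,\ra$ on $\cU(M)$ equals the product over coloop-free flats $K\neq I$ of $\bigl(\sum_{i\in I\setminus K}a(i)\bigr)^{\beta(M/K)\mu^+(K)}$ (Corollary~\ref{cor-det}), obtained by identifying the restricted pairing with the Schechtman--Varchenko flag form via $\cU^{\perp}=\Im\delta_h$ (Proposition~\ref{prop-Uperp}) and invoking \cite[Theorem 4.16]{BV}. Those exponents $\beta\cdot\mu^+$ are also what make your ``routine bookkeeping'' come out correctly, and your one-line justification aims at the wrong condition: the pairs that must be excluded are those where $K$, not $F$, fails to be coloop-free. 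For instance, in $M=U_{2,3}$ (the paper's $M_3^{*}$) the pair $K=\{1\}\subset F=I$ has $M(F)/K\cong U_{1,2}$ connected and $|F\setminus K|=2$, yet the determinant is $3$ and the algebra is semisimple in characteristic $2$. So as written, the heart of the proof --- the determinant formula with its exact index set and multiplicities --- is left as a sketch resting on a false intermediate claim; replacing that step by the identification with the BSV form plus the Brylawski--Varchenko theorem (or genuinely carrying out a deletion--contraction proof of the full formula) would make the rest of your argument go through.
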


Our proof, which appears in Section~\ref{sec:characters}, uses a computation from work of Brylawski-Varchenko~\cite{BV}.  The connection to~\cite{BV} and the closely related paper~\cite{SV} of Schechtman-Varchenko is explained in Section~\ref{sec:BSV form}.

In Section~\ref{sec:characters} we also explain how to compute characters and composition series multiplicities in the representation theory of $R_k(M)$ using the Brylawski-Schechtman-Varchenko result, and give some examples.    The algebras can have non-trivial behaviour, even in relatively small examples.

Our construction has connections to a formula of Kook-Reiner-Stanton~\cite{KRSlap} and work of Denham~\cite{Denham}.  We conclude in Section \ref{sec:KRS formula} with a discussion of this relationship and some open questions.

\subsection{Acknowledgements}
We are grateful to Ben Webster and Geordie Williamson for pointing out to us the connection to the papers~\cite{SV,BV}, which they had found while doing related computations.  We would also like to thank the MPIM in Bonn for excellent working conditions.  
The material in this article is also partly based upon work supported by the NSF
under Grant No. 0932078000, while the second author was in residence at MSRI in Berkeley, California, during the Fall 2014 semester.

\section{Combinatorial setup}\label{sec:Ringel data}

In this section, we give a general framework which allows us to construct Ringel dual pairs of quasi-hereditary algebras.
Fix a principal ideal domain $\kk$.  Throughout this document, we will use $\otimes$ to denote the tensor product $\otimes_\kk$ of $k$-modules.

Our construction takes as input the following data:

\begin{itemize}
\item A finite poset $(\cF, \le)$, 
\item A $\kk$-algebra $\cB$, finitely generated and free as a $k$-module, with multiplication $\ast$ which is graded by the poset, meaning that 
   \begin{enumerate}
   \item $\cB = \bigoplus_{x, y} \cB^x_y$ where $x$ and $y$ are elements of $\cF$,
   \item we have $\cB^x_y \ast \cB^y_z \subset \cB^x_z$ for all $x,y,z \in \cF$, and 
   \item $\cB^x_y \ast \cB^w_z =0$ if $y\neq w$.
   \end{enumerate}
\item Two graded, saturated submodules $\cU = \bigoplus \cU^x_y$ and $\cUc = \bigoplus \cUc^x_y$ of $\cB$, and
\item A symmetric perfect pairing $\langle ,\rangle$ on $\cB$, so that the submodules $\cB^x_y \subset \cB$ are mutually orthogonal.
\end{itemize}

The identity element $1 \in \cB$ decomposes as $1 = \sum_{x\in \cF} 1_x$, where
$1_x \in \cB^x_x$.

We let $\ldiv$, $\rdiv$ denote the adjoint operations to left and right multiplication, respectively.  In other words,
\[\langle a \ldiv c, b\rangle = \langle c, a\ast b\rangle = \langle c \rdiv b , a\rangle\]
for any $a \in \cB^x_y$, $b\in \cB^y_z$, $c \in \cB^x_z$.  We will refer to $\ldiv$ as 	``contracting on the left" and $\rdiv$ as ``contracting on the right".

We require that the data $(\cF, \cB, \cU,\cUc, \langle\rangle)$ satisfy the following axioms.
\begin{enumerate}
\item[A1.] (Triangularity)  $\cB^x_y = 0$ unless $y \le x$.  Furthermore, we have 
$\cB^x_x = \cU^x_x = \cUc^x_x \cong \kk \cdot 1_x$ for all $x$, and   
 $\la 1_x, 1_x \ra = 1$ for all $x$.  
 In particular, this means that 
\[
b_2 \ldiv b_1 = \la b_2, b_1\ra 1_y
\]
 for any $b_1, b_2 \in \cB^x_y$.

\item[A2.] Define $\cU^+ = \bigoplus_{y < x} \cU^x_y$ and $\cUc^+ = \bigoplus_{y < x} \cUc^x_y$.  Then we have
\[\cU^\bot = \cB\ast\cUc^+\;\; \mbox{and}\;\; \cUc^\bot = \cU^+\ast \cB.\]
\item[A3.] (Associativity) For any $u \in \cU$, $b\in \cB$ and $\uc \in \cUc$, we have equalities: 
\begin{align*}
(u \ldiv b)\ast \uc & = u \ldiv (b\ast \uc)\\
u\ast (b \rdiv \uc) & = (u\ast b) \rdiv \uc
\end{align*}
\end{enumerate}

\begin{remark} By adjointness the two equations in A3 are equivalent.  Also, by adjointness and associativity of multiplication on $\cB$, we have
\[(a \ldiv b) \rdiv c  = a \ldiv (b \rdiv c)\]
on all of $\cB \otimes \cB \otimes \cB$, not just on $\cU \otimes \cB \otimes \cUc$.
In the example we have in mind, the equations of A3 will \emph{not} hold on all of $\cB \otimes \cB \otimes \cB$, however.

\end{remark}

We observe one simple consequence of these axioms before continuing with our construction.

\begin{lemma}\label{lemma-A4}
 Both $\cU$ and $\cUc$ are subrings of $\cB$. 
Furthermore, we have containments $\cB \ldiv \cU \subset \cU$ and $\cUc \rdiv \cB \subset \cUc$. 
\end{lemma}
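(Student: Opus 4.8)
The plan is to first convert axiom A2 into a description of $\cU$ and $\cUc$ as orthogonal complements, and then verify each containment by pairing against explicit generators. Since $\langle,\rangle$ is a perfect pairing on the free $\kk$-module $\cB$ and $\cU,\cUc$ are saturated, the standard fact that $V^{\bot\bot}=V$ for a saturated submodule $V$ (over a PID, because $V^{\bot\bot}/V$ is a torsion submodule of the torsion-free module $\cB/V$), together with A2, gives
\[
\cU = (\cB\ast\cUc^+)^\bot,\qquad \cUc=(\cU^+\ast\cB)^\bot .
\]
The statements $1\in\cU$ and $1\in\cUc$ then follow immediately from A1, since $1=\sum_{x\in\cF}1_x$ with $1_x\in\cB^x_x=\cU^x_x=\cUc^x_x$.

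I would next dispatch the two contraction containments, which need only A2 and associativity of $\ast$. To show $b\ldiv u\in\cU$ for $b\in\cB$, $u\in\cU$, it suffices to check $\langle b\ldiv u,\,c\ast\uc\rangle=0$ for all $c\in\cB$ and $\uc\in\cUc^+$; by adjointness and associativity of $\ast$ this equals $\langle u,\,(b\ast c)\ast\uc\rangle$, which vanishes since $(b\ast c)\ast\uc\in\cB\ast\cUc^+=\cU^\bot$. The claim $\cUc\rdiv\cB\subset\cUc$ is proved the same way, now pairing $\uc\rdiv b$ against $u\ast c$ with $u\in\cU^+$, $c\in\cB$, and using $\langle\uc\rdiv b,\,u\ast c\rangle=\langle\uc,\,u\ast(c\ast b)\rangle\in\langle\cUc,\cU^+\ast\cB\rangle=\langle\cUc,\cUc^\bot\rangle=0$.

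For the subring statements the extra ingredient is precisely axiom A3. Given $u_1,u_2\in\cU$, to see $u_1\ast u_2\in\cU$ I again pair against $c\ast\uc$ with $c\in\cB$, $\uc\in\cUc^+$: adjointness gives $\langle u_1\ast u_2,\,c\ast\uc\rangle=\langle u_2,\,u_1\ldiv(c\ast\uc)\rangle$, and the first identity of A3 rewrites $u_1\ldiv(c\ast\uc)=(u_1\ldiv c)\ast\uc\in\cB\ast\cUc^+=\cU^\bot$, so the pairing is $0$. Dually, for $\uc_1,\uc_2\in\cUc$ one pairs $\uc_1\ast\uc_2$ against $u\ast c$ with $u\in\cU^+$, $c\in\cB$, uses $\langle\uc_1\ast\uc_2,\,u\ast c\rangle=\langle\uc_1,\,(u\ast c)\rdiv\uc_2\rangle$, and invokes the second identity of A3 to write $(u\ast c)\rdiv\uc_2=u\ast(c\rdiv\uc_2)\in\cU^+\ast\cB=\cUc^\bot$.

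I do not anticipate a genuine obstacle: essentially all the content is in the reformulation via A2, after which A3 is exactly the compatibility that lets a product of elements of $\cU$ be pushed through a left contraction (and dually for $\cUc$ and right contraction). The only points requiring care are the bookkeeping with the poset grading — so that each contraction $b\ldiv u$ is defined and lands in the correct piece $\cB^x_y$, using that the perfect pairing restricts to a perfect pairing on each $\cB^x_y$ — and the elementary fact about double orthogonal complements of saturated submodules quoted above.
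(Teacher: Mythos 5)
Your argument is correct and is essentially the paper's own proof: both reduce membership in $\cU$ (resp.\ $\cUc$) to orthogonality against $\cB\ast\cUc^+$ (resp.\ $\cU^+\ast\cB$) via A2 and saturation, and then kill the pairing $\langle u_1\ast u_2, c\ast\uc\rangle$ by adjointness, A3, and A2 again. You merely spell out the ``other statements follow similarly'' part of the paper (including the correct minor observation that the containments $\cB\ldiv\cU\subset\cU$ and $\cUc\rdiv\cB\subset\cUc$ need only associativity of $\ast$ and A2, not A3).
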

\begin{proof} Take 
$u_1, u_2 \in \cU$. 
Axiom A2 says that to show $u_1 \ast u_2\in \cU$, it is enough to show that \[\langle u_1 \ast u_2 , c\ast \check{u}\rangle = 0\] for any 
$c \in \cB$, $\check{u} \in \cUc^+$.
But we have 
\[\langle u_1\ast u_2 , c \ast \check{u}\rangle = \langle u_2, u_1 \ldiv (c \ast  \check{u})\rangle = \langle u_2, (u_1\ldiv c) \ast \check{u}\rangle = 0\]
by axioms A2 and A3.  The other statements follow similarly.  
\end{proof}

\begin{remark}
The second part of this lemma, which we will not use later, is the only place where we multiply or contract on the left by an element which is not in $\cU$, or on the right by an element which is not in $\cUc$.  In fact, the entire construction would make sense using the weaker structure of partial multiplication maps $\cU \otimes \cB \to \cB$ and $\cB \otimes \cUc \to \cUc$, with appropriate modifications of the axioms.
\end{remark}

We call a tuple $(\cF, \cB, \cU,\cUc, \langle\rangle)$ satisfying A1-A3 a \emph{Ringel datum}.  In this section we show how to use this to construct a Ringel dual pair of quasihereditary $\kk$-algebras $R, \Rc$ such that $\cB$ is an $R$-$\Rc$-bimodule which is simultaneously a tilting generator for $R$ and for $\Rc$.

For each $y \le x$, let $U^{xy} \subset \End_k(\cB)$ be the set of operators $b \mapsto u \ast b$ for $u \in \cU^x_y$, and let $U^{yx} \subset \End_k(\cB)$ be the set of operators $b \mapsto u \ldiv b$.  Since $u \ast 1_x = u$, the natural map $\cU^x_y \to U^{xy}$ is an isomorphism, and by adjunction $U^{yx} \cong \cU^x_y$ as well.

Similarly, 
let $\Uc_{xy}\subset \End_k(\cB)^{\mathrm{opp}}$ be the set of operators $b \mapsto b\ast \check{u}$ and let
$\Uc_{yx}$ be the set of all $b \mapsto b \rdiv \check{u}$, where $\uc$ runs over all elements of $\cUc^x_y$.  As before, both of these spaces are isomorphic to $\cUc^x_y$.

\begin{definition}
Let $R$ (resp. $\Rc$) to be the subalgebra of $\End_k(\cB)$ generated by all $U^{xy}$, $U^{yx}$ (resp. $\Uc_{xy}$, $\Uc_{yx}$), so that $\cB$ is an $R$-$\Rc$-bimodule.
\end{definition}

Note that for each $x \in \cF$, there is an idempotent $\pi_x \in R$ (resp. $\pic_x \in \Rc$) defined by the operator $b \mapsto 1_x \ast b$ (resp. $b \mapsto b\ast 1_x$), which acts on $\cB$ as the projection onto $\cB^x := \bigoplus_y \cB^x_y$ (resp. $\cB_x := \bigoplus_y \cB^y_x$) . 

These idempotents yield decompositions $R = \oplus_{x,y \in \cF} R^{xy}$ and $\Rc = \oplus_{y,z \in \cF} \Rc_{yz}$, where $R^{xy} = \pi_x R \pi_y$ and $\Rc_{yz} = \pic_y \Rc \pic_z$, and the actions of $R$ and $\Rc$ on $\cB$ break up into maps
\[R^{xy} \otimes \cB^y_z \to \cB^x_z\;\; \mbox{and} \;\; \cB^x_y \otimes \Rc_{yz} \to \cB^x_z.\]

\begin{theorem}
The actions of $R$ and $\Rc$ on $\cB$ 
centralize each other; in other words $R = \End_{\Rc}(\cB)$ and
$\Rc = \End_R(\cB)$.  
\end{theorem}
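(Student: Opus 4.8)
The plan is to prove $R=\End_{\Rc}(\cB)$; the statement $\Rc=\End_R(\cB)$ then follows by the symmetry of the axioms A1--A3 under interchanging $(\cU,R,\text{left})$ with $(\cUc,\Rc,\text{right})$ and reversing the multiplication on $\cB$. So I must show $R\subseteq\End_{\Rc}(\cB)$ and $\End_{\Rc}(\cB)\subseteq R$. For the first inclusion I would check that the generators of $R$ commute in $\End_\kk(\cB)$ with those of $\Rc$, in four cases: left multiplication by $u\in\cU$ commutes with right multiplication by $\uc\in\cUc$ by associativity of $\ast$; left multiplication versus right contraction, and left contraction versus right multiplication, are precisely the two (equivalent) identities of A3; and left contraction versus right contraction is the identity $(u\ldiv b)\rdiv\uc=u\ldiv(b\rdiv\uc)$ from the Remark after A3. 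This gives $R\subseteq\End_{\Rc}(\cB)$.

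For the reverse inclusion, take $\phi\in\End_{\Rc}(\cB)$. Since $\pic_x\in\Rc$ acts as right multiplication by $1_x$, $\phi$ preserves each $\cB_x=\bigoplus_y\cB^y_x$, so $\phi(1_x)\in\cB_x$. Next I claim $\phi(1_x)\in\cU$: for $\uc\in\cUc^+$, the grading together with triangularity A1 forces $1_x\rdiv\uc=0$ (a nonzero value would have to lie in some $\cB^x_w$ with $w>x$), so $\phi(1_x)\rdiv\uc=\phi(1_x\rdiv\uc)=0$, whence $\langle\phi(1_x),b\ast\uc\rangle=\langle\phi(1_x)\rdiv\uc,b\rangle=0$ for all $b\in\cB$; by A2 this says $\phi(1_x)\perp\cU^\bot$, and since $\cU$ is saturated the perfect pairing gives $\phi(1_x)\in(\cU^\bot)^\bot=\cU$. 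Writing $\phi(1_x)\in\bigoplus_w\cU^w_x$, set $\Phi=\sum_{x\in\cF}L_{\phi(1_x)}\circ\pi_x$, where $L_v$ is left multiplication by $v$. Each $L_{\phi(1_x)}$ is a sum of the generators $U^{wx}$ and $\pi_x=L_{1_x}$, so $\Phi\in R$, and a short computation gives $\Phi(1_z)=\phi(1_z)$ for every $z$.

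It remains to show $\phi=\Phi$, which is the heart of the proof. The key structural input I would establish is that $\cB$ is spanned by products from $\cU\ast\cUc$; since $\cU,\cUc\subseteq\cU\ast\cUc$ and $(\cU\ast\cUc)^\bot\subseteq\cU^\bot\cap\cUc^\bot=(\cU+\cUc)^\bot$, this is equivalent to $\cU^x_y+\cUc^x_y=\cB^x_y$ for all $y\le x$. I would prove the latter by induction on the interval $[y,x]$ in $\cF$: the case $x=y$ is A1, and for $x>y$ one rewrites $(\cU^x_y)^\bot$ and $(\cUc^x_y)^\bot$ using the two halves of A2, applies the inductive hypothesis to the strictly smaller pieces $\cB^x_w,\cB^w_y$ ($y<w<x$) that appear, and uses that $\cU$ and $\cUc$ are subrings (Lemma~\ref{lemma-A4}); saturation is again what lets one pass from the dimension count over the fraction field of $\kk$ to the integral statement. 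Granting $\cB=\cU\ast\cUc$, both $\phi$ and $\Phi$ commute with right multiplication by $\cUc$, so each is determined by its restriction to $\cU$ via $\phi\bigl(\sum u_i\ast\uc_i\bigr)=\sum\phi(u_i)\ast\uc_i$; and the agreement of $\phi$ and $\Phi$ on $\cU$ I would extract from the agreement on the $1_x$ by a second induction on $[q,p]$ for $u\in\cU^p_q$, using A2, A3, and the identity $u\rdiv\uc=\langle u,\uc\rangle\,1_p$ for $u,\uc\in\cB^p_q$.

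The main obstacle is this final step: establishing the generation lemma $\cU+\cUc=\cB$ and identifying $\phi$ with the left-multiplication operator $\Phi$ on all of $\cU$. Both require organizing the poset inductions carefully and tracking the interplay of the grading, the triangularity A1, the identities A2--A3, and the perfect pairing, with the saturation hypothesis ensuring the arguments work over an arbitrary principal ideal domain rather than only over a field.
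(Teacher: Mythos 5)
Your first inclusion $R\subseteq\End_{\Rc}(\cB)$ and the observation that $\phi(1_x)\in\cU$ are correct, but the two pillars of your reverse inclusion both fail. The ``key structural input'' that $\cB$ is spanned by $\cU\ast\cUc$ (equivalently, up to the middle terms, that $\cU^x_y+\cUc^x_y=\cB^x_y$) is not a consequence of A1--A3, and it is false precisely in the situations the paper is designed to handle: for the matroid $M_2$ of two parallel points with weights $a(1)=a(2)=1$ and $\kk=\mathbb{F}_2$, the only coloop-free flats are $\bm{1}$ and $\bm{0}$, and $\cU^{\bm 1}_{\bm 0}=\cUc^{\bm 1}_{\bm 0}=\langle e_1+e_2\rangle$ inside the two-dimensional $\cB^{\bm 1}_{\bm 0}$, so $\cU\ast\cUc$ meets this piece only in a line; over $\kk=\Z$ the span is a proper finite-index sublattice, so saturation cannot rescue the induction you sketch. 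This is exactly the failure of the semisimple decomposition \eqref{eq:springer} recorded in Section~\ref{sec:KRS formula}, so no poset induction from the axioms can establish it.

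Independently of that, the endgame $\phi=\Phi$ is false, because $\End_{\Rc}(\cB)$ contains contraction operators, which are invisible to your $\Phi$. Concretely, for $0\neq u_2\in\cU^x_y$ with $y<x$, the operator $\phi\colon b\mapsto u_2\ldiv b$ lies in $R\subseteq\End_{\Rc}(\cB)$ and kills every $1_z$ (its value would have to lie in $\cB^y_x=0$ by A1), so your $\Phi=0$; yet $\phi(b)=\la u_2,b\ra 1_y$ is nonzero for some $b\in\cB^x_y$ since the pairing is perfect, and whenever $\la\, ,\ra$ is nondegenerate on $\cU^x_y$ (the generic, semisimple situation) $\phi$ is already nonzero on $\cU$, so agreement on the $1_x$ cannot propagate to $\cU$ either. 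In other words, elements of $\End_{\Rc}(\cB)$ are neither determined by their values on the idempotents nor expressible as left multiplications composed with projections. The paper's argument is structured differently: it sets $S=\End_{\Rc}(\cB)$ and proves Proposition~\ref{prop:cellular}, namely $S^{xy}=\bigoplus_{z\le x,y}U^{xz}U^{zy}$, by downward induction along a linear extension of $\cF$: commutation with $\Rc$ together with A2 shows that the action of a given $s$ on the lowest surviving piece $\cB^y_z$ has image in $\cU^x_z$ and kernel containing $(\cU^y_z)^\perp$, hence is realized by a unique element of $U^{xz}U^{zy}$, which one subtracts and repeats. Your operators $L_{\phi(1_y)}\circ\pi_y$ account only for the $z=y$ summands of that decomposition, so the proof of $\End_{\Rc}(\cB)\subseteq R$ is not established.
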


Since the roles of $R$ and $\Rc$ are symmetric, we only need to show that $R = \End_{\Rc}(\cB)$.  Note that by axiom A3 the actions of $R$ and $\Rc$ on $\cB$ commute, so if we put $S := \End_{\Rc}(\cB)$, then $R \subset S$.  
The other inclusion is a  consequence of the following more precise result.  

Note that $\pi_x$ lies in $S$, so $S$ decomposes as $S = \bigoplus_{x,y} S^{xy}$ with $S^{xy} = \pi_xS\pi_y$.  On the other hand, $S$ commutes with the $\pic_z$, so the action of $S^{xy}$ on $\cB$ breaks up into maps
\[S^{xy} \otimes \cB^y_z \to \cB^x_z.\]

\begin{proposition}\label{prop:cellular}
For any $x, y \in \cF$, we have a direct sum decomposition
\begin{equation}\label{Cellular decomp}
S^{xy} = \bigoplus_{\substack{z \le x\\ z\le y}} U^{xz}U^{zy}.
\end{equation}
Furthermore, the multiplication maps $U^{xz} \otimes U^{zy} \to R^{xy}$ are all injective.
\end{proposition}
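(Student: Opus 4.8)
The plan is to work throughout with the identification of $S^{xy} = \pi_x S \pi_y$ with the $\kk$-module of right $\Rc$-linear maps $\cB^y \to \cB^x$, extended by zero on the remaining summands $\cB^w$; since any such map commutes with each $\pic_z$, it decomposes into components $\cB^y_z \to \cB^x_z$. The computation underlying everything is that for $u \in \cU^x_z$ and $v \in \cU^y_z$ the operator $b \mapsto u \ast (v \ldiv b)$ acts on the ``diagonal'' block $\cB^y_z$ by $b \mapsto \langle v, b\rangle\, u$ (using the last identity in A1) and acts as $0$ on $\cB^y_w$ unless $w \le z$ (by the triangularity part of A1). I would first record the key injectivity: because $\langle,\rangle$ restricts to a perfect pairing on each block $\cB^y_z$ and the submodules $\cU^x_z, \cU^y_z$ are saturated, hence free, the map
\[
\cU^x_z \otimes \cU^y_z \longrightarrow \Hom_{\kk}(\cB^y_z, \cB^x_z), \qquad u \otimes v \longmapsto \bigl(b \mapsto \langle v, b\rangle\, u\bigr)
\]
is injective, as it factors as $\cU^x_z \otimes \cU^y_z \hookrightarrow \cU^x_z \otimes (\cB^y_z)^* \cong \Hom_{\kk}(\cB^y_z, \cU^x_z) \hookrightarrow \Hom_{\kk}(\cB^y_z, \cB^x_z)$. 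Via the isomorphisms $U^{xz} \cong \cU^x_z$, $U^{zy} \cong \cU^y_z$, this already gives the last assertion: the multiplication map $U^{xz} \otimes U^{zy} \to R^{xy}$ is injective, since even its composite with restriction to the $\cB^y_z$-block is.

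Second, I would deduce that the sum $\sum_z U^{xz}U^{zy}$ is direct. If $\sum_z \phi_z = 0$ with $\phi_z \in U^{xz}U^{zy}$, choose $w$ maximal among the indices with $\phi_z \neq 0$ and restrict the relation to $\cB^y_w$: every term with $z \neq w$ dies there (by triangularity when $w \not\le z$, by maximality when $w < z$), so $\phi_w|_{\cB^y_w} = 0$, and the injectivity above forces $\phi_w = 0$, a contradiction. Combined with spanning this gives the asserted direct sum decomposition of $S^{xy}$.

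The substantive step is spanning, $S^{xy} = \sum_{z \le x,\, z \le y} U^{xz}U^{zy}$, and this is where axiom A2 is used. Given $\phi \in S^{xy}$, put $W(\phi) = \{w : \phi|_{\cB^y_w} \neq 0\}$; if $W(\phi) = \emptyset$ then $\phi = 0$, so assume not and choose a maximal element $w \in W(\phi)$. Using right $\Rc$-linearity and A2 I would check: (i) $\phi(\cB^y_w) \subseteq \cU^x_w$, because A2 identifies $\cU^x_w$ with $\{c \in \cB^x_w : c \rdiv \uc = 0 \text{ for all } \uc \in \cUc^v_w,\ w < v \le x\}$, and for $b \in \cB^y_w$ one has $\phi(b) \rdiv \uc = \phi(b \rdiv \uc) \in \phi(\cB^y_v) = 0$ (since $v > w$ implies $v \notin W(\phi)$ by maximality of $w$, and $\cB^y_v = 0$ if $v \not\le y$); and (ii) $\phi$ annihilates $(\cU^y_w)^{\bot} \cap \cB^y_w$, because A2 identifies this with $\sum_{w < v \le y} \cB^y_v \ast \cUc^v_w$ and $\phi(c \ast \uc) = \phi(c)\ast\uc = 0$ for $c \in \cB^y_v$ with $v > w$. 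Hence $\phi|_{\cB^y_w}$ factors as $\cB^y_w \twoheadrightarrow \cB^y_w/(\cU^y_w)^{\bot} \cong (\cU^y_w)^* \to \cU^x_w \hookrightarrow \cB^x_w$, that is, it is the restriction to $\cB^y_w$ of the operator $\phi_w \in U^{xw}U^{wy}$ attached to the corresponding element of $\cU^x_w \otimes \cU^y_w$. Replacing $\phi$ by $\phi - \phi_w$: since $\phi_w$ agrees with $\phi$ on $\cB^y_w$ and vanishes on $\cB^y_v$ for all $v \not\le w$, the smallest lower set of $\cF$ containing $W(\phi - \phi_w)$ is contained in the lower set generated by $W(\phi)$ with the maximal element $w$ deleted, hence is strictly smaller. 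Inducting on the cardinality of the lower set generated by $W(\phi)$ — the base case being $\phi = 0$ — and grouping the subtracted operators by their index $z$ exhibits $\phi$ in $\sum_z U^{xz}U^{zy}$. This completes the decomposition (and, since each $U^{xz}, U^{zy}$ lies in $R$, also shows $R^{xy} = S^{xy}$, hence $R = S$).

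I expect the spanning step to be the main obstacle. It is the only place axiom A2 is essential, and the delicate point is that the induction must be run on the lower set generated by the support $W(\phi)$, not on $W(\phi)$ itself: subtracting a leading operator $\phi_w$ kills the block $\cB^y_w$ but may create new nonzero components on blocks $\cB^y_v$ with $v < w$, so only the lower-set measure is guaranteed to decrease.
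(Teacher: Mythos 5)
Your proof is correct and follows essentially the same route as the paper: the same rank-one-operator computation gives injectivity and the characterization of the image of $U^{xz}U^{zy}$ on the block $\cB^y_z$, and the same use of axiom A2 together with $\Rc$-linearity pins down the top block of an element of $S^{xy}$ before peeling it off. The only cosmetic difference is that you induct on the size of the lower set generated by the support of $\phi$, whereas the paper fixes a linear extension $z_1,\dots,z_n$ of $\cF$ in advance and runs the same subtraction argument along it.
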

\begin{proof}
First, to see that $U^{xz} \otimes U^{zy} \to R^{xy}$ is injective, consider the action of $U^{xz} \otimes U^{zy}$ on $\cB^y_z$.  It induces a natural map
 \[\cU^x_z \otimes \cU^y_z \stackrel{\sim}{\to} U^{xz}\otimes U^{zy} \to \Hom_\kk(\cB^y_z, \cB^x_z)\]
sending $u_1 \otimes u_2$ to the map $b \mapsto u_1 \ast (u_2 \ldiv b) = \langle b, u_2\rangle u_1$, which is clearly injective.  Note as well that because $\cU_y^z \subset \cB_y^z$ is saturated, the image of $U^{xz} \otimes U^{zy}$ in $\Hom_\kk(\cB^y_z,\cB^x_z) = (\cB^y_z)^* \otimes \cB^x_z$ is equal to the set of all maps $\phi:\cB^y_z \to \cB^x_z$, such that $\phi(\cB^y_z) \subset \cU^x_z$ and $\phi((\cU^y_z)^\perp)=0$.

To prove that the sum on the right side of \eqref{Cellular decomp} is direct, we need to show that $U^{xz}U^{zy} \cap U^{xz'}U^{z'y} = 0$ for any $z' \neq z$.
Without loss of generality, we may assume that $z \nleq z'$. Note that the triangularity axiom (A1) implies that any element of $U^{xz'}U^{z'y}$ kills $\cB^y_z$.  On the other hand, by the previous remark, $U^{xz}U^{zy}$ maps injectively into $\Hom_\kk(\cB^y_z, \cB^x_z).$  We conclude that the intersection is trivial as was desired.

Now, choose an arbitrary $s\in S^{xy}$.  To see that it lies in the right side of \eqref{Cellular decomp}, choose a total ordering $z_1, \dots, z_n$ of
$\cF$ so that $z_i \le z_j$ implies $i \le j$.  Let $s_n=s$.  We will now inductively construct a sequence of elements in $s_{n-1}, \dots, s_1,s_0 \in S^{xy}$ such that $s_j(\cB^y_{z_i})=0$ for all $i>j$ and $s_{j+1}-s_{j} \in U^{xz_j}U^{z_jy}$.  Observe that if we can construct such a sequence, it will follow that $s \in \oplus_{i} U^{xz_i}U^{z_i y}$.

Suppose we have defined $s_i \in S^{xy}$ with the desired properties for all $i\geq j$.  We argue that there exists $w_j \in U^{xz_j}U^{z_jy}$ such that $(s_j-w_j)(\cB^y_{z_i})=0$ for all $i \geq j$.  

As $s_j$ commutes with the action of $\Rc$, for any $q > z_j$, we have
\[ s_j(\cB^y_q \ast \cUc^q_{z_j}) = s_j(\cB^y_q) \ast \cUc^q_{z_j} = 0. \]
By axiom A2, this means that $s_j((\cU^y_{z_j})^\perp) = 0$.

Similarly, for $q > z_j$ as before, we have
\[ s_j(\cB^y_{z_j})\rdiv \cUc^q_{z_j} = s_j(\cB^y_{z_j}\rdiv \cUc^q_{z_j}) \subset s_j(\cB^y_q) =0. \]
Thus, $\langle s_j(\cB^y_{z_j}), \cB \ast \cUc^q_{z_j} \rangle = \langle s_j(\cB^y_{z_j})\rdiv \cUc^q_{z_j}, \cB \rangle = 0,$
from which we conclude by axiom A2 that $s_j(\cB^y_{z_j}) \subset \cU^x_{z_j}$.

As shown above, any map $\cB^y_{z_j} \to \cB^x_{z_j}$ whose image is contained in $\cU^x_{z_j}$ and kernel contains $(\cU^y_{z_j})^\perp$ arises as the restriction of a unique element of $U^{xz_j}U^{z_j y}$, which we define to be $w_j$.

We then let $s_{j-1}:= s_j-w_j$.  Note that $s_{j-1}$ satisfies the desired properties and our induction is complete.
\end{proof}

\begin{corollary}
For any $x, y \in \cF$, we have a direct sum decomposition
\[
R^{xy} = \bigoplus_{\substack{z \le x, y}} U^{xz}U^{zy} \;\; \mbox{and} \;\; \Rc_{xy} = \bigoplus_{\substack{z \ge x, y}} \Uc_{xz}\Uc_{zy}.
\]
\end{corollary}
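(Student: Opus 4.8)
The plan is to extract both isomorphisms from Proposition~\ref{prop:cellular}, since the cellular decomposition proved there for $S^{xy}$ (where $S=\End_{\Rc}(\cB)$) is in fact already a decomposition of $R^{xy}$.

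First I would observe that every operator in $U^{xz}U^{zy}$ is by construction a composite of operators $b\mapsto u\ast b$ and $b\mapsto u\ldiv b$ with $u\in\cU$, so $U^{xz}U^{zy}\subset R$; summing over $z\le x,y$ and using Proposition~\ref{prop:cellular} gives $S^{xy}\subset R$. Because $S^{xy}=\pi_x S^{xy}\pi_y$ with $\pi_x,\pi_y\in R$, this forces $S^{xy}\subset\pi_xR\pi_y=R^{xy}$, and combined with the reverse inclusion $R\subset S$ (a consequence of A3, as noted before Proposition~\ref{prop:cellular}) we get $R^{xy}=S^{xy}$ for all $x,y$ — which simultaneously establishes the centralizer theorem $R=\End_{\Rc}(\cB)$. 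The displayed decomposition of $R^{xy}$, and the injectivity of the maps $U^{xz}\otimes U^{zy}\to R^{xy}$, are then exactly the two assertions of Proposition~\ref{prop:cellular}.

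For the statement about $\Rc_{xy}$ I would invoke the symmetry of the setup. The axioms A1--A3 are invariant under the involution that reverses the order on $\cF$, replaces $\cB$ by its opposite algebra with the two grading labels interchanged, swaps $\cU\leftrightarrow\cUc$, and swaps $\ldiv\leftrightarrow\rdiv$ (the perfect pairing is unchanged); so this involution sends a Ringel datum to a Ringel datum. Under it the generators $U^{xy},U^{yx}$ of $R$ are carried to the generators $\Uc_{xy},\Uc_{yx}$ of $\Rc$ — the passage to $\End_k(\cB)^{\mathrm{opp}}$ in the definition of $\Rc$ is precisely the opposite-algebra twist — and the idempotents $\pi_x$ go to $\pic_x$. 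Applying the first part to the twisted datum then yields $\Rc_{xy}=\bigoplus_{z\ge x,y}\Uc_{xz}\Uc_{zy}$ and the injectivity of $\Uc_{xz}\otimes\Uc_{zy}\to\Rc_{xy}$.

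I expect the only delicate point to be the bookkeeping in the last paragraph: spelling out carefully that the four substitutions genuinely intertwine the two halves of the construction — matching $b\mapsto u\ast b$ on $\cB$ with $b\mapsto b\ast\uc$ on $\cB^{\mathrm{opp}}$, and $\ldiv$ on $\cB$ with $\rdiv$ on $\cB^{\mathrm{opp}}$, and tracking the index conventions through the swap. This is the symmetry already used in the remark ``Since the roles of $R$ and $\Rc$ are symmetric\ldots''; no new construction or estimate is needed once it is made explicit.
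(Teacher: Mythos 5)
Your proposal is correct and follows essentially the same route as the paper: the corollary is meant as an immediate consequence of Proposition~\ref{prop:cellular} together with the inclusion $R\subseteq S=\End_{\Rc}(\cB)$ (from A3) and the fact that each $U^{xz}U^{zy}$ lies in $R$, giving $R^{xy}=S^{xy}$, while the statement for $\Rc$ is obtained exactly as you do, by the $R$/$\Rc$ symmetry of the Ringel-datum axioms (reverse the poset, pass to the opposite algebra, swap $\cU\leftrightarrow\cUc$ and $\ldiv\leftrightarrow\rdiv$) that the paper invokes with ``Since the roles of $R$ and $\Rc$ are symmetric.'' Your explicit verification of that symmetry is a reasonable expansion of what the paper leaves implicit; no gap.
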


We will use the above description to prove that $R$ and $\Rc$ are quasi-hereditary.  First, we recall the definition of cellular algebras and their relation to quasi-hereditary algebras.  Cellular algebras were introduced by Graham and Lehrer~\cite{GrLe} in terms of the behaviour of a nice basis.  K\"onig and Xi~\cite[Section 3]{KXstruc} have since shown that the following is an equivalent basis-free definition.

\begin{definition}
Let $A$ be an algebra over $k$ endowed with an involution $i$ that is an anti-automorphism.
\begin{enumerate}
\item A two-sided ideal $J \subset A$ is said to be a \textbf{cell ideal} if there exists a left ideal $\Delta \subset A$ such that $\Delta$ is a finitely generated and free as a $k$-module and there exists an isomorphism of $A$-bimodules $\alpha \colon J \cong \Delta \otimes i(\Delta)$ for which the following diagram is commutative:
\[
\xymatrix{
J \ar[rr]^\alpha \ar[d]_i && \Delta \otimes i(\Delta) \ar[d]^{x\otimes y \mapsto i(y) \otimes i(x)} \\
J \ar[rr]^\alpha && \Delta \otimes i(\Delta).
}
\]
\item $A$ is said to be \textbf{cellular} if there is a decomposition of $k$-modules $A = J'_1 \oplus \dots \oplus J'_n$ such that $i(J'_j) = J'_j$ and $J_j := \oplus_{l=1}^j J'_l$ is a two-sided ideal for all $j$ and the quotient $J'_j = J_j/J_{j-1} \subset A/J_{j-1}$ is a cell ideal.  In this case, the chain of ideals $0 \subset J_1 \subset \dots \subset J_n = A$ is called a \textbf{cellular chain}.
\end{enumerate}
\end{definition}

K\"onig and Xi also proved the following:

\begin{lemma} \label{lemma:cellqh}
\cite[Lemma 2.1(3)]{KXwhen} 
If a cellular algebra over a field $\mathbb{F}$ has a cellular chain of length $n$, then the number of isomorphism classes of simple $R$-modules is bounded by $n$.  Moreover, equality holds if and only if $A$ is quasi-hereditary with heredity chain given by the cellular chain but with the ideals indexed in reverse order (cf. the warning following \cite[Cor. 4.2]{KXstruc}) and the standard modules $\Delta_x$ for $A$ are just the cell modules.
\end{lemma}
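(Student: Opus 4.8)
This lemma is due to K\"onig and Xi, and in practice we only need it as a citation, \cite[Lemma~2.1(3)]{KXwhen} (building on \cite{GrLe} and \cite[Section~3]{KXstruc}); nevertheless, here is a sketch of how one proves it. The plan is to extract from each cell ideal a bilinear form and to show that these forms control simultaneously the simple modules and the quasi-hereditary property. Concretely, to the cell ideal $J'_j \subset A/J_{j-1}$, with bimodule isomorphism $\alpha\colon J'_j \cong \Delta_j \otimes i(\Delta_j)$ compatible with $i$, one associates the bilinear form $\phi_j\colon i(\Delta_j)\otimes\Delta_j \to k$ determined by $\alpha^{-1}(u\otimes i(v))\cdot\alpha^{-1}(u'\otimes i(v')) = \alpha^{-1}\big(\phi_j(i(v)\otimes u')\,u\otimes i(v')\big)$. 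The \emph{cell module} $\Delta_j$, viewed as a left $A$-module via $A\twoheadrightarrow A/J_{j-1}$, then carries a bilinear form compatible with the $A$-action through $i$, so $\operatorname{rad}(\phi_j)$ is a submodule and $L_j := \Delta_j/\operatorname{rad}(\phi_j)$ is simple if $\phi_j\neq 0$ and zero otherwise.

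First I would prove the counting bound. By a standard argument the nonzero $L_j$ are pairwise non-isomorphic and exhaust the simple $A$-modules: a given simple is annihilated by some $J_{j-1}$ but not by $J_j$, hence receives a surjection from $\Delta_j$ which factors through $L_j$. Therefore the number of simple $A$-modules is at most $n$, with equality precisely when $\phi_j\neq 0$ for all $j$.

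Next I would identify the condition ``$\phi_j\neq 0$ for all $j$'' with the statement that the cellular chain, with its ideals in reverse order (cf.\ the warning after \cite[Cor.~4.2]{KXstruc}), is a heredity chain. For the forward direction, when $\phi_j\neq 0$ choose basis elements with $\phi_j(i(v_1)\otimes u_1)=1$; then $e_j:=\alpha^{-1}(u_1\otimes i(v_1))$ is an idempotent of $A/J_{j-1}$ satisfying $J'_j = (A/J_{j-1})e_j(A/J_{j-1})$, with $(A/J_{j-1})e_j\cong\Delta_j$ projective and $e_j(A/J_{j-1})e_j\cong k$ semisimple, which are exactly the defining conditions for $J'_j$ to be a heredity ideal of $A/J_{j-1}$. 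Conversely, a heredity ideal is idempotent, so $\phi_j=0$ (equivalently $(J'_j)^2=0$) obstructs the chain from being a heredity chain. Since each step of this chain contributes exactly one simple module (as $e_j(A/J_{j-1})e_j\cong k$), the existence of such a heredity chain of length $n$ is equivalent to there being exactly $n$ simples, and comparing the two constructions step by step identifies the standard modules of the resulting quasi-hereditary structure with the cell modules $\Delta_j$.

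The main obstacle is the step producing, from $\phi_j\neq 0$, the heredity ideal: exhibiting the idempotent $e_j$ and checking that $(A/J_{j-1})e_j$ is projective (equivalently, a direct summand of $A/J_{j-1}$) and that $e_j(A/J_{j-1})e_j\cong k$, while correctly handling the tower of quotients $A/J_{j-1}$ and the order reversal needed to match the usual bottom-up convention for heredity chains. The rest is bookkeeping with the bimodule structure of the cell ideals, which is why we are content to cite \cite{KXwhen}.
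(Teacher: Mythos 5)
The paper offers no proof of this lemma---it is quoted verbatim as a citation of K\"onig--Xi \cite[Lemma 2.1(3)]{KXwhen}---so citing it, as you note, is all that is required; your sketch is nonetheless correct and follows the standard Graham--Lehrer/K\"onig--Xi route (the bilinear form $\phi_j$ extracted from multiplication in each cell ideal, the radical quotients $L_j$ exhausting the simples to give the bound by $n$, and, when every $\phi_j\neq 0$, the idempotent $e_j$ exhibiting each cell quotient as a heredity ideal, with the converse via idempotence of heredity ideals). This is essentially the same argument as in the cited source, including your correct flagging of the index-reversal convention, so nothing further is needed.
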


With these preliminaries, we may now show:

\begin{theorem}
$(\Rc, \cF)$ and $(R, \cF^{opp})$ are quasi-hereditary algebras.
\end{theorem}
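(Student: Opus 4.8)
The plan is to show that $R$ and $\Rc$ are cellular algebras with cellular chains of length $|\cF|$, and then apply Lemma~\ref{lemma:cellqh} together with a count of simple modules to upgrade cellularity to quasi-heredity. By the symmetry of the construction it suffices to treat $R$; the statement for $\Rc$ follows by interchanging the roles of $\cU$ and $\cUc$, left and right, and reversing the order on $\cF$.

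First I would produce the anti-automorphism $i$ on $R$. The pairing $\la\,,\,\ra$ gives, for each $x,y$, identifications $U^{xy}\cong\cU^x_y\cong U^{yx}$, and I expect the map sending the operator $b\mapsto u\ast b$ (for $u\in\cU^x_y$) to the operator $b\mapsto u\ldiv b$, extended multiplicatively and reversing the order of products, to define an involutive anti-automorphism $i\colon R\to R$ with $i(\pi_x)=\pi_x$. One must check this is well defined — i.e.\ that it respects the relations in $R$ — which is where Axiom A3 (associativity) and the adjointness defining $\ldiv,\rdiv$ do the work; by Lemma~\ref{lemma-A4} the relevant products stay inside $\cU$, so the computation closes up. Concretely, on $R^{xy}=\bigoplus_{z\le x,y}U^{xz}U^{zy}$ the map $i$ interchanges $U^{xz}U^{zy}$ with $U^{yz}U^{zx}=R^{yx}$, matching the Corollary's decomposition.

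Next I would build the cellular chain. Choose the total order $z_1,\dots,z_n$ of $\cF$ refining $\le$ as in the proof of Proposition~\ref{prop:cellular}, and for $1\le j\le n$ set $J_j=\bigoplus_{x,y}\bigoplus_{\,z\le x,y,\ z\in\{z_1,\dots,z_j\}}U^{xz}U^{zy}\subset R$, with $J'_j=J_j/J_{j-1}$. The Corollary's decomposition of $R^{xy}$ shows the $J_j$ partition $R$ as $k$-modules, and triangularity (A1) — any element of $U^{xz}U^{zy}$ with $z$ not $\le$ some index kills $\cB^y_{z'}$ for the smaller $z'$ — is exactly what makes each $J_j$ a two-sided ideal. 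For the cell-ideal condition I would take $\Delta_{z_j}$ to be the left $R$-module $\bigoplus_x U^{xz_j}$ (free over $k$ of rank $\sum_x\rk\cU^x_{z_j}$, visibly finitely generated since $\cB$ is), and use the injectivity of the multiplication $U^{xz_j}\otimes U^{z_jy}\to R^{xy}$ proved in Proposition~\ref{prop:cellular} to get an $R$-bimodule isomorphism $J'_j\cong\Delta_{z_j}\otimes i(\Delta_{z_j})$; compatibility with $i$ is the commuting square in the definition, and it holds because $i$ swaps the two tensor factors by construction. This realizes $R$ as cellular with a cellular chain of length $n=|\cF|$, giving $\le|\cF|$ simple modules.

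The main obstacle — and the last step — is to show equality holds, i.e.\ that $R$ has exactly $|\cF|$ simple modules, so that Lemma~\ref{lemma:cellqh} applies and yields that $(R,\cF^{opp})$ is quasi-hereditary with the cell modules $\Delta_{z_j}$ as standard modules. The cleanest route is to exhibit $|\cF|$ pairwise non-isomorphic simple quotients: for each $x\in\cF$, the idempotent $\pi_x$ is primitive modulo the appropriate ideal (using $\cB^x_x=\cU^x_x\cong k\cdot 1_x$ from A1, which forces $\pi_x R\pi_x$ to have $1$-dimensional semisimple quotient over a field), and the top of $\Delta_{z_j}$, cut out by $\pi_{z_j}$, is a nonzero simple $L_{z_j}$; distinctness follows since $L_{z_j}$ is supported at $z_j$ for the partial order. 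Equivalently, one checks the cell bilinear form on each $\Delta_{z_j}$ is nonzero — it has the value $\la 1_{z_j},1_{z_j}\ra=1$ on the diagonal entry — which by the Graham–Lehrer theory guarantees a nonzero simple head for every index and hence the full count. Once the count is $|\cF|$, Lemma~\ref{lemma:cellqh} finishes both the quasi-heredity of $R$ (heredity chain the cellular chain in reverse, i.e.\ indexed by $\cF^{opp}$) and, by the symmetric argument, of $\Rc$ with poset $\cF$.
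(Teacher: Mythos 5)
Your proposal follows essentially the same route as the paper: the adjunction involution, the chain $J_j$ assembled from the decomposition of Proposition~\ref{prop:cellular}, the cell modules $\Delta_{z_j}=\bigoplus_{x\ge z_j}U^{xz_j}$, and then Lemma~\ref{lemma:cellqh}. Two remarks on the details. You can avoid the well-definedness worry for $i$ altogether: the paper simply defines $i(r)$ to be the adjoint of $r\in R\subset\End_k(\cB)$ with respect to $\la\,,\ra$; since the adjoint of left multiplication by $u$ is $u\ldiv(-)$, this swaps $U^{xy}$ and $U^{yx}$, so $i(R)=R$ and $i$ is automatically an involutive anti-automorphism, with no relations to check. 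For the count of simples, your argument that the cell form is nonzero because it takes the unit value $\la 1_{z_j},1_{z_j}\ra=1$ (so each cell module has a nonzero simple head by Graham--Lehrer) is a perfectly valid alternative to the paper's argument, which instead observes that each $\cB_y$ has $y$ as the minimal element of its support and deduces a distinct simple for each $y$; your version has the advantage of working uniformly in every residue characteristic.

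There is, however, one step missing. The theorem is asserted over the PID $k$, whereas Lemma~\ref{lemma:cellqh} and your primitive-idempotent/cell-form count are statements over a field. To conclude that $R$ itself is quasi-hereditary you must pass from the residue fields back to $k$: the paper does this by noting that the cell chain is free over $k$ and invoking the integral criterion of Cline--Parshall--Scott, which reduces quasi-heredity over $k$ to checking that the chain base-changed to each residue field $\mathbb{F}=k_{\mathfrak p}/\mathfrak p k_{\mathfrak p}$ is a heredity chain. Your fiberwise input is exactly what is needed there (the form value $1$ remains a unit in every $\mathbb{F}$), but as written your argument only establishes the statement when $k$ is a field; adding the freeness-plus-base-change reduction closes the gap.
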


Here $\cF^{opp}$ denotes the opposite poset of $\cF$.

\begin{proof}
As $R$ and $\Rc$ are defined symmetrically, it suffices to show the statement for $R$.  We proceed by first exhibiting a cellular chain for $R$.

For any $r \in R \subset \End(\cB)$, let $i(r) \in R$ be the adjoint of $r$.  Note that the involution $i$ is an antiautomorphism.

Recall the total ordering $z_1, \dots,z_n$ of $\cF$ from the proof of Proposition~\ref{prop:cellular}.  For each $j \in \cF$, let $J'_j := \bigoplus_{x,y \geq z_j} U^{xz_j}U^{z_j y}$.  Note that $J'_j$ is preserved by adjunction and by Proposition~\ref{prop:cellular}, the $J'_j$ intersect trivially and $R = J'_1 \oplus \dots \oplus J'_n$.  

Let $J_j := \bigoplus_{i \leq j} J'_i = \bigoplus_{i \leq j} \bigoplus_{z_i \leq x,y} U^{xz_i}U^{z_i y}$.  Note that it can be reinterpreted as $\{r \in R \mid r (\cB_{z_k}) =0, \forall k>j\}$, where $\cB_{z_k} = \bigoplus_x \cB^x_{z_k}$.  From this latter description and the invariance of $J_j$ under the involution $i$, it follows that $J_j$ is a two-sided ideal.  Let $\Delta_{z} := \bigoplus_{x \geq z} U^{xz}$.  By Lemma~\ref{lemma-A4}, $\Delta_j$ is a left ideal of $R/J_{j-1}$.  By definition $i(U^{xy}) = U^{yx}$ and there is a canonical isomorphism $J'_j = \Delta_{z_j} \otimes i(\Delta_{z_j})$ making the diagram in the definition of a cell ideal commute.  Thus $R$ is cellular.

Note that the cell chain is free over $k$, so by~\cite[Theorem 3.3]{CPSint}, to prove $R$ is quasi-hereditary it suffices to show that for any $\mathfrak{p} \in \mathrm{Spec}(k)$, the cellular chain $J_j \otimes \mathbb F$ is a heredity chain for $A \otimes \mathbb F$, where $\mathbb F$ is defined as $k_{\mathfrak p}/{\mathfrak p} k_{\mathfrak p}$.

By Lemma~\ref{lemma:cellqh}, it remains to show that for any residue field $\mathbb F$ of $k$, there is one simple object of $A \otimes \mathbb F$ for each idempotent $\pi_x$.  Define the support of an $R$-module $M$ to be the set of $x \in \cF$ for which $\pi_x M \ne 0$.  Then the support of $\cB_y = \bigoplus_x \cB^x_y$ contains $y$ and is contained in the set of $x$ with $x \ge y$.  It follows that for each $y$ there is a simple module so that $y$ is the minimal element in its support.
\end{proof}

\begin{corollary}\label{cor:standard module}
For any $x \in \cF$, the standard module $\Delta_x$ for $R$ is isomorphic to \[\bigoplus_{z \geq x} U^{zx} = R\cB_x^x.\]
\end{corollary}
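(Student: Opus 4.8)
The plan is to identify the standard module $\Delta_x$ explicitly from the cellular structure built in the previous theorem, and then to recognize it as $R\cB^x_x$. Recall from the proof that $R$ was shown to be cellular with cell chain indexed by the total order $z_1,\dots,z_n$, and the left ideal realizing the cell ideal $J'_j = J_j/J_{j-1}$ was exhibited as $\Delta_{z_j} = \bigoplus_{w \geq z_j} U^{w z_j} \subset R/J_{j-1}$. By Lemma~\ref{lemma:cellqh}, since $R$ is quasi-hereditary with the heredity chain given by the cellular chain (indexed in reverse), the standard modules for $R$ are precisely the cell modules. So the first step is just to read off: $\Delta_x \cong \bigoplus_{z \geq x} U^{zx}$, as a left $R$-module, where the right-hand side is viewed inside $R/J_{j-1}$ for the appropriate $j$ with $z_j = x$ — but the triangularity axiom (A1) ensures $U^{zx} \subset J_j$ already has zero image only from the $J_{j-1}$ part that is not present, so in fact $\bigoplus_{z\geq x} U^{zx}$ injects into $R/J_{j-1}$ and we may work with it directly as a submodule of $R$.

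Next I would identify $\bigoplus_{z \geq x} U^{zx}$ with $R\cB^x_x$. Since $\cB^x_x \cong \kk \cdot 1_x$, the module $R\cB^x_x$ is the cyclic $R$-submodule of $\cB$ generated by $1_x$; equivalently it is $R \pi_x(\cB) \cap$ (image of $1_x$), but more cleanly it is $R \cdot 1_x$ thought of inside $\cB$. The operators in $R$ applied to $1_x$: first, $\pi_z$ acts as projection onto $\cB^z$, so $\pi_z(1_x) = \delta_{zx} 1_x$; applying generators $U^{zy}$, i.e. maps $b\mapsto u\ast b$ for $u \in \cU^z_y$, to $1_x$ gives $u \ast 1_x = u \in \cU^z_x$ when $y = x$ and $0$ otherwise; applying $U^{yz}$, i.e. $b \mapsto u\ldiv b$, to $1_x$ gives $u \ldiv 1_x$, which by A1 (with $b_1 = b_2 = 1_x$ forced) is a multiple of $1_x$ and lands back in $\cB^x_x$. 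So one composition of generators starting from $1_x$ produces exactly $\cU^z_x$ for each $z \geq x$; iterating, Lemma~\ref{lemma-A4} (which gives $\cB\ldiv\cU\subset\cU$ and $\cU$ a subring) shows the $R$-span of $1_x$ inside $\cB$ is contained in $\bigoplus_{z\geq x}\cU^z_x$, and the single-step computation shows it contains each $\cU^z_x$. Hence $R\cB^x_x = R\cdot 1_x = \bigoplus_{z\geq x}\cU^z_x$.

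Finally I would match the two descriptions. The isomorphism $\cU^z_x \cong U^{zx}$ is the one already noted in the text (the map $u \mapsto (b\mapsto u\ast b)$ is an isomorphism since $u\ast 1_x = u$). Under the $R$-action, the identification $\bigoplus_{z\geq x} U^{zx} \cong \bigoplus_{z\geq x}\cU^z_x = R\cB^x_x$ is then $R$-equivariant: acting by $r\in R$ on the left of $U^{zx} \subset R$ (as a left ideal, mod $J_{j-1}$) corresponds under $U^{zx}\cong\cU^z_x$ to acting by $r$ on $\cU^z_x \subset \cB$, because in both cases one is just composing operators and then evaluating at $1_x$; the triangularity axiom handles the fact that lower terms vanish mod $J_{j-1}$. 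This gives the claimed isomorphism $\Delta_x \cong \bigoplus_{z\geq x} U^{zx} = R\cB^x_x$.

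The main obstacle is the bookkeeping in the second and third steps: being careful that the identification of $\Delta_x$ with a genuine submodule of $\cB$ (rather than a subquotient of $R$) is $R$-linear, and that nothing is lost when passing to $R/J_{j-1}$. Everything needed is already in hand — A1 for the triangular vanishing, Lemma~\ref{lemma-A4} for stability of $\cU$ under left contraction and multiplication, and Proposition~\ref{prop:cellular}/its corollary for the direct-sum decomposition of $R^{xy}$ — so this is a matter of assembling these pieces rather than proving anything new.
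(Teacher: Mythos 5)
Your proposal is correct and follows essentially the same route the paper intends: the corollary is read off from the proof of the quasi-heredity theorem, where $\Delta_{z_j}=\bigoplus_{x\geq z_j}U^{xz_j}$ is exhibited as the cell module and Lemma~\ref{lemma:cellqh} identifies cell modules with standard modules, together with the evident identification of $\bigoplus_{z\geq x}U^{zx}$ with $R\cdot 1_x=R\cB^x_x$ by evaluating operators at $1_x$ (using A1, Lemma~\ref{lemma-A4}, and the fact that $J_{j-1}$ kills $\cB_x$). Your bookkeeping about $R$-linearity and passing to $R/J_{j-1}$ is exactly the routine verification the paper leaves implicit.
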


\begin{theorem}
$\cB$ is tilting as an $R$-module and as an $\Rc$-module.
\end{theorem}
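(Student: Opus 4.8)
The plan is to show that $\cB$, as an $R$-module, has both a standard (cell) filtration and a costandard filtration; the symmetric argument then applies to the $\Rc$-module structure. By Corollary~\ref{cor:standard module}, the standard module $\Delta_x$ is $R\cB^x_x = \bigoplus_{z \ge x} U^{zx}$, so it is natural to compare this to the summand $\cB_x = \bigoplus_z \cB^z_x$ of $\cB$ (as $R$-module, via the left action). First I would observe that $\cB = \bigoplus_{x \in \cF} \cB_x$ as $R$-modules, since the right idempotents $\pic_x$ commute with the $R$-action and project onto the $\cB_x$. So it suffices to show each $\cB_x$ has a standard filtration and a costandard filtration as an $R$-module. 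Using the total order $z_1,\dots,z_n$ refining $\le$ from the proof of Proposition~\ref{prop:cellular}, I would filter $\cB_x$ by the $R$-submodules $R\cdot(\text{lower-order pieces})$; concretely, ordering so that larger elements come later, set $F_j \cB_x$ to be spanned by the images $\cB^{z_i}_x$ for $i \le j$ together with their $R$-translates. The key computation, which follows from the triangularity axiom A1 and the description of $R^{xy}$ in the Corollary, is that the subquotient $F_j\cB_x/F_{j-1}\cB_x$ is either zero (if $z_j \not\ge x$) or isomorphic to $\Delta_{z_j}$ (if $z_j \ge x$): indeed an element of $\cB^{z_j}_x$ generates a copy of $\Delta_{z_j} = \bigoplus_{w \ge z_j}U^{w z_j}$ under $R$, modulo pieces indexed by elements $\le z_j$ which have already been absorbed. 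This gives the standard filtration.

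For the costandard filtration I would use the perfect pairing $\langle,\rangle$ on $\cB$. The adjunction defining the involution $i$ on $R$ means that $\langle r\cdot b, b'\rangle = \langle b, i(r)\cdot b'\rangle$, so the pairing identifies $\cB$ with a direct sum of contragredient $R$-modules. Since $i$ fixes the cellular chain and swaps $U^{xy} \leftrightarrow U^{yx}$, the contragredient of $\Delta_x$ is the costandard module $\nabla_x$, and dualizing the standard filtration of $\cB_y$ (for a suitable $y$, or of the appropriate orthogonal-complement summand) produces a costandard filtration. The cleanest route is: the pairing restricted to $\cB^x \times \cB_x$-type pieces, combined with axiom A1 giving $\langle 1_x,1_x\rangle = 1$, shows that $\cB$ is self-dual up to the reindexing $x \mapsto x$ on $\cF$ (the pairing is "diagonal" with respect to the decomposition into $\cB^x_y$), hence a module with a standard filtration whose dual also has a standard filtration — i.e., it has both filtrations, so it is tilting.

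I expect the main obstacle to be verifying that the subquotients of the proposed filtration of $\cB_x$ are \emph{exactly} standard modules, not just modules admitting standard filtrations of their own. This requires carefully checking that the $R$-module generated by a single "top" piece $\cB^{z_j}_x$ inside $\cB_x/F_{j-1}$ has the precise structure $\bigoplus_{w \ge z_j} U^{w z_j}$ and no more — in particular that the map $\Delta_{z_j} \to F_j\cB_x/F_{j-1}\cB_x$ is injective, which is where saturatedness of $\cU$ and the injectivity statements in Proposition~\ref{prop:cellular} enter, and that it is surjective, which uses that every piece $\cB^{z_i}_x$ with $z_i$ incomparable to $z_j$ or below it has already been handled by the ordering. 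Once the standard filtration of each $\cB_x$ is nailed down, the costandard side is essentially formal via the pairing, and the tilting statement for $\Rc$ follows by the built-in $R \leftrightarrow \Rc$ symmetry of the construction.
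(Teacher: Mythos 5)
Your overall strategy --- decompose $\cB = \bigoplus_x \cB_x$, show each $\cB_x$ has a standard filtration indexed by the poset, and get the costandard filtration for free from self-duality of $\cB_x$ under the pairing (the actions of $U^{xy}$ and $U^{yx}$ being adjoint) --- is the same as the paper's. But the core step has two problems. First, the layers of the filtration are not single standard modules: the subquotient $(R\cB^{\le y}_x)/(R\cB^{< y}_x)$ is isomorphic to $\Delta_y \otimes (\cUc^y_x)^*$, i.e.\ to $\rk\,\cUc^y_x$ copies of $\Delta_y$ (in the matroid example this multiplicity is a M\"obius-function value, typically larger than $1$), because the top graded piece $\cB^y_x$ has rank well above one and only collapses to $(\cUc^y_x)^*$ modulo the lower terms. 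A layer that is a direct sum of standards would still give tiltingness, but your claim that each layer is ``isomorphic to $\Delta_{z_j}$'' shows the layer has not actually been computed.

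Second, and more seriously, the step you yourself flag as the ``main obstacle'' --- proving that the surjection from a sum of standards onto the layer is injective --- is exactly where the real content lies, and an appeal to saturatedness of $\cU$ and the injectivity statements of Proposition~\ref{prop:cellular} does not supply it. The paper's mechanism is different: for each $\check u \in \cUc^y_x$, the contraction $\cdot \rdiv \check u \colon R\cB^{\le y}_x \to \cB_y$ is an $R$-module map by axiom A3, annihilates $R\cB^{<y}_x$ and lands in $R\cB^y_y \cong \Delta_y$ by A1; assembling these over all $\check u$ gives a map $(R\cB^{\le y}_x)/(R\cB^{<y}_x) \to \Delta_y \otimes (\cUc^y_x)^*$ whose component at $y$ is an isomorphism by axiom A2, since $\cB^y_x\big/\sum_{z \ne y}\cU^y_z\cB^z_x \cong \cB^y_x/(\cUc^y_x)^\perp \cong (\cUc^y_x)^*$. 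Surjectivity then follows because the target is generated by its $y$-component, and injectivity because the source is generated by its $y$-component and hence is a quotient of $\Delta_y \otimes (\cUc^y_x)^*$. Without some substitute for these contraction maps --- and hence for axioms A2 and A3, the latter being precisely what makes $\rdiv\,\check u$ an $R$-map --- your filtration argument does not close. The self-duality half of your argument (pairing plus the involution $i$ giving the costandard filtration from the standard one) is fine and matches the paper.
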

\begin{proof}
It is enough to show that $\cB$ is tilting as an $R$-module, and since we have a decomposition 
$\cB = \bigoplus_{x\in \cF} \cB_x$ of $R$-modules, it is enough to show each $\cB_x$ is tilting.  

Since the actions of $U^{xy}$ and $U^{yx}$ are adjoint under the pairing, the $R$-module $\cB_x$ is self-dual.  It is therefore enough to show that it has a filtration by standard modules.  

Consider the filtration of $\cB_x$ by the submodules
$R\cB^{\le y}_x$ for all $y\ge x$, where $\cB^{\le y}_x := \bigoplus_{z \le y} \cB^z_x$.  
If $\check{u} \in \cUc^y_{x}$ the map
\[\cdot \rdiv \check{u}\colon R\cB^{\le y}_x \to
\cB_y\]
 is an $R$-module homomorphism by our associativity 
axiom A3, and by 
the triangularity axiom A1
it annihilates the submodule $R\cB^{<y}_x \subset R\cB^{\le y}_x$ and its image lies in
 $R\cB^y_y \cong \Delta_y$.  
 Putting these maps together over all $\check{u}$, we get
an $R$-module map 
\begin{equation} \label{standard layer}
(R\cB^{\le y}_x)/(R\cB^{<y}_x) \to \Delta_y \otimes (\cUc^y_x)^*. 
\end{equation}
Applying $\pi_y$ to both sides gives an isomorphism, 
since the left hand side is 
\[\cB^y_x\left/ \left(\sum_{z \ne y} \cU^y_z \cB^z_x\right)\right. \cong \cB^y_x/(\cUc^y_x)^\bot \cong (\cUc^y_x)^*,\]
by axiom A2.
It follows that this map is surjective.
On the other hand, 
since the left-hand side of \eqref{standard layer} is generated by its 
component at $y$, and the component at $z$ vanishes for all $z < y$,
the left-hand side is isomorphic to a quotient of $\Delta_y \otimes (\cUc^y_x)^*$.  It follows that \eqref{standard layer} is an isomorphism, 
completing the proof.
\end{proof}

\section{Ringel construction for matroids} \label{sec:matroid}

Let $M$ be a matroid with underlying set $I$ of $n$ elements and $\cI$ the collection of independent subsets.  We will fix once and for all an ordering\footnote{The choice of an ordering is not used in the following construction, but will be useful at various points in our proofs.} on the set $I$.
The other input we will need for our construction is the choice of a weight function $a : I \to k^*$.  In this section we associate a Ringel datum to any matroid endowed with a weight function.

\subsection{Matroids: notation and background}

Recall that a maximal independent set $B \in \cI$ is called a \textsl{basis} of $M$ and that all bases of $M$ have the same number of elements.  The \textsl{rank} $r=\rho(M)$ of $M$ is defined to be the size of a basis for $M$.  Let $\Bas(M)$ denote the set of all bases of $M$.

For any $X \subset I$, let $\cI|X$ be $\{E \subset X  \mid E \in \cI\}$.  The resulting collection of subsets of $X$ forms a matroid $M(X)$ called the \textsl{restriction} of $M$ to $X$.  The \textsl{rank} $\rho(X) = \rho(M(X))$ of $X$ is the size of any basis of $M(X)$.

The \textsl{closure} or span of a subset $X \subset I$ is defined as 
\[\overline{X} = \{x \in I \mid \rho(X \cup \{x\}) = \rho(X)\}.\]

A subset $X \subset I$ is called a \textsl{flat} if $X = \overline{X}$.  If $X$ and $Y$ are flats, then $X \cap Y$ is a flat, and so is $X \vee Y := \overline{X \cup Y}$.

An element $x \in I$ is called a \textsl{loop} of $M$ if the subset $\{x\}$ is dependent and is called a \textsl{coloop} of $M$ if $x$ is an element of every basis.

Let $\cF$ be the poset of coloop-free flats of $M$, i.e. flats $X$ such that $M(X)$ has no coloops.  The order on this poset is by \emph{inverse} inclusion; this non-standard choice comes from the relation of this construction with hyperplane arrangements and the topology of hypertoric varieties.  Let $\bm{1} \in \cF$ denote the unique maximal element of $\cF$ and $\bm{0} \in \cF$ denote the unique minimal element.  Note that $\bm{1}$ is equal to the flat consisting of all loops and $\bm{0}$ is the complement to the set of coloops.

We now recall the notion of matroid duality.  For any matroid $M$, let $\Bas^*(M)$ denote the collection of subsets $\{I \setminus B \subset I\mid  B \in \Bas(M)\}$.  The set $\Bas^*(M)$ is the set of bases for the \textsl{dual matroid} $M^*$, which is also defined on the underlying set $I$.

For $E \subset I$, the \textsl{contraction} $M/E$ of $M$ by $E$ is defined as $M/E = (M^*(I\setminus E))^*$.

We will also use two invariants associated to any flat $K$: Crapo's beta invariant $\beta(K)$ and the unsigned M\"obius function $\mu^+(K)$ (see, e.g.,~\cite[page 4]{BV} for definitions).

\subsection{The ring $\cB$}

Let $\Lambda(I) = \bigoplus_p \Lambda^p(I)$ be the exterior algebra with base ring $k$ over the set $I$, so 
$\Lambda^1(I)$ has the elements of $I$ as a basis.  For any subset $S = \{s_1, \dots,s_p\} \subset I$ ordered so that $s_i < s_{i+1}$, let $e_S = s_1 \wedge \dots \wedge s_p \in \Lambda(I)$.  The set of monomials $e_S$ over all subsets $S$ forms a basis of $\Lambda(I)$.

Let $\Lambda^p_q(M) \subset \Lambda^p(I)$ be the free submodule generated by monomials $e_S$ such that $S\subset I$ is of rank $q$ in $M$.

\begin{definition} Let $\cB(M) := \Lambda^r_r(M)$.  For any $E, F \in \cF$ such that $E\geq F$, let $\cB^E_F = \cB(M(F)/E)$.  For $E \not\geq F$, we let $\cB^E_F =0$.

If $E \geq F \geq G$, then we define multiplication $\ast: \cB^E_F \otimes \cB^F_G \to \cB^E_G$ by the wedge product.  This makes sense because the union of a basis of $M(F)/E$ and a basis of $M(G)/F$ is a basis of $M(G)/E$.
\end{definition}

Lastly, we endow $\Lambda(I)$ with the symmetric perfect pairing $\langle,\rangle$ defined as follows on the monomial basis:
\[
\la e_S, e_T \ra = \begin{cases}
    \prod_{s \in S} a(s)^{-1} & \text{if $S=T$}, \\
    0 & \text{otherwise}.
  \end{cases}
\]
In particular, this restricts to a symmetric perfect pairing on $\cB$.

To ease notation, we will let $a(S) = \prod_{s \in S} a(s)$.

Similarly to the general framework, we will consider adjoint operations to the wedge product in the exterior algebra.  Let $\lcon$ and $\rcon$ denote the left and right adjoints.  Note that if $b \in \cB^G_F$ and $b' \in \cB^G_E$, then $b \lcon b' = b \ldiv b' \in \cB^F_E$.

We observe that the wedge product and contraction are compatible with the bracket in the following ways.

\begin{lemma}
Fix any subset $S \subset I$.  Suppose $x,x' \in \Lambda(S)$ and $y,y' \in \Lambda(I \setminus S)$.  Then
\[\la x \wedge y , x' \wedge y' \ra = \la x,x' \ra \la y,y'\ra.\]
\end{lemma}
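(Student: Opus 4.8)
The statement is a multiplicativity property of the pairing $\langle,\rangle$ on $\Lambda(I)$ with respect to the decomposition $\Lambda(I) \cong \Lambda(S) \otimes \Lambda(I\setminus S)$. Since the pairing is defined on the monomial basis, and both sides of the claimed identity are bilinear in each of $x, x', y, y'$, it suffices to check the identity when $x = e_A$, $x' = e_{A'}$ (with $A, A' \subset S$) and $y = e_B$, $y' = e_{B'}$ (with $B, B' \subset I\setminus S$).

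\begin{proof}
By bilinearity of both sides in each of the four arguments, it suffices to verify the identity when $x = e_A$, $x' = e_{A'}$, $y = e_B$, $y' = e_{B'}$ for subsets $A, A' \subset S$ and $B, B' \subset I\setminus S$. Since $A \cap B = A' \cap B' = \emptyset$, we have $e_A \wedge e_B = \pm\, e_{A\cup B}$ and $e_{A'}\wedge e_{B'} = \pm\, e_{A'\cup B'}$, where the signs depend only on the chosen ordering of $I$. If $A \ne A'$ or $B \ne B'$, then $A \cup B \ne A' \cup B'$ (because we can recover $A = (A\cup B)\cap S$ and $B = (A\cup B)\setminus S$), so both sides vanish. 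If $A = A'$ and $B = B'$, then the two signs coincide, so $e_A \wedge e_B$ and $e_{A'}\wedge e_{B'}$ are equal, and the left-hand side is $\langle e_{A\cup B}, e_{A\cup B}\rangle = a(A\cup B)^{-1} = a(A)^{-1}a(B)^{-1}$ since $A$ and $B$ are disjoint. The right-hand side is $\langle e_A, e_A\rangle\langle e_B, e_B\rangle = a(A)^{-1}a(B)^{-1}$ as well. This completes the proof.
\end{proof}

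The only point requiring any care is the observation that $A \ne A'$ or $B \ne B'$ forces $A \cup B \ne A' \cup B'$, which is immediate from the fact that $S$ and $I\setminus S$ partition $I$; and that the two reordering signs agree when $A = A'$, $B = B'$, which holds because the sign is a function of the underlying set together with the fixed ordering of $I$ only. So there is no real obstacle here — the statement is essentially the compatibility of the pairing with the tensor decomposition $\Lambda(I) = \Lambda(S)\otimes\Lambda(I\setminus S)$, reduced to a one-line check on monomials.
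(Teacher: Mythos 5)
Your proof is correct and follows the same route as the paper's: reduce to monomials by bilinearity and verify directly from the definition of the pairing; you simply spell out the sign and disjointness details that the paper leaves implicit.
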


\begin{proof}
It suffices to check when $x,x',y,y'$ are all monomials.  In this case it follows directly from the definition of the pairing.
\end{proof}

\begin{corollary}\label{cor-distr}
Fix any subset $S \subset I$.  Suppose $x,x' \in \Lambda(S)$ and $y,y' \in \Lambda(I \setminus S)$ and that each is of homogeneous degree.  Then
\[ (x \wedge y) \lcon (x' \wedge y') = (-1)^{(|x|-|x'|)|y|} (x \lcon x') \wedge (y\lcon y'). \]
\end{corollary}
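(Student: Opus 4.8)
The plan is to reduce to the monomial case, just as in the preceding lemma. Since contraction $\lcon$ and the wedge product are $k$-bilinear, and since the sign $(-1)^{(|x|-|x'|)|y|}$ depends only on the homogeneous degrees (which are fixed), it suffices to verify the identity when $x, x', y, y'$ are all monomials $e_A, e_{A'}$ (with $A, A' \subset S$) and $e_C, e_{C'}$ (with $C, C' \subset I \setminus S$). If the degrees don't match up appropriately—i.e. if $|A'| \not\le |A|$ or $|C'| \not\le |C|$, or more precisely if $A' \not\subset A$ or $C' \not\subset C$ in the relevant sense—then by the defining adjunction both sides will be forced to vanish, using the previous lemma to split the pairing $\langle e_A \wedge e_C, \cdot\rangle$ as a product. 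So the only case requiring real computation is $A' \subset A$, $C' \subset C$.

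For that case, I would unwind the definition of $\lcon$ via its defining property $\langle u \lcon v, w\rangle = \langle v, u \wedge w\rangle$. Writing $e_A \lcon e_{A'}$ against a test monomial $e_P$ with $P \subset S$, and $e_C \lcon e_{C'}$ against $e_Q$ with $Q \subset I \setminus S$, I compute $\langle (e_A \wedge e_C)\lcon(e_{A'}\wedge e_{C'}), e_P \wedge e_Q\rangle = \langle e_{A'} \wedge e_{C'}, (e_A \wedge e_C) \wedge (e_P \wedge e_Q)\rangle$. Now I reorder $(e_A \wedge e_C)\wedge(e_P\wedge e_Q)$ into an $S$-part wedge an $(I\setminus S)$-part: moving $e_C$ (degree $|C|$) past $e_P$ (degree $|P|$) produces a sign $(-1)^{|C||P|}$, giving $(-1)^{|C||P|}(e_A\wedge e_P)\wedge(e_C\wedge e_Q)$. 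Applying the previous lemma, this equals $(-1)^{|C||P|}\langle e_{A'}, e_A\wedge e_P\rangle\langle e_{C'}, e_C\wedge e_Q\rangle = (-1)^{|C||P|}\langle e_A\lcon e_{A'}, e_P\rangle\langle e_C\lcon e_{C'}, e_Q\rangle$. On the other side, $\langle (e_A\lcon e_{A'})\wedge(e_C\lcon e_{C'}), e_P\wedge e_Q\rangle$: here $e_A\lcon e_{A'}$ has degree $|A|-|A'|$ and $e_C\lcon e_{C'}$ has degree $|C|-|C'|$, and splitting via the lemma requires no reordering sign since the first factor already lies in $\Lambda(S)$ and the second in $\Lambda(I\setminus S)$—it equals $\langle e_A\lcon e_{A'}, e_P\rangle\langle e_C\lcon e_{C'}, e_Q\rangle$ directly. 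Since the pairing is perfect, comparing the two expressions over all test monomials $e_P\wedge e_Q$ forces the desired identity up to the sign $(-1)^{|C||P|}$; but $e_C\lcon e_{C'}$ and hence the nonvanishing of $\langle e_C\lcon e_{C'}, e_Q\rangle$ forces $|Q| = |C|-|C'|$, so effectively I should instead pair against $e_P$ with $|P| = |A| - |A'|$, at which point $(-1)^{|C||P|} = (-1)^{|C|(|A|-|A'|)} = (-1)^{(|x|-|x'|)|y|}$, matching the claimed sign.

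The main obstacle is bookkeeping the Koszul signs correctly: the sign in the statement is $(-1)^{(|x|-|x'|)|y|}$, and one must be careful that the reordering sign picked up when commuting $e_C$ past the test vector's $S$-component is evaluated at the correct degree $|P| = |x| - |x'|$ (which is what actually contributes, since the pairing vanishes otherwise) rather than at $|P| = |x|$ or some other value. Once this is pinned down, everything reduces to the previous lemma and the defining adjunction, so I do not expect any genuine difficulty beyond sign-tracking, and I would present the computation compactly by testing against monomials $e_P \wedge e_Q$ and invoking perfectness of the pairing at the end.
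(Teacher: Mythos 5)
Your proposal is correct and follows essentially the same route as the paper: pair against split test monomials $e_P\wedge e_Q$, use the adjunction defining $\lcon$ and the preceding lemma to factor the pairing, and observe that the degree constraint from the nonvanishing factor pins down the Koszul sign as $(-1)^{(|x|-|x'|)|y|}$. The only (harmless) slip is that the nonzero pairings actually force $|P|=|A'|-|A|$ and $|Q|=|C'|-|C|$ rather than the reversed differences you wrote, but since only parity enters the sign this does not affect the conclusion.
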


\begin{proof}
For every monomial $e_a \in \Lambda(S)$ and $e_b \in \Lambda(I\setminus S)$ consider the pairing:
\[ 
\begin{split}
\la (x \wedge y) \lcon (x' \wedge y'), e_a \wedge e_b \ra & =\la x' \wedge y', x \wedge y \wedge e_a \wedge e_b \ra \\
& = \la x' \wedge y', (-1)^{|e_a||y|} (x \wedge e_a) \wedge (y \wedge e_b) \ra \\
& = (-1)^{|e_a||y|} \la x', x \wedge e_a\ra \la y', y \wedge e_b) \ra \\
& = (-1)^{|e_a||y|} \la x\lcon x', e_a\ra \la y \lcon y', e_b) \ra 
\end{split}
\]
Lastly, notice that $\la x\lcon x', e_a\ra =0$ unless $|e_a|=|x'|-|x|$.
\end{proof}

\subsection{The subspaces $\cU$ and $\cUc$}
We now define the $k$-submodules $\cU(M), \cUc(M) \subset \cB(M)$ and $\cU^E_F, \cUc^E_F \subset \cB^E_F$ for any $E,F \in \cF$.

Our definition of $\cU(M)$ and $\cUc(M)$ is based on the following bicomplex structure on the exterior algebra $\Lambda(M)$ introduced by Denham~\cite{Denham}.  Let $\partial:\Lambda(I) \to \Lambda(I)$ be the standard differential on the exterior algebra, defined on monomials $e_S \in \Lambda^p(I)$ by
\[ \partial(e_S) = \sum_{i =1}^p (-1)^i s_1 \wedge \dots \wedge \hat{s_i} \wedge \dots \wedge s_p.\]
Note that $\partial$ can be expressed as a sum of two differentials $\partial_h$ and $\partial_v$ where $\partial_h: \Lambda^q_p(M) \to \Lambda^{q-1}_{p-1}(M)$ and  $\partial_v: \Lambda^q_p(M) \to \Lambda^{q-1}_{p}(M)$ are obtained by the composition of $\partial:  \Lambda^q_p(M) \to \Lambda^{q-1}_{p-1}(M) \oplus \Lambda^{q-1}_{p}(M)$ with the two projections.

Let $\delta,\delta_h,$ and $\delta_v$ be the adjoints with respect to $\langle,\rangle$ of $\partial,\partial_h,$ and $\partial_v$, respectively.  Note that the restriction of $\delta$ to $\Lambda^r_r(M)$ is equal to the restriction of $\delta_v$ and similarly for $\partial$ and $\partial_h$.  Observe as well that for any $e \in \Lambda(I)$, $\delta$ is given by:
\[ \delta(e) = \sum_{s \in I} a(s) s \wedge e.\]

\begin{definition}
Let $\cU(M) = \Ker (\partial: \Lambda^r_r(M) \to \Lambda^{r-1}_{r-1}(M))$ and $\cUc(M) = \Ker(\delta: \Lambda^r_r(M) \to \Lambda^{r+1}_r(M)).$

For any $E,F \in \cF$ such that $E \geq F$, let $\cU^E_F = \cU(M(F)/E)$ and $\cUc^E_F = \cUc(M(F)/E)$.
\end{definition}

These subspaces are interchanged by matroid duality in the following sense.

\begin{definition}
Let $\D : \Lambda(I) \to \Lambda(I)$ be the linear map $v \mapsto v \lcon e_I$.
\end{definition}

For any $S,T \subset I$, let $\epsilon(S,T)$ be $(-1)^k$, where $k$ is the number of pairs $(s,t) \in S \times T$ such that $s > t$.

\begin{lemma}
The map $\D$ is an isomorphism that takes $\Lambda^k(I)$ to $\Lambda^{n-k}(I)$ and $\cB(M) =\Lambda^r_r(M)$ to $\cB(M^*) = \Lambda^{n-r}_{n-r}(M^*)$.
\end{lemma}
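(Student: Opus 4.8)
The plan is to verify the three claimed properties of $\D$ in turn: that it is an isomorphism reversing degree, that it carries the $M$-rank condition to the $M^*$-rank condition, and hence that it restricts to an isomorphism $\cB(M) \to \cB(M^*)$. For the first part, I would argue on the monomial basis: since $\D(e_S) = e_S \lcon e_I$, an easy unwinding of the definition of $\lcon$ together with the formula $\la e_S, e_T\ra = a(S)^{-1}\delta_{S,T}$ shows that $\D(e_S) = \pm\, a(S)^{-1}\, e_{I\setminus S}$, where the sign is $\epsilon(S, I\setminus S)$ up to a global convention. Indeed, $\la \D(e_S), e_T\ra = \la e_I, e_S \wedge e_T\ra$, which vanishes unless $T = I\setminus S$, in which case $e_S \wedge e_T = \pm e_I$. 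This makes it transparent that $\D$ sends $\Lambda^k(I)$ into $\Lambda^{n-k}(I)$ and that $\D$ is invertible (it is, up to invertible scalars and signs, the bijection $S \mapsto I\setminus S$ on basis monomials). So the first two sentences of the lemma are essentially bookkeeping once the pairing-adjointness characterization of $\D$ is in hand.

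The crux is the rank statement. I need: for a subset $S \subset I$ with $|S| = r = \rho(M)$, the set $S$ has rank $r$ in $M$ (i.e.\ $S$ is a basis of $M$) if and only if $I\setminus S$ has rank $n - r = \rho(M^*)$ in $M^*$ (i.e.\ $I\setminus S$ is a basis of $M^*$). This is precisely the defining property of the dual matroid recalled earlier in the paper, $\Bas^*(M) = \{I\setminus B \mid B \in \Bas(M)\}$: complements of bases of $M$ are exactly the bases of $M^*$, and both have the complementary cardinality $n - r$. Combined with the degree-reversal from the first part — $\D(e_S)$ is (a scalar times) $e_{I\setminus S}$, and $|I\setminus S| = n - r$ — this shows $\D$ maps the spanning set $\{e_S : S \in \Bas(M)\}$ of $\cB(M) = \Lambda^r_r(M)$ bijectively (up to units) onto the spanning set $\{e_{I\setminus S} : S\in\Bas(M)\} = \{e_B : B \in \Bas(M^*)\}$ of $\cB(M^*) = \Lambda^{n-r}_{n-r}(M^*)$. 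Since $\D$ is a bijection on all of $\Lambda(I)$ carrying one free-basis set onto the other, its restriction is an isomorphism $\cB(M) \xrightarrow{\sim} \cB(M^*)$.

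I do not expect any genuine obstacle here; the only mild subtlety is keeping the signs $\epsilon(S, I\setminus S)$ straight and confirming that $\D$ is exactly bijective rather than merely injective — both handled by the observation that $\D$ permutes (with invertible-scalar coefficients) the monomial basis $\{e_S\}_{S\subset I}$ of $\Lambda(I)$ via $S \mapsto I\setminus S$, which is an involution up to sign. The role of the weight function $a$ is inert: it only rescales each basis vector by the unit $a(S)^{-1} \in k^*$, so it affects none of the structural claims. The one external input is the combinatorial fact that $B \mapsto I \setminus B$ is a bijection $\Bas(M) \to \Bas(M^*)$, which is the very definition of $M^*$ used in the paper.
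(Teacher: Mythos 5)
Your proposal is correct and follows essentially the same route as the paper: computing $\D$ on the monomial basis via the pairing to obtain $\D(e_S)=\epsilon(S,I\setminus S)\,a(S)^{-1}e_{I\setminus S}$, and then using that complements of bases of $M$ are precisely the bases of $M^*$. The only cosmetic difference is that the paper certifies invertibility by computing $\D^2(e_S)=(-1)^{k(n-k)}a(I)^{-1}e_S$, while you observe directly that $\D$ permutes the monomial basis up to unit scalars; both are fine.
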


\begin{proof}
We simply compute $\D$ on the monomial generators.  Suppose $S \subset I$ and $e_S \in \Lambda^k(I)$.  We observe 
\[ \la \D(e_S),e_T \ra = \la e_I , e_S \wedge e_T \ra = \begin{cases}
     \epsilon(S,I\setminus S) a(I)^{-1} & \text{if $T=I \setminus S$}, \\
     0 & \text{otherwise}
\end{cases}
\]
Hence $\D(e_S) = \epsilon(S,I \setminus S) a(S)^{-1} e_{I\setminus S} \in \Lambda^{n-k}(M)$.  Repeating this argument we find that 
\[ \D^2(e_S) = \epsilon(S,I\setminus S) a(S)^{-1} \D(e_{I\setminus S}) =  (-1)^{k(n-k)} a(I)^{-1} e_S.\]
\end{proof}

\begin{lemma}
\label{lem:matdual}
The duality map $\D$ enjoys the following properties:
\begin{enumerate} 
\item For any $v \in \Lambda^k(M)$, 
\[\delta (\D(v)) = (-1)^{k+1} \D (\partial(v)) ~\mathrm{and}~ \partial (\D(v)) = (-1)^{k} \D (\delta(v)),\]
\item $\D$ maps $\cU(M)$ (resp. $\cUc(M)$) isomorphically to $\cUc(M^*)$ (resp. $\cU(M^*)$),
\item The adjoint of $\D|_{\cB(M)}$ is $(-1)^{r(n-r)}\D|_{\cB(M^*)}$,
\item For any flat $K$ of $M$ of rank $r'$, $x\in \cB(M(K))$ and $y \in \cB(M/K)$,
\[ \D(x \wedge y) = (-1)^{r'(r-r')} \epsilon(K, I \setminus K) \D^K(y) \wedge \D_K(x),\]
where $\D^K$ (resp. $\D_K$) denotes the duality map for the matroid $M/K$ (resp. $M(K)$).
\end{enumerate}
\end{lemma}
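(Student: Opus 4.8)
The plan is to prove the four parts in order, leveraging the explicit formula $\D(e_S) = \epsilon(S, I\setminus S)\, a(S)^{-1} e_{I\setminus S}$ from the preceding lemma, plus the two descriptions $\partial(e) = \sum_i (-1)^i \cdots$ and $\delta(e) = \sum_{s\in I} a(s)\, s\wedge e$. For part (1), I would work on monomials $e_S \in \Lambda^k(I)$ directly. The key observation is that $\partial$ is (up to sign and weights) contraction against $\sum_s s$, while $\delta$ is wedging with $\sum_s a(s)\, s$; since $\D$ is itself a contraction against $e_I$, the identities $\delta \D = \pm \D \partial$ and $\partial \D = \pm \D \delta$ should reduce to a bookkeeping identity expressing that "contracting then contracting more" versus "contracting the complement then wedging back" agree up to a computable Koszul sign. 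Concretely I would expand both sides on $e_S$ using the monomial formula for $\D$, track the sign changes $\epsilon(S', I\setminus S')$ as one adds or removes a single element, and verify the claimed $(-1)^{k+1}$ and $(-1)^k$. The weight factors $a(s)$ cancel correctly because $\delta$ carries an $a(s)$ exactly where $\D$ on the smaller monomial has lost the corresponding $a(s)^{-1}$.

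Part (2) is then almost immediate from part (1): $\D$ maps $\Ker(\partial|_{\Lambda^r_r(M)})$ to $\Ker(\delta|_{\Lambda^{n-r}_{n-r}(M^*)})$ and vice versa, provided $\D$ intertwines the relevant graded pieces — but that is exactly the content of the previous lemma ($\D$ sends $\Lambda^r_r(M)$ to $\Lambda^{n-r}_{n-r}(M^*)$), together with the fact that $\D$ sends $\Lambda^{r-1}_{r-1}(M)$ to $\Lambda^{n-r+1}_{n-r}(M^*)$ and $\Lambda^{r+1}_r(M)$ to $\Lambda^{n-r-1}_{n-r}(M^*)$. One subtlety to check: the differential appearing in the definition of $\cU(M^*)$ and $\cUc(M^*)$ uses the \emph{same} weight function $a$ restricted appropriately, and the formula $\delta(e) = \sum a(s)\, s\wedge e$ is weight-function-dependent, so I must confirm that under $\D$ the operator computing $\cU(M^*)$ really matches $\pm\D\partial_M$. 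Since $\partial$ is weight-independent and $\delta$ for $M^*$ uses the restriction of $a$, part (1) already encodes this compatibility, so nothing extra is needed.

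Part (3) is a short adjointness computation: for $x \in \cB(M)$, $y\in \cB(M^*)$, write $\la \D x, y\ra$ using $\D x = x\lcon e_I$, so $\la \D x, y\ra = \la e_I, x\wedge y\ra$, which is symmetric in the pair up to the sign coming from commuting $x$ past $y$ in $\Lambda(I)$ — this gives $(-1)^{r(n-r)}$ — and then rewriting $\la e_I, y\wedge x\ra = \la \D y, x\ra$ identifies the adjoint of $\D|_{\cB(M)}$ with $(-1)^{r(n-r)}\D|_{\cB(M^*)}$. Part (4) is the most computational: for a flat $K$ of rank $r'$, I would take $x = e_A$ a basis monomial of $M(K)$ (so $A\subset K$, $|A|=r'$) and $y = e_B$ a basis monomial of $M/K$ (so $B\subset I\setminus K$, $|B| = r - r'$), compute $\D(e_A\wedge e_B) = \epsilon(A\cup B, I\setminus(A\cup B))\, a(A\cup B)^{-1} e_{I\setminus(A\cup B)}$, and on the other side compute $\D^K(e_B)\wedge \D_K(e_A)$, where $\D_K$ is the duality map for $M(K)$ on the ground set $K$ and $\D^K$ that for $M/K$ on the ground set $I\setminus K$. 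Both sides are scalar multiples of $e_{I\setminus(A\cup B)}$, and the proof amounts to matching the three sign contributions $\epsilon$ on the three ground sets $I$, $K$, $I\setminus K$ and the global factor $(-1)^{r'(r-r')}$, while the weight factors $a(A\cup B)^{-1}$ split as $a(A)^{-1}a(B)^{-1}$ and match automatically. The main obstacle is precisely this sign bookkeeping in part (4): one must carefully decompose $\epsilon(A\cup B, I\setminus(A\cup B))$ into $\epsilon$ on $K$, $\epsilon$ on $I\setminus K$, and the cross-term pairing $K$ against $I\setminus K$ (which contributes the $\epsilon(K, I\setminus K)$ factor and interacts with the $(-1)^{r'(r-r')}$ coming from the Koszul sign of swapping $\D^K(y)$ past $\D_K(x)$). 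I expect this to be a short but error-prone computation best organized by first treating the case $K = \{1,\dots,r'\}$ (so $\epsilon(K, I\setminus K) = 1$) and then reducing the general case to it via a permutation argument.
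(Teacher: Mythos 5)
Your proposal is sound, and for parts (2) and (3) it coincides with the paper's argument (part (3) is literally the same three-line adjunction computation; part (2) is the same ``$\delta\D v=0\iff\D\partial v=0\iff\partial v=0$'' deduction, using that $\D$ is injective by the $\D^2$ formula). Where you genuinely diverge is in parts (1) and (4), which you propose to verify by expanding both sides on monomials $e_S$, respectively $e_A\wedge e_B$, and matching the $\epsilon$-signs and weights by hand. The paper avoids all monomial bookkeeping: for (1) it pairs $\delta(\D v)$ against a test element $w$, writes $\la\delta(\D v),w\ra=\la e_I, v\wedge\partial w\ra$, and then uses the Leibniz rule together with the vanishing of $\partial(v\wedge w)$ (degree $n+1$) to move $\partial$ onto $v$ at the cost of exactly the sign $(-1)^{k+1}$; the second identity then follows either by the symmetric argument or by conjugating the first by $\D$. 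For (4) it writes $e_I=\epsilon(K,I\setminus K)\,e_K\wedge e_{I\setminus K}$ and applies Corollary~\ref{cor-distr} once, so the sign decomposition you describe as the ``main obstacle'' is done once and for all by that corollary rather than redone on monomials. Your route is more elementary and self-contained but more error-prone, precisely in the places you flag: in particular, the suggested reduction of part (4) to $K=\{1,\dots,r'\}$ by a permutation argument is delicate, since relabelling the ground set changes the ordering that defines $\epsilon(S,T)$ and hence the maps $\D$, $\D_K$, $\D^K$ themselves, so you would still have to track exactly the cross-term signs you were trying to avoid; the direct decomposition of $\epsilon(A\cup B, I\setminus(A\cup B))$ (or simply invoking Corollary~\ref{cor-distr}, which you already have available) is the cleaner path. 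One small clarification for part (2): there is no ``restriction'' of the weight function to worry about, since $M$ and $M^*$ share the ground set $I$ and $\partial,\delta$ are the same operators on $\Lambda(I)$; all that is needed beyond part (1) is that $\D$ carries $\cB(M)$ to $\cB(M^*)$ (the previous lemma) and is injective.
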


\begin{proof}

For the first statement of part (1), we wish to show that for $v \in \Lambda^r$, $\delta(\D(v)) = (-1)^{r+1} \D(\partial(v))$.  We do so by considering the pairing with any $w \in \Lambda^{n-r+1}$.  By adjunctions we have:
\[ \la \delta(\D(v)), w \ra = \la v \lcon e_I, \partial w \ra = \la e_I, v \wedge \partial w \ra. \]
As $\partial$ is a derivation, $\partial(v \wedge w) = \partial(v) \wedge w + (-1)^r v \wedge \partial(w)$.  But $v\wedge w$ has degree $n+1$ and hence vanishes.  We conclude that the above quantity is equal to
\[\la e_I, (-1)^{r+1}\partial(v) \wedge w \ra = \la (-1)^{r+1}\D(\partial v), w \ra.\]

A similar argument yields the second statement.  Alternatively, the second statement can also be obtained from the first by pre- and post-composing with the duality operator and using the previous lemma.

For part (2), note that by part (1) and the previous lemma, $\delta(\D(v)) = 0 $ if and only if $\D(\partial(v)) =0$ if and only if $\partial(v)=0$.  Similarly we find $\partial(\D(v))=0$ if and only if $\D(\delta(v))=0$ and hence if and only if $\delta(v)=0$.

Part (3) follows easily from the definitions:
\[ \la \D v, w \ra = \la e_I, v \wedge w \ra = (-1)^{r(n-r)}\la w \wedge v , e_I \ra = (-1)^{r(n-r)}\la v , \D w \ra. \]

Lastly, we use Corollary~\ref{cor-distr} to prove part (4).  We have $e_I = \epsilon(K,I\setminus K) e_K \wedge e_{I \setminus K}$, thus
\[ 
\begin{split}
\D( x\wedge y) & = \epsilon(K, I \setminus K) (x \wedge y) \lcon (e_K \wedge e_{I \setminus K}) \\
&  = \epsilon(K, I \setminus K) \D_K(x) \wedge \D^K(y) \\
& = (-1)^{r'(r-r')} \epsilon(K, I \setminus K) \D^K(y) \wedge \D_K(x).
\end{split}
\]
\end{proof}

\subsection{Dimension formulas for $\cU$ and $\cUc$}

\begin{lemma}
\label{lem:dim}
 $\cU(M)$ and  $\cUc(M)$ are free, saturated $k$-submodules of $\cB(M)$ and respectively of ranks $ \mu^+(M^*)$ and $\mu^+(M)$.
\end{lemma}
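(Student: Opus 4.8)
The plan is to prove both statements in parallel using the duality map $\D$ of Lemma~\ref{lem:matdual}, which reduces one case to the other, together with an Euler-characteristic count on the Denham bicomplex. First, note that $\cU(M) = \Ker(\partial : \Lambda^r_r(M) \to \Lambda^{r-1}_{r-1}(M))$ is the kernel of a $k$-module homomorphism between free $k$-modules. Since $\cB(M) = \Lambda^r_r(M)$ is free and finitely generated over the PID $k$, its submodule $\cU(M)$ is automatically free; it is saturated precisely because it is a kernel (if $cv \in \cU(M)$ for some nonzero $c \in k$, then $c\,\partial v = 0$ in the torsion-free module $\Lambda^{r-1}_{r-1}(M)$, so $\partial v = 0$). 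The same argument applies verbatim to $\cUc(M) = \Ker(\delta)$. So the content of the lemma is entirely the rank computation, and by Lemma~\ref{lem:matdual}(2) the map $\D$ carries $\cU(M)$ isomorphically onto $\cUc(M^*)$, so it suffices to compute $\rank \cUc(M) = \mu^+(M)$; the claim $\rank \cU(M) = \mu^+(M^*)$ then follows by applying this to $M^*$ and using $M^{**} = M$.

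To compute $\rank \cUc(M) = \rank \Ker(\delta : \Lambda^r_r(M) \to \Lambda^{r+1}_r(M))$, I would work with the horizontal differential $\partial_h$ of the Denham bicomplex, whose adjoint restricted to $\Lambda^r_r(M)$ is $\delta$. The key is that for a fixed rank $q$, the complex $(\Lambda^\bullet_q(M), \partial_h)$ decomposes as a direct sum over flats $F$ with $\rho(F) = q$ of the exterior-algebra-type complexes supported on the closed sets, and its homology is governed by the reduced homology of the independence complex / broken-circuit complex, i.e.\ by the Orlik–Solomon algebra or equivalently by Möbius-function data. Concretely, one knows (this is essentially Denham's computation, or a Whitney-type / finite-field argument) that the alternating sum of ranks of $\Lambda^p_q(M)$ over $p$, with the bicomplex taken into account, collapses to an expression in $\mu^+$. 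I would set up the Euler characteristic of the row $\Lambda^\bullet_r(M)$ under $\partial_v$ versus the column under $\partial_h$ and extract $\rank\Ker\delta$ at the corner $(r,r)$ by comparing with the known total homology; the combinatorial identity that makes this work is the standard one expressing $\mu^+(M)$ as an alternating sum of the numbers of independent sets of each rank, or rather its refinement by flats.

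The main obstacle I expect is the precise bookkeeping of the bicomplex: identifying exactly which homology group of $(\Lambda^r_\bullet(M), \partial)$ or $(\Lambda^\bullet_\bullet(M), \partial_h, \partial_v)$ computes $\rank\cUc(M)$ and matching it to $\mu^+(M)$ rather than to some shifted or signed variant. The cleanest route is probably to cite Denham~\cite{Denham} for the statement that the homology of the relevant line of the bicomplex has rank $\mu^+$ (together with a vanishing statement for the other homology, so that the kernel at the corner has the asserted rank), or alternatively to reduce to the Orlik–Solomon algebra whose top component has dimension $|\mu(M)| = \mu^+$ by a theorem of Orlik–Solomon, and then verify the identification of $\cUc(M)$ with that top component. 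Either way, the freeness and saturatedness are immediate, the duality reduces two claims to one, and the remaining rank count is a citation-plus-identification rather than a new combinatorial theorem.
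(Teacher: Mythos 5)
The easy parts of your argument (freeness of a submodule of a finitely generated free module over a PID, saturation of kernels of maps into torsion-free modules, and the reduction of one rank count to the other via $\D$ and Lemma~\ref{lem:matdual}(2)) are correct and match the paper. But the heart of the lemma --- the actual rank computation --- is not carried out, and your primary plan for it does not work as stated. First, a bookkeeping error: on the corner $\Lambda^r_r(M)$ the map $\delta$ agrees with $\delta_v$, the adjoint of $\partial_v$, not with the adjoint of $\partial_h$; so the column of the bicomplex you propose to analyze is not the one computing $\Ker\delta$. More fundamentally, an Euler-characteristic comparison of rows and columns cannot by itself isolate the rank of a kernel at one corner; you would need exactness/vanishing statements along the relevant line, and those vanishing statements are precisely the nontrivial content. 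Your fallback of citing Denham is also delicate: his Laplacian spectral decomposition is an argument over $\Q$ (or with positive weights), whereas the lemma is asserted over an arbitrary PID $k$ with arbitrary unit weights $a\colon I\to k^*$; note also that you chose to compute the rank of $\cUc(M)=\Ker\delta$, the weight-dependent side, which is the harder direction.

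The missing idea, and the paper's route, is elementary: $\cU(M)=\Ker(\partial\colon\Lambda^r_r(M)\to\Lambda^{r-1}_{r-1}(M))$ is exactly the top reduced simplicial homology $\wt{H}_{r-1}$ of the independence complex of $M$ (the top chain group is spanned by bases and there is nothing in higher degree, so the kernel \emph{is} the homology, with no weights involved); by Bj\"orner's shellability theorem \cite[Theorem 7.8.1]{Bj92} this homology over $\Z$ is free of rank $\mu^+(M^*)$ and concentrated in top degree, and universal coefficients then gives the statement over any $k$; finally $\D$ transports this to $\cUc(M)\cong\cU(M^*)$, absorbing the weights. Your alternative route through the Orlik--Solomon algebra can in fact be completed --- the identification $\cUc(M)=(\Im\partial_v)^\perp\cong(\cB(M)/\Im\partial_v)^*$ is a one-line adjunction with the perfect pairing, and one then needs the nbc-basis theorem that $\cB(M)/\Im\partial_v$ is free of rank $\mu^+(M)$ --- but that input is exactly what you leave uncited and unproved, and in this paper the corresponding identification (the dual of Proposition~\ref{prop-Uperp}) is itself proved \emph{using} the present lemma, so you must supply it independently to avoid circularity.
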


\begin{proof}
Note that $\cU(M)= \Ker(\partial) = \wt{H}_{r-1}(IN(M))$, the top reduced homology of the independence complex.  Recall from~\cite[Theorem 7.8.1]{Bj92} that $\wt{H}_{k}(IN(M;\Z)) = \Z^{\mu^+(M^*)}$ if $k=r-1$ and $0$ otherwise.  By the universal coefficients theorem, we deduce that $\cU(M)$ is a free $k$-module of rank $\mu^+(M^*)$ and by duality $\cUc(M)$ is also free and of rank equal to $\rk \cU(M^*) = \mu^+(M)$.  As $k$ is a domain and $\partial$ and $\delta$ are $k$-linear maps, $\cU(M)$ and $\cUc(M)$ are both saturated submodules.
\end{proof}

It will also be useful to have at our disposal the notions of externally and internally passive bases with respect to our chosen order on the set $I$.  Recall that if $B$ is a basis of $M$, then for any $p \in I \setminus B$, there is a unique circuit $\ci(B,p)$ contained in $B \cup p$.  Dually, for any $b \in B$ there is a unique bond $\bo(B,b)$ in $(I\setminus B) \cup b$.   An element $p \in I \setminus B$ is said to be externally active if $p$ is the minimal element in the basic circuit $\ci(B,p)$, otherwise $p$ is said to be externally passive.  An element $p \in B$ is said to be internally active if $p$ is minimal in the basic bond $\bo(B,p)$, otherwise $p$ is said to be internally passive.

We say a basis $B$ is externally (resp. internally) passive if every element of $I \setminus B$ (resp. $B$) is externally (resp. internally) passive.  An externally passive basis is also referred to as an \textit{nbc}-basis.

We conclude this paragraph by recording the following well-known equality.

\begin{proposition}
\label{prop:pass}
The number of externally (resp. internally) passive bases of $M$ is equal to $\mu^+(M)$ (resp. $\mu^+(M^*)$).
\end{proposition}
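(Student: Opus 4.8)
The plan is to reduce the statement to Whitney's broken-circuit theorem together with matroid duality. The essential point, already recorded in the paragraph preceding the proposition, is that a basis $B$ of $M$ is externally passive exactly when it is an \emph{nbc}-basis, i.e.\ contains no broken circuit $C\setminus\{\min C\}$. (Indeed, if $B$ contains the broken circuit of a circuit $C$, then $p:=\min C\notin B$, the fundamental circuit $\ci(B,p)$ equals $C$, and $p=\min\ci(B,p)$, so $p$ is externally active; conversely an externally active $p$ produces the broken circuit $\ci(B,p)\setminus\{p\}\subset B$.) So the first assertion is precisely that the number of \emph{nbc}-bases of $M$ equals $\mu^+(M)$, and the second will follow by passing to $M^*$.

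First I would invoke the broken-circuit theorem: writing $\chi_M(t)=\sum_{F}\mu_{L(M)}(\hat 0,F)\,t^{\rho(M)-\rho(F)}$ for the characteristic polynomial of the lattice of flats $L(M)$, one has for each $k$ that the number of \emph{nbc}-subsets of $M$ of size $k$ equals $(-1)^k$ times the coefficient of $t^{\rho(M)-k}$ in $\chi_M$ (see, e.g., \cite[\S7.4]{Bj92}, or the definitions on \cite[page 4]{BV}). Every \emph{nbc}-set is independent, so an \emph{nbc}-set of size $\rho(M)$ is an \emph{nbc}-basis; taking $k=\rho(M)$ isolates the single flat $F=\hat 1$ and yields
\[
\#\{\text{nbc-bases of }M\}=(-1)^{\rho(M)}\mu_{L(M)}(\hat 0,\hat 1)=\mu^+(M),
\]
the last equality being the standard alternation of signs in a geometric lattice. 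The degenerate case is consistent: a loop is a circuit whose broken circuit is $\emptyset$, which is contained in every basis, so $M$ has no \emph{nbc}-bases and $\mu^+(M)=0$. This proves the external statement; in particular it is independent of the chosen ordering of $I$, since $\mu^+(M)$ is.

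For the internal statement I would dualize. For a basis $B$ of $M$, the set $B^*:=I\setminus B$ is a basis of $M^*$, and since the bonds of $M$ are exactly the circuits of $M^*$ while $(I\setminus B)\cup b=B^*\cup b$, the fundamental bond $\bo(B,b)$ of $b\in B$ coincides with the fundamental circuit $\ci(B^*,b)$ of $b\notin B^*$ in $M^*$. Hence $b$ is internally active for $(M,B)$ iff $b=\min\bo(B,b)=\min\ci(B^*,b)$ iff $b$ is externally active for $(M^*,B^*)$. Consequently $B\mapsto I\setminus B$ is a bijection from the internally passive bases of $M$ to the externally passive bases of $M^*$, and applying the first part to $M^*$ gives that their number is $\mu^+(M^*)$, as claimed. (Alternatively, one could read off the internal count of $M$ directly as $\rk\cU(M^*)=\rk\cUc(M)=\mu^+(M^*)$ from Lemma~\ref{lem:dim} once one identifies internally passive bases with a basis of the relevant homology group, but the duality argument above is cleaner.)

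I do not expect a genuine obstacle: the result is classical and the proof is essentially bookkeeping. The only points needing care are matching the normalization of $\mu^+$ used in \cite{BV} with the sign in the broken-circuit theorem, and checking the degenerate cases (a loop for the external statement, a coloop for the internal one); both are immediate.
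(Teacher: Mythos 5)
Your proposal is correct and follows essentially the same route as the paper: the paper's proof simply cites the classical nbc-basis count (\cite[Proposition 7.4.5]{Bj92}) for the externally passive statement and then notes that matroid duality exchanges internally and externally passive bases. You merely unpack that citation via Whitney's broken-circuit theorem and spell out the duality bijection $B\mapsto I\setminus B$ (plus the loop/coloop convention for $\mu^+$), which is the same argument in expanded form.
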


\begin{proof}
The statement about externally passive bases is a special case of~\cite[Proposition 7.4.5]{Bj92}.  As duality exchanges internally and externally passive bases, the other statement follows.
\end{proof}


\subsection{Proof of Axiom A2}

Assuming that $\cU^\perp = \cB \ast \cUc^+$, we can deduce that $\cUc^\perp = \cU^+ \ast \cB$ as follows:
\[
\begin{split}
\cUc(M^E_F)^\perp & = (\D_{F \setminus E} \cU((M^*)^{I \setminus F}_{I\setminus E})^\perp \\
& = \D_{F\setminus E}((\cU(M^*)^{I \setminus F}_{I\setminus E})^\perp) \\
& = \D_{F\setminus E}(\bigoplus_{G \in \cF, F \leq G < E} \cB((M^*)^{I\setminus F}_{I\setminus G}) \ast \cUc((M^*)^{I \setminus G}_{I\setminus E})) \\
& = \bigoplus_{G \in \cF, F \leq G < E} (\D_{G\setminus E} \cUc((M^*)^{I \setminus G}_{I\setminus E})) \ast (\D_{F\setminus G} \cB((M^*)^{I\setminus F}_{I\setminus G})) \\
& = \bigoplus_{G \in \cF, F \leq G < E} \cU(M^E_G) \ast \cB(M^G_F).
\end{split}
\]
Here we use Lemma~\ref{lem:matdual}: part (2) for the first and last equality, (3) for the second and (4) for the fourth.

Thus it suffices to show that $\cU^\perp = \cB \ast \cUc^+$.  Here we may assume that $M$ is coloop-free.  We proceed by proving the following three relations:  $\cU^\perp = \Im(\delta_h: \Lambda^{r-1}_{r-1}(M) \to \Lambda^r_r(M))$, $\Im(\delta_h) \subset \cB \ast \cUc^+$, and $\cB \ast \cUc^+ \subset \cU^\perp$.

\begin{proposition} \label{prop-Uperp}
$\cU^\perp = \Im(\delta_h: \Lambda^{r-1}_{r-1}(M) \to \Lambda^r_r(M))$.
\end{proposition}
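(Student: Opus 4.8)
The plan is to identify $\cU^\perp$ directly using the definition $\cU = \Ker(\partial\colon\Lambda^r_r(M)\to\Lambda^{r-1}_{r-1}(M))$ and the fact that the pairing $\langle,\rangle$ is perfect. Since $M$ is assumed coloop-free, the ambient space $\cB(M)=\Lambda^r_r(M)$ and its neighbor $\Lambda^{r-1}_{r-1}(M)$ are finitely generated free $k$-modules, and restricting $\partial$ to $\Lambda^r_r(M)$ agrees with $\partial_h$ (as noted in the text). The key general fact is that for a $k$-linear map $\partial_h\colon V\to W$ between free modules with perfect pairings on each, one has $(\Ker\partial_h)^\perp = \Im(\partial_h^*) = \Im(\delta_h)$, \emph{provided} $\Im(\partial_h)$ is a saturated submodule of $W$. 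So the real content is a saturation check, not a formal manipulation.

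First I would set up the adjunction: for $v\in\Lambda^r_r(M)$ and $w\in\Lambda^{r-1}_{r-1}(M)$, we have $\langle v,\delta_h(w)\rangle = \langle\partial_h(v),w\rangle$, so $\delta_h(w)\in\cU^\perp$ for every $w$ exactly when $\langle\partial_h(v),w\rangle=0$ for all $v\in\cU$, which holds since $\partial_h(v)=\partial(v)=0$ for $v\in\cU$. This gives the inclusion $\Im(\delta_h)\subseteq\cU^\perp$ immediately. For the reverse inclusion, suppose $x\in\Lambda^r_r(M)$ satisfies $\langle x,\cU\rangle=0$. Working over the fraction field $\Frac(k)$ first, where everything is a genuine inner product space, one gets $x\otimes 1\in\Im(\delta_h\otimes 1)$ by the rank-nullity/orthogonal-complement argument. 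To descend back to $k$, I would use that $\delta_h$ is the adjoint of $\partial_h$ and that $\Im(\delta_h)=\Ker(\partial_h)^{\perp}$ is automatically saturated (it is the kernel of the $k$-linear map $v\mapsto\langle v,-\rangle|_{\cU}$ into the free module $\cU^*$, since $k$ is a domain); hence an element of $\cU^\perp$ that becomes divisible by some $c\in k$ after passing to $\Frac(k)$ already lies in $\cU^\perp$, and comparing ranks (both $\cU^\perp$ and $\Im\delta_h$ are saturated of the same rank $\rk\Lambda^r_r(M)-\rk\cU$) forces equality.

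The step I expect to be the main obstacle is controlling the integral (as opposed to rational) picture: a priori $\Im(\delta_h)$ could be a proper finite-index submodule of the saturated module $\cU^\perp$. The cleanest fix is to invoke the homological interpretation already established in Lemma~\ref{lem:dim}: $\cU(M)=\Ker\partial = \wt H_{r-1}(IN(M))$ is free and saturated, so $\cB(M)/\cU(M)$ is torsion-free, whence $\Im(\partial_h)=\Im(\partial\colon\Lambda^r_r\to\Lambda^{r-1}_{r-1})$ is the full kernel of $\partial$ on $\Lambda^{r-1}_{r-1}$ divided by... more precisely, dualizing the short exact sequence $0\to\cU\to\Lambda^r_r\xrightarrow{\partial_h}\Im\partial_h\to0$ with $\Im\partial_h$ free gives $0\to(\Im\partial_h)^*\to(\Lambda^r_r)^*\to\cU^*\to0$, and transporting along the perfect pairings identifies $(\Im\partial_h)^*\hookrightarrow\Lambda^r_r$ with $\Im(\delta_h)$ and the kernel of $(\Lambda^r_r)^*\to\cU^*$ with $\cU^\perp$. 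Exactness then yields $\Im(\delta_h)=\cU^\perp$ on the nose. I would present the argument in this last form, since it sidesteps any fraction-field detour and uses only the freeness facts already proved.
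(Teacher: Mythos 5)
Your first step (adjunction gives $\Im(\delta_h)\subseteq\cU^\perp$) is fine and matches the paper. The gap is in the reverse inclusion, and it is exactly the integral issue you flagged but then did not actually resolve. In your dualization argument, the exact sequence $0\to(\Im\partial_h)^*\to(\Lambda^r_r(M))^*\to\cU^*\to0$ does identify $\Ker\bigl((\Lambda^r_r(M))^*\to\cU^*\bigr)$ with $\cU^\perp$, but the image of $(\Im\partial_h)^*$ in $\Lambda^r_r(M)$ is \emph{not} automatically $\Im(\delta_h)$: under the pairing identifications, $\delta_h$ corresponds to the composite $(\Lambda^{r-1}_{r-1}(M))^*\to(\Im\partial_h)^*\hookrightarrow(\Lambda^r_r(M))^*$, where the first arrow is restriction of functionals. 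Hence $\Im(\delta_h)$ equals the image of $(\Im\partial_h)^*$ only if that restriction map is surjective, i.e.\ only if $\Im\partial_h$ is a \emph{saturated} submodule of $\Lambda^{r-1}_{r-1}(M)$ (equivalently, $\coker\partial_h$ is torsion free). That is a statement about the target of $\partial_h$; the fact you invoke --- that $\cU=\Ker\partial_h$ is saturated in $\cB(M)$, so $\cB(M)/\cU(M)\cong\Im\partial_h$ is torsion free as an abstract module --- says nothing about how $\Im\partial_h$ sits inside $\Lambda^{r-1}_{r-1}(M)$ (compare $2\Z\subset\Z$: free, but restriction of functionals is not onto). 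The same question-begging occurs in your fraction-field variant, where you assert that ``$\Im(\delta_h)=\Ker(\partial_h)^\perp$ is automatically saturated'': saturation of $\cU^\perp$ is clear, but that $\Im(\delta_h)$ fills it out integrally rather than with finite index is precisely the point at issue.

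The missing input is true, but it has to come from somewhere. One fix along your lines: the part of Bj\"orner's theorem already quoted in the proof of Lemma~\ref{lem:dim} gives $\wt{H}_{r-2}(IN(M);\Z)=0$, and since the torsion of $\coker(\partial_h)$ embeds in $\wt{H}_{r-2}(IN(M))$, this yields exactly the saturation of $\Im\partial_h$ that makes your dual sequence close up (note $\partial$ has $\pm1$ coefficients, so this can be checked over $\Z$ and then base changed to $k$). The paper instead proves the integral statement directly and more elementarily: using that the weights $a(p)$ are units, a downward induction in the lexicographic order on bases shows that $\Im(\delta_h)$ together with the span of the monomials $e_B$ for internally passive bases $B$ is all of $\cB(M)$ over $k$; combined with the rank count from Lemma~\ref{lem:dim} and Proposition~\ref{prop:pass}, this forces $\Im(\delta_h)=\cU^\perp$. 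So your route is repairable, but as written the key saturation step is unjustified.
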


\begin{proof}
By adjunction, $\Im(\delta_h) \subset \cU^\perp$. We have seen, by Lemma~\ref{lem:dim} and Proposition~\ref{prop:pass}, that $\cU(M) \subset \cB(M)$ is saturated and free of rank equal to the cardinality of the set $\ol\Bas$ of internally passive bases.  Thus the saturated submodule $(\cU(M))^\perp \subset \cB(M)$ has complementary rank.  It thus suffices to show that the saturated submodule $\Im(\delta_h) \subset (\cU(M))^\perp$ has the same rank, which would imply equality.  To do so, we will show that $\Im(\delta_h)+\ol{\Bas} = \Bas$.

We show by downward induction on the lexicographic order on bases $B$ that every monomial $e_B$ is contained in $\Im(\delta_h)+\ol{\Bas}$.  Suppose that $e_{B'} \in \Im(\delta_v)+\ol{\Bas}$ for all $B' >B$.  If $B$ is internally passive then certainly $e_{B} \in \Im(\delta_h)+\ol{\Bas}$.  If $B$ is not internally passive, then by definition there exists an element $p \in B$ such that for any $j < p$, $(B \setminus \{p\}) \cup \{j\}$ is not a basis.  We conclude that 
\[\delta_h(e_{B \setminus \{p\}}) \in \pm a(p) e_B + \mathrm{Span}(e_{B'}| B' >B), \]
which completes our induction.
\end{proof}

\begin{proposition}
$\Im(\delta_h) \subset \cB \ast \cUc^+$.
\end{proposition}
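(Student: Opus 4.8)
We want to show that $\delta_h(\Lambda^{r-1}_{r-1}(M)) \subset \cB \ast \cUc^+$, where $\cUc^+ = \bigoplus_{F < E} \cUc^E_F$ and the sum is over coloop-free flats. Since $\delta_h$ on $\Lambda^{r-1}_{r-1}(M)$ adds one element while keeping the rank (it is the ``horizontal'' part of $\delta$), its image is spanned by elements of the form $\delta_h(e_{B'})$ where $B'$ is an independent set of rank $r-1$, i.e. a basis of a corank-one flat $F = \overline{B'}$.

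**Plan.** The plan is to take such a generator $\delta_h(e_{B'})$ and rewrite it as a wedge product $b \ast \uc$ with $b \in \cB^E_F$ for a suitable coloop-free flat $F$ and $\uc \in \cUc^E_F$. Here $E = \bm 1$ is the top element (recall we reduced to $M$ coloop-free, so $E = \bm 1$ corresponds to the empty flat / the whole ground set in the appropriate normalization). The key observation is that $\delta_h$ only involves adding an element $s$ to $e_{B'}$ such that $\overline{B' \cup s} = \overline{B'} \cup \{s\}$ still has rank $r$; more precisely, the elements $s$ contributing to $\delta_h(e_{B'})$ are exactly those \emph{not} in $F = \overline{B'}$. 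So $\delta_h(e_{B'}) = \sum_{s \notin F} \pm a(s)\, s \wedge e_{B'}$, and I want to factor out $e_{B'}$ on the right. Write this as $\Big(\sum_{s \notin F} \pm a(s)\, s\Big) \wedge e_{B'}$ — but this is not quite right because $B'$ is a rank-$(r-1)$ basis of $F$, not of the contraction; the correct framework is to regard $e_{B'}$ as living in $\cB^{?}_F$ and the ``$\sum \pm a(s)\, s$'' factor as living in $\cB_?^F \cong \Lambda^1_1(M/F)$, where it is precisely $\delta$ applied to the identity element $1 \in \Lambda^0_0(M/F)$.

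**Key steps.** First I would fix a corank-one coloop-free flat $F$ (flats $F$ with $\rho(F) = r-1$ such that $M(F)$ has no coloops — I should check these are the relevant ones, handling separately the degenerate contributions where $\overline{B'}$ has a coloop). Second, for each basis $B'$ of $M(F)$, I would show $\delta_h(e_{B'}) = \pm\, \big(\delta_{M/F}(1)\big) \wedge e_{B'}$ up to sign, where $\delta_{M/F}(1) = \sum_{s \in (M/F)}\!\! a(s)\, s \in \Lambda^1_1(M/F)$; this uses the explicit formula $\delta(e) = \sum_s a(s)\, s\wedge e$ from the excerpt together with the fact that $s \wedge e_{B'}$ has rank $r$ iff $s\notin F$, so the ``vertical'' terms (those keeping rank $r-1$, i.e. $s \in F$) are exactly what $\delta_v$ collects and $\delta_h$ drops. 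Third, I would identify $e_{B'} \in \cB^F_{\bm 0}$ (or the appropriate $\cB^E_F$) and observe that $\delta_{M/F}(1) \in \cUc(M/F) = \cUc^{\bm 1}_F$: indeed $\delta_{M/F}(1)$ is a single element of $\Lambda^1_1$, which is automatically in $\Ker(\delta)$ since $\delta^2 = 0$ is not quite the reason — rather, $\delta$ of a degree-$(r)$ element where $r = 1$ for $M/F$... here I must be careful: $\cUc(M/F) = \Ker(\delta \colon \Lambda^{\rho}_\rho \to \Lambda^{\rho+1}_\rho)$ where $\rho = \rho(M/F) = 1$, so I need $\delta(\delta_{M/F}(1)) = 0$, which holds because $\delta = \delta\circ\delta$ on the relevant piece only if... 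Actually the cleanest route: $\delta_{M/F}(1) = \delta(1)$ and $\delta\delta = 0$ as an operator on $\Lambda(M/F)$ restricted appropriately, giving $\delta(\delta_{M/F}(1)) = 0$ directly. Fourth, I would conclude $\delta_h(e_{B'}) = \pm\, b \ast \uc$ with $b = e_{B'}$ and $\uc = \delta_{M/F}(1) \in \cUc^{\bm 1}_F \subset \cUc^+$ (this requires $F < \bm 1$, i.e. $M/F$ is nontrivial, i.e. $F \neq \bm 1$, which holds since $\rho(F) = r - 1 < r$), so $\delta_h(e_{B'}) \in \cB^F_{\bm 0} \ast \cUc^{\bm 1}_F \subset \cB \ast \cUc^+$. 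Finally, summing over all $B'$ shows $\Im(\delta_h) \subset \cB\ast\cUc^+$.

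**Main obstacle.** The principal technical point — and the place where I expect to spend the most care — is the bookkeeping of which flat $F$ each monomial $e_B$ ($B$ a basis of $M$) ``belongs to'' under this factorization, together with the signs from Corollary~\ref{cor-distr}: a given $e_B$ decomposes in several ways as $e_{B'}\wedge s$, and one must organize $\delta_h(e_{B'})$ coherently so that the factorization genuinely lands in a \emph{single} summand $\cB^{\bm 1}_F \ast \cUc^F_{\bm 0}$ (rather than an uncontrolled sum across flats), and verify $\delta_{M/F}(1)$ really is the correct representative of $\Ker(\delta)$ in $\Lambda^1_1(M/F)$ — in particular that it is nonzero, which uses coloop-freeness of $M(F)$ being irrelevant but coloop-freeness somewhere ensuring $M/F$ has no loops so that $\delta(1) \neq 0$. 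I would also double-check the edge case where $\overline{B'}$ fails to be coloop-free, showing those terms either vanish or are absorbed; this is where the reduction ``$M$ coloop-free'' at the start of the subsection is used.
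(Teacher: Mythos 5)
Your core idea --- factor $\delta_h(e_{B'})$ as (something in $\cB(M(F))$) $\ast$ (something in $\Ker\delta$ for $M/F$), using $\delta\circ\delta=0$ for the second factor --- is the same mechanism the paper uses, and your computation is correct \emph{when} $\overline{B'}$ happens to be a coloop-free flat. The genuine gap is the case you set aside as ``degenerate'': if $M(\overline{B'})$ has coloops then $\overline{B'}\notin\cF$, so $\cB^{\bm 1}_{\overline{B'}}$ and $\cUc^{\overline{B'}}_{\bm 0}$ are not summands of $\cB$ and $\cUc^+$, and your factorization proves nothing about membership in $\cB\ast\cUc^+$. This case is not an edge case but often the generic one: in $U_{2,4}$ every one-element independent set $B'$ has $\overline{B'}=B'$ a coloop of $M(\overline{B'})$, yet $\delta_h(e_{B'})\neq 0$, so these terms neither vanish nor is any ``absorption'' mechanism available in your write-up. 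Your aside that coloop-freeness of $M(F)$ is ``irrelevant'' is exactly backwards: it is what puts $F$ in $\cF$; loop-freeness of $M/F$ comes merely from $F$ being a flat. (Two smaller slips: the needed inequality is $F>\bm 0$, i.e.\ $F\subsetneq I$, not $F<\bm 1$; and the product should be written $\cB^{\bm 1}_F\ast\cUc^F_{\bm 0}$ --- as you label it, $\cB^F_{\bm 0}\ast\cUc^{\bm 1}_F$, the middle indices do not match and the product is zero in the framework.)

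The missing idea, which is how the paper proceeds, is to replace $\overline{B'}$ by its coloop-free core. Let $J$ be the set of $j\notin B'$ with $B'\cup\{j\}$ not a basis and set $F=\bigcup_{j\in J}\ci(B',j)$, the union of the fundamental circuits; this is a coloop-free flat with $F\subsetneq I$, with basis $T=B'\cap F$, and in general of corank larger than one in $M$. Since the extra indices $i\in\overline{B'}\setminus F\subseteq B'$ are killed by $e_{B'}$, one gets $\delta_h(e_{B'})=\pm\, e_T\wedge\delta^F(e_{B'\setminus T})$ with $e_T\in\cB^{\bm 1}_F$. The remaining nontrivial check, absent from your plan, is that every surviving $i\notin F$ raises the rank of $B'\setminus T$ in $M/F$, so that $\delta^F(e_{B'\setminus T})=\delta^F_h(e_{B'\setminus T})$ lies in the top bidegree $\Lambda^{r'}_{r'}(M/F)$, $r'=\rho(M/F)$, whence $\delta^F\circ\delta^F=0$ gives $\delta^F(e_{B'\setminus T})\in\cUc(M/F)=\cUc^F_{\bm 0}\subset\cUc^+$. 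Your situation is the special case $T=B'$, where the cofactor is $\delta^F(1)$ in a rank-one contraction; in general it is the $\delta$-image of a positive-degree monomial in a contraction of arbitrary rank, and without this generalization the proof does not go through. (By contrast, your worry about a single $e_B$ admitting several factorizations is harmless: it suffices that each generator $\delta_h(e_{B'})$ lie in $\cB\ast\cUc^+$, and different generators may use different flats.)
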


\begin{proof}
For any independent set $S \subset I$ of cardinality $r-1$, we wish to show that $\delta_h(e_S) \in \cB \ast \cUc^+$.  Consider the set
\[J = \{j \in I \setminus S \mid S \cup \{j\} \; \text{is not a basis}\}.\]
For each $j\in J$, there is a unique circuit $C_j$ contained in $S \cup \{j\}$.  Let $F$ denote the flat $\bigcup_{j \in J} C_j \subset I$.  Note that $F\neq I$ and is a coloop-free flat with basis $T:= S \cap F$ such that $J \subset F \subset J \cup S$.

Simple manipulation of symbols yields:
\begin{equation*}
\begin{split}
 \delta_h e_S & =  \sum_{i \in I \setminus J} a(i) e_i \wedge e_S = \sum_{i \in I \setminus F} a(i) e_i \wedge e_S \\ 
 & = \epsilon(T, S \setminus T) \sum_{i \in I \setminus F} a(i) e_i \wedge e_T \wedge e_{S \setminus T} \\
& = (-1)^{|T|} \epsilon(S \setminus T, T) e_T \wedge \sum_{i \in I \setminus F} a(i) e_i \wedge e_{S\setminus T}\\
& = (-1)^{|T|} \epsilon(S \setminus T, T) e_T \wedge \delta^F (e_{S \setminus T}),
\end{split}
\end{equation*}
where $\delta^F$ denotes the boundary operator $\delta$ for the contracted matroid $M / F$.  Note that for any $i \in I \setminus F$, $(S\setminus T) \cup \{i\} \subset S \cup \{i\}$ is independent and so $\delta^F  (e_{S \setminus T}) = \delta^F_h (e_{S \setminus T})$.  It follows that $\delta^F(e_{S \setminus T}) \in \Ker(\delta^F) = \cUc^F_{I}$ and hence we have shown that $\delta_h e_S \in \cB^{\emptyset}_F \ast \cUc^F_{I}$.
\end{proof}

\begin{proposition}\label{prop:B times Uc perp U}
We have $\cB \ast \cUc^+ \subset \cU^\perp$.  More generally, for any (not necessarily coloop-free) flat $K \neq I$,
\[ \cB(M(K)) \wedge \cUc(M/K) \subset \cU(M)^\perp \]
and for any flat $K \ne \emptyset$
\[ \cU(M(K)) \wedge \cB(M/K) \subset \cUc(M)^\perp, \]
where $\cU(M)^\perp$ and $\cUc(M)^\perp$ denote the perpendicular subspaces in $\cB(M)$. 
\end{proposition}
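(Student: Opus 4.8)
The plan is to prove the two "more general" containments, since $\cB \ast \cUc^+ \subset \cU^\perp$ follows by summing the first one over all coloop-free flats $K$ with $\emptyset \le K < I$ (recall $\cUc^+ = \bigoplus_{F < E}\cUc^E_F$, and here the relevant splittings are $\cB^{\emptyset}_K \ast \cUc^K_I$ via the wedge product $\cB(M(K)) \wedge \cUc(M/K)$). Moreover, the two general containments are exchanged by the duality map $\D$: using Lemma~\ref{lem:matdual}(2), $\D$ sends $\cU(M)$ to $\cUc(M^*)$ and $\cUc(M/K)$ to $\cU(M^*(K))$ (since $(M/K)^* = M^*(I\setminus K)$... one has to track the indexing carefully), and Lemma~\ref{lem:matdual}(4) turns a wedge $\cB(M(K)) \wedge \cUc(M/K)$ into a wedge of the dual form up to sign, while Lemma~\ref{lem:matdual}(3) says $\D$ respects orthogonality up to a global sign. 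So it suffices to prove one of them, say $\cB(M(K)) \wedge \cUc(M/K) \subset \cU(M)^\perp$ for any flat $K \ne I$.

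For this, I would argue by a direct pairing computation. Take $u \in \cU(M) = \Ker(\partial\colon \Lambda^r_r(M) \to \Lambda^{r-1}_{r-1}(M))$, take $x \in \cB(M(K)) = \Lambda^{r'}_{r'}(M(K))$ where $r' = \rho(K)$, and take $\uc \in \cUc(M/K) = \Ker(\delta\colon \Lambda^{r-r'}_{r-r'}(M/K) \to \Lambda^{r-r'+1}_{r-r'}(M/K))$. I want to show $\la x \wedge \uc, u\ra = 0$. The key is that $x \wedge \uc$ lies in $\Lambda(K) \wedge \Lambda(I\setminus K)$, so by Corollary~\ref{cor-distr} (or rather the Lemma preceding it, on the multiplicativity of $\la,\ra$ over the splitting $\Lambda(I) = \Lambda(K) \otimes \Lambda(I\setminus K)$) the pairing $\la x \wedge \uc, u\ra$ can be rewritten via left contraction: $\la x \wedge \uc, u \ra = \la \uc, (x \wedge \uc') \lcon u \ra$-type manipulations, reducing to pairing $\uc$ against something that should land in the image of $\delta^K$ (the adjoint of $\partial^K = \partial_h$ on the contracted matroid). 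Concretely: contracting $u$ on the left by $x \in \Lambda(K)$ produces an element of $\Lambda(I\setminus K)$ supported in the right rank/degree for $M/K$; the hypothesis $\partial u = 0$, together with the fact that the "$K$-part" of $\partial$ restricted to such monomials is exactly $\partial_h$ for $M/K$, forces $x \lcon u$ (or rather its relevant component) to lie in $\Ker(\partial^K_h)^\perp$ — equivalently in $\Im(\delta^K_h) = \Im(\delta^K)$ on $\Lambda^{r-r'}_{r-r'}(M/K)$ by Proposition~\ref{prop-Uperp} applied to $M/K$ — which is precisely $\cUc(M/K)^\perp$. Pairing this against $\uc \in \cUc(M/K)$ gives zero.

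The main obstacle I anticipate is bookkeeping: making the reduction "$x \lcon u \in \cUc(M/K)^\perp$" precise. One must check (i) that contraction by a monomial $e_S$ with $S \subset K$ a basis of $M(K)$ sends $\cB(M)$ into $\cB(M/K)$ (this is the basis-of-contraction fact already used to define $\ast$), (ii) that $\partial(u) = 0$ implies $\partial^K(e_S \lcon u) = 0$ up to signs — here the derivation property of $\partial$ and the identity $\partial(e_S \wedge -) = \partial(e_S)\wedge - \pm e_S \wedge \partial(-)$ dualize to a relation $e_S \lcon \partial(\cdot)$ vs $\partial(e_S \lcon \cdot)$, so that $e_S \lcon u \in \Ker(\partial^K) = \cU(M/K)$ — wait, that's the wrong space. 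Let me instead pair directly: $\la x \wedge \uc, u\ra = \pm \la \uc, x \lcon u\ra$ does not quite typecheck since $x \lcon u$ may not be homogeneous in the matroid grading, so I would first project onto $\cB(M/K) \subset \Lambda(I\setminus K)$; the orthogonal complement of $\cB(M/K)$ is spanned by monomials $e_T$ with $\rho(T)$ wrong, and one checks those don't pair with $\uc \in \cB(M/K)$ anyway. Then the surviving term is $\la \uc, \pi(x \lcon u)\ra$ where $\pi(x \lcon u) \in \cB(M/K)$, and I claim $\pi(x \lcon u) = x \lcon u$ actually lands in $\cU(M/K)$... no — I need it in $\cUc(M/K)^\perp = \Im(\delta^K)$. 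The cleanest route: don't contract $u$; instead use that $x \wedge \uc \in \Im$ of something. Since $\uc \in \Ker(\delta^K)$ and $\delta$ on $\Lambda^r_r$ is $\delta_v$, write $x \wedge \uc$ and apply $\la -, u\ra = \la -, \text{(cycle)}\ra$; use adjointness $\la \delta(\text{?}), u\ra = \la \text{?}, \partial u\ra = 0$. So the real identity to establish is: $x \wedge \cUc(M/K) \subset \Im(\delta\colon \Lambda^{r-1}_{r-1}(M)\to\Lambda^r_r(M)) = \cU(M)^\perp$ (the last equality is Proposition~\ref{prop-Uperp}). And $x \wedge \uc$ with $\delta^K \uc = 0$: write $\uc = \delta^K(w) + (\text{cycle})$? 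No — $\uc$ itself is a cycle for $\delta^K$, i.e. $\uc \in \Ker \delta^K$, not a boundary. Hmm: $\Ker(\delta) = \Im(\delta)^{\perp\perp}$... Actually the honest statement is $\cUc(M/K) = \Ker(\delta^K) $ and I want $x \wedge \Ker(\delta^K) \subset \Im(\delta^M)$. Compute $\delta^M$ of a preimage: there should be $z \in \Lambda^{r-1}_{r-1}(M)$ with $\delta^M z = x \wedge \uc$; natural guess $z = x' \wedge \uc$ for $x' \in \Lambda^{r'-1}_{r'-1}(M(K))$ with $\delta^{M(K)} x' = x$ — but $x \in \cB(M(K)) = \Lambda^{r'}_{r'}$ need not be in $\Im(\delta^{M(K)})$. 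I think the correct preimage uses the other variable: since $\uc \in \Ker(\delta^{M/K})$ and — crucially — $\uc$ being in the kernel means by Proposition~\ref{prop-Uperp} for $M/K$ that $\uc \in (\Im \partial^{M/K}_h)$? No, kernel of $\delta$ equals $(\Im \partial)^\perp$... I will need to be careful, and I expect this sign-and-indexing reconciliation — matching $\delta^M$, $\delta^{M(K)}$, $\delta^{M/K}$ under the tensor splitting $\Lambda(I) = \Lambda(K)\otimes\Lambda(I\setminus K)$ via Corollary~\ref{cor-distr}, and invoking Proposition~\ref{prop-Uperp} for $M/K$ to identify $\cUc(M/K)^\perp$ with $\Im(\delta^{M/K}_h)$ — to be the crux of the argument; the rest is formal manipulation with $\lcon$, $\wedge$, and the multiplicativity of $\la,\ra$.
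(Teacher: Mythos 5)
Your reduction to the single containment $\cB(M(K)) \wedge \cUc(M/K) \subset \cU(M)^\perp$ via the duality map $\D$ matches the paper, but the core of your argument has a genuine gap, and it is precisely at the point you flag as unresolved. You correctly compute that the pairing reduces to $\la \uc, e_S \lcon u\ra$ (after reducing to $x = e_S$ for $S$ a basis of $K$), and at one point you even derive the right fact --- that $e_S \lcon u \in \Ker(\partial^K) = \cU(M/K)$, which is the paper's Lemma~\ref{lem-div}, proved by exactly the Leibniz-rule manipulation you sketch --- but you then discard it as ``the wrong space.'' Your substitute claim, that the relevant element lies in $\Im(\delta^K_h)$ ``which is precisely $\cUc(M/K)^\perp$,'' is false: by Proposition~\ref{prop-Uperp}, $\Im(\delta_h)$ is $\cU^\perp$, not $\cUc^\perp$ (the latter is an image of $\partial$, by the dual statement), and pairing an element of $\cU(M/K)^\perp$ against $\uc \in \cUc(M/K)$ is not automatically zero. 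Your fallback attempt --- exhibiting $x \wedge \uc$ directly as a boundary $\delta^M z$ --- is abandoned without a preimage, so the proposal does not close.

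What is missing is the one nontrivial input the paper supplies: for any non-empty matroid $N$ (here $N = M/K$, non-empty since $K \ne I$), the subspaces $\cU(N)$ and $\cUc(N)$ are orthogonal to each other (Lemma~\ref{lem-orthog}). This is not formal: the paper proves it by passing to the universal weight ring $\Z[b_s^{\pm 1}]$, observing that $\cU$ and $\cUc$ are eigenspaces of the self-adjoint operator $\Delta_h = \delta_h\partial_h + \partial_h\delta_h$ with eigenvalues $0$ and $\sum_s b_s$ respectively (using $\Delta_h + \Delta_v = (\sum_s b_s)\,\id$ on top degree), and then base-changing to $k$; the detour through the universal ring is needed because $\sum_{s} a(s)$ may vanish in $k$. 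With that lemma in hand, the correct conclusion of your computation is immediate: $e_S \lcon u \in \cU(M/K)$, hence $\la \uc, e_S\lcon u\ra = 0$. Without it (or some equivalent argument), your plan does not yield the proposition.
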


\begin{proof}  By the same duality argument given above, it suffices to prove the first inclusion.
We begin with the following special case.

\begin{lemma}\label{lem-orthog}
Assume that $M$ is a non-empty matroid (not necessarily coloop-free).  Then $\cU(M)$ and $\cUc(M)$ are orthogonal to each other.
\end{lemma}

\begin{proof}
Let $P = \Z[b_s,b_s^{-1}]_{s \in I}$ be the ring of Laurent polynomials in $I$.  In this proof we use the universal weight function $\tilde a: I \to P$ which takes $s \in I$ to $b_s$ to define versions of $\cB$, $\partial$, $\delta$, $\cU$ and $\cUc$ over $P$.  Note that after changing rings from $P$ back to $k$, the modified bilinear form and operators base change to the original ones.

Consider as in ~\cite{Denham} the operators $\Delta_h := \delta_h \circ \partial_h + \partial_h \circ \delta_h$ and $\Delta_v := \delta_v \circ \partial_v + \partial_v \circ \delta_v$.

Note that $\Delta_h(\cU(M)) = 0$.  Similarly, $\Delta_v(\cUc(M))=0$.  A simple computation shows that 
\[\Delta_h + \Delta_v |_{\cB(M)}= (\sum_{s \in I} b_s) \id_{\cB(M)}.\]
 Hence $\Delta_h |_{\cUc(M)} = (\sum_{s\in I} b_s) \id_{\cUc(M)}$.  We conclude that $\cU(M)$ and $\cUc(M)$ are eigenspaces for the self-adjoint operator $\Delta_h$ with distinct eigenvalues and thus orthogonal.  Base changing to $k$ preserves this orthogonality and we obtain the desired result.
\end{proof}

To show that $\cB(M(K)) \wedge \cUc(M/K) \subset \cU(M)^\perp$, consider any $x \in \cB(M(K))$, $\check{u} \in \cUc(M/K)$ with $K \ne I$, and any $u \in \cU(M)$. We wish to show that $\la x \wedge \check{u}, u\ra = 0$, or equivalently that $\la \check{u}, x \lcon u \ra = 0$.  By the previous lemma, it is enough to show that the contraction $x \lcon u \in \cB(M/K)$ is in $\cU(M/K)$ or, in other words, that $\partial^K(x \lcon u)=0$.  Here $\partial^K$ (resp. $\delta^K$) denotes the differential $\partial$ (resp. $\delta$) for the matroid $M/K$ and similarly $\partial_K$ and $\delta_K$ denote the differentials for $M(K)$.


It suffices to consider the case when $x=e_S$ for a basis $S$ of $K$.  This is the content of the following lemma, whose proof completes the proof of the proposition.

\begin{lemma} \label{lem-div}
Let $S \in \cI$ be an independent set and $u \in \cU(M)$.  Then the contraction $e_S \lcon u$ is an element of $\cU(M/\overline{S})$.
\end{lemma}

Let $r' = \rho(M/\overline{S})$. Then $\partial^{\overline{S}}(e_S \lcon u) \in \Lambda^{r'-1}_{r'-1}(M/\overline{S})$.  To show that $\partial^{\overline{S}}(e_S \lcon u)=0$ it thus suffices to show that for any $w \in \Lambda^{r'-1}_{r'-1}(M/\overline{S})$, the pairing $\la \partial^{\overline{S}}(e_S \lcon u), w \ra$ vanishes.  By adjunction,
\[ \la \partial^{\overline{S}}(e_S \lcon u), w \ra = \la u, e_S \wedge \delta^{\overline{S}} w \ra.\]
The Leibniz rule for $\delta$ tells us that $\delta(e_S \wedge w)= \delta_{\overline{S}}(e_S) \wedge w \pm e_S \wedge \delta^{\overline{S}} w$.  Thus the above expression can be rewritten as
\[ = \pm(\la u, \delta(e_S \wedge w) \ra - \la u, \delta_{\overline{S}}(e_S) \wedge w \ra) \]
By adjunction and the definition of $\cU$, the first term vanishes and we are left with
\[ = \pm \la u, \delta_{\overline{S}}(e_S) \wedge w \ra.\]
But $e_S \in \cB(M(\overline{S})) = \Lambda^{r-r'}_{r-r'}(M(\overline{S}))$, so $\delta_{\overline{S}}(e_S) \in \Lambda_{r-r'}^{r-r'+1}(M(\overline{S}))$.
We conclude that the wedge product $\delta_{\overline{S}}(e_S) \wedge w$ lies in $\Lambda_{r-1}^r(M)$,\footnote{Warning: here we use that if $T\subset E$ is dependent in $M(E)$, then for any $U \subset I \setminus E$, $T \cup U$ is dependent in $M$.  Note however, that if $T' \subset I \setminus E$ is dependent in $M/E$ and $U' \subset E$, then $T' \cup U'$ need NOT be dependent in $M$.} while $u \in \Lambda^r_r(M)$ and so the pairing vanishes. \end{proof}



\subsection{Proof of Axiom A3}

Since the two equations of Axiom A3 are equivalent by adjunction, we only need to prove the first one.  Without loss of generality we can assume the largest and smallest flats that occur in this computation are $\bm{1}$ and $\bm{0}$, respectively.  We wish to show for any $E,F \in \cF$, any basis $B$ of $M^{\bm{1}}_E$ and any elements $u \in \cU^{\bm{1}}_F, \check{u} \in \cUc^E_{\bm{0}}$ there is an equality:
\[(u \ldiv e_B)\ast \check{u}  = u \ldiv (e_B \ast \check{u}).\]

We consider two cases: either $F \geq E$ or it is not.

If $F \geq E$, then $u \in \Lambda(E)$ and by Corollary~\ref{cor-distr},
\[
\begin{split} 
(u \ldiv e_B) \ast \check{u} & = (u \lcon e_B)\wedge \check{u} \\
& = (u \wedge 1) \lcon (e_B \wedge \check{u}) \\
& = u \lcon (e_B \wedge \check{u}) = u \ldiv (e_B \ast \check{u}).
\end{split}
\]

If $F \ngeq E$, then $u\ldiv e_B = 0$, so the left hand side vanishes.  It thus remains to show that the right hand side also vanishes.  To check that it does, it suffices to show that for any $b' \in \cB^F_{\bm{0}}$, the pairing $\la u \ldiv (e_B \ast \check{u}), b' \ra = \la u \lcon (e_B \wedge \check{u}), b' \ra$ vanishes.  By adjunction:
\[\la u \lcon (e_B \wedge \check{u}), b' \ra = \la \check{u}, e_B \lcon (u \wedge b') \ra.\]
Let $S = B \cap F$ and $T = B \setminus S$. Then $e_B = \pm e_{S} \wedge e_{T}$. Thus
\[ 
\begin{split}
\la \check{u}, e_B \lcon (u \wedge b') \ra  & = \pm \la \check{u}, (e_{S} \wedge e_{T}) \lcon (u \wedge b') \ra \\
& = \pm \la  \check{u}, (e_S \lcon u) \wedge (e_T \lcon b') \ra. 
\end{split}
\]

There are now two possibilities, either $T$ is an independent set in $M(E\vee F)/F$ or it is not.

If $T$ is not independent in $M(E\vee F)/F$, then $e_T \lcon b'=0$ and we are done.

We will thus assume that $T$ is independent in $M(E\vee F)/F$, in which case
\[ |T| \leq \rk(E\vee F) - \rk(F).\]
On the other hand, $|T| = \rk(E)-|S| \geq \rk(E) - \rk(E \cap F)$.  Using these observations, we find that the standard inequality
\[ \rk(E) + \rk(F) \geq \rk(E \vee F) + \rk(E\cap F)\]
is in fact an equality.  It follows that $T$ is a basis of $M(E\vee F)/F$ and $S$ is a basis of $M(E\cap F)$. 

Consider $e_S \lcon u$.  The previous paragraph implies $e_S \in \cB^{\bm{1}}_{E\cap F}$ and by Lemma~\ref{lem-div}, $e_S \lcon u \in \cU^{E\cap F}_F$.  Moreover the rank equality implies that any basis of $M(F)/(E\cap F)$ is also a basis of $M(E\vee F)/E$, so there is a natural inclusion $\cU^{E\cap F}_F \subset \cU^{E}_{E \vee F}$ and $e_S \lcon u \in \cU^{E}_{E \vee F}$.

Because $T$ is a basis of $M(E\vee F)/F$, we find that $e_T \lcon b' \in \cB^{E \vee F}_{\bm{0}}$.

We conclude that 
\[(e_S \lcon u) \wedge (e_T \lcon b') = (e_S \lcon u) \ast (e_T \lcon b') \in \cU^F_{E\vee F} \ast \cB^{E\vee F}_{\bm{0}}.\]

By Proposition~\ref{prop:B times Uc perp U}, $\cU^F_{E\vee F} \ast \cB^{E\vee F}_{\bm{0}} \subset \cUc^{F\perp}_{\bm{0}}$ and so the pairing vanishes as desired.


\section{Connection to Brylawski-Schechtman-Varchenko determinant formula}
\label{sec:BSV form}

In~\cite{SV}, Schechtman-Varchenko defined a bilinear form, analogous to the classical Shapovalov form, on a certain flag space associated to a hyperplane arrangement, and computed its determinant.  This bilinear form and determinant formula are further generalized to the setting of arbitrary matroids in work of Brylawski-Varchenko~\cite{BV}.  In this section, our aim is to compare the bilinear form of Brylawski-Schechtman-Varchenko to the restriction of the bilinear form we have defined here on $\cB$ to the space $\cU$.  Our interest in the form on $\cU$ comes from the fact, shown in Section \ref{sec:characters}, that its rank determines the characters of the simple $R$-modules, and in particular, whether $R$ is semisimple.

Since our bilinear form is defined on one graded piece at a time, we can assume that $M$ is coloop-free and consider just the largest piece $\cB(M)$.

The Brylawski-Schechtman-Varchenko (BSV) form is defined via the Orlik-Solomon algebra.  The $r$th graded piece of the Orlik-Solomon algebra $A^r$, is defined as the quotient of $\cB(M)$ by the image of $\partial_v: \Lambda^{r+1}_r(M) \to \cB(M)$.  Thus $A^r = \cB(M)/\Im \partial_v$.
The BSV form is defined on the \emph{flag space} $\cF^\bullet(M)$ of the matroid.  
We will be interested in the piece $\cF^r(M)$ of top degree; by \cite[(2.7)]{BV} 
this space is naturally identified with the dual space $A^r(M)^*$.  Using the formula
\cite[(3.5)]{BV} it follows that the pairing on $\cF^d$ is the pullback of the pairing on 
$\cB$ by the map
\begin{equation}\label{Flag space}
\cF^r(M) \to A^r(M)^* \hookrightarrow \cB(M)^* \cong \cB(M),
\end{equation}
where the last identification is via the pairing.

\begin{theorem} There are natural isomorphisms $\cUc(M) \cong \cF^r(M)$ and $\cU(M) \cong \cF^{n-r}(M^*)$, which intertwine the restriction of the pairing $\la,\ra$ from $\cB(M)$ to $\cUc(M)$ and $\cU(M)$ and the BSV form on the flag spaces $\cF^{r}(M)$ and $\cF^{n-r}(M^*)$.
\end{theorem}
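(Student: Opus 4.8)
The plan is to construct the two isomorphisms directly from the definitions and then check compatibility of the forms by chasing through the identifications in \eqref{Flag space}. First I would establish the isomorphism $\cUc(M) \cong \cF^r(M)$. By definition $\cUc(M) = \Ker(\delta \colon \cB(M) \to \Lambda^{r+1}_r(M))$, and the pairing $\la,\ra$ identifies $\cB(M)^* \cong \cB(M)$ in such a way that the adjoint of $\partial_v \colon \Lambda^{r+1}_r(M) \to \cB(M)$ is $\delta_v = \delta|_{\cB(M)}$. Hence the annihilator in $\cB(M)^*$ of $\Im \partial_v$ is exactly $\Ker \delta = \cUc(M)$. On the other hand $\cF^r(M) \cong A^r(M)^* = (\cB(M)/\Im\partial_v)^*$, which under the pairing-induced identification $\cB(M)^* \cong \cB(M)$ is precisely the annihilator of $\Im\partial_v$, i.e.\ $\cUc(M)$. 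So the composite $\cF^r(M) \cong A^r(M)^* \hookrightarrow \cB(M)^* \cong \cB(M)$ of \eqref{Flag space} has image exactly $\cUc(M)$ and is injective, giving the natural isomorphism $\cF^r(M) \xrightarrow{\sim} \cUc(M)$.

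Next, the statement that the BSV form on $\cF^r(M)$ corresponds to the restriction of $\la,\ra$ to $\cUc(M)$ is essentially tautological once the previous paragraph is set up correctly: by the displayed remark before the theorem, the BSV pairing on $\cF^r$ is by definition the pullback of $\la,\ra$ along the map \eqref{Flag space}, and we have just identified that map with the inclusion $\cUc(M) \hookrightarrow \cB(M)$. I would spell out that the cited formulas \cite[(2.7)]{BV} and \cite[(3.5)]{BV} are exactly what guarantees this — namely that (2.7) provides the identification $\cF^r(M) \cong A^r(M)^*$ and (3.5) expresses the BSV form as this pullback — and then the compatibility of forms is immediate. The one point requiring care is a possible normalization discrepancy: the pairing $\la e_S, e_S\ra = a(S)^{-1}$ carries the weight function, and one should check that the weights in \cite{BV} enter in the same way (up to the inconsequential scaling that is harmless for rank computations); I expect this to be a routine matching of conventions.

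For the second isomorphism $\cU(M) \cong \cF^{n-r}(M^*)$, I would apply matroid duality. By Lemma~\ref{lem:matdual}(2), the duality map $\D$ carries $\cU(M)$ isomorphically onto $\cUc(M^*) \subset \cB(M^*) = \Lambda^{n-r}_{n-r}(M^*)$, and by the first part of the theorem (applied to $M^*$ in place of $M$) we have $\cUc(M^*) \cong \cF^{n-r}(M^*)$ as spaces with their respective forms. Composing gives $\cU(M) \cong \cF^{n-r}(M^*)$. To see that this composite intertwines the forms, I would use Lemma~\ref{lem:matdual}(3): the adjoint of $\D|_{\cB(M)}$ is $(-1)^{r(n-r)}\D|_{\cB(M^*)}$, so for $u, u' \in \cU(M)$ one has $\la \D u, \D u'\ra_{\cB(M^*)} = (-1)^{r(n-r)} \la u, \D^2 u'\ra_{\cB(M)} = (-1)^{r(n-r)} \cdot (-1)^{r(n-r)} a(I)^{-1} \la u, u'\ra_{\cB(M)}$, using $\D^2 = (-1)^{r(n-r)} a(I)^{-1}\,\mathrm{id}$ from the preceding lemma. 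Thus $\D$ intertwines the two restricted pairings up to the global scalar $a(I)^{-1}$, which again does not affect the isomorphism class of the form (and in the ``universal'' normalization where one tracks weights, this scalar is exactly the expected one relating the BSV forms for $M$ and $M^*$).

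The main obstacle I anticipate is not any of the linear algebra above, which is essentially formal, but rather the bookkeeping needed to verify that the identifications $\cF^r(M) \cong A^r(M)^*$ from \cite[(2.7)]{BV} and the form given by \cite[(3.5)]{BV} genuinely match our conventions on the nose — in particular getting the weight function $a$ (equivalently, the entries of the BSV matrix) to appear in precisely the same place, and confirming that the projection $\partial_v$ defining $A^r$ really is adjoint to our $\delta_v$ under $\la,\ra$. Once those conventions are pinned down, everything else assembles from Lemma~\ref{lem:matdual}.
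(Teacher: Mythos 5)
Your proposal is correct and follows essentially the same route as the paper: identify $\cF^r(M)\cong A^r(M)^*$ with $(\Im \partial_v)^\perp=\cUc(M)$ inside $\cB(M)\cong\cB(M)^*$ (where the paper invokes the dual statement of Proposition~\ref{prop-Uperp}, you use the formal adjointness $(\Im\partial_v)^\perp=\Ker\delta_v$, which amounts to the same identification and is if anything more self-contained), and then deduce $\cU(M)\cong\cF^{n-r}(M^*)$ by matroid duality via $\D$, exactly as the paper's ``follows by duality.'' Your explicit tracking of the factor $a(I)^{-1}$ coming from $\D^2$ and Lemma~\ref{lem:matdual}(3) is a point the paper glosses over; since it is a global unit it is harmless for the intended applications (ranks and determinants up to units, as in Corollary~\ref{cor-det}), so flagging it is appropriate rather than a gap.
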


\begin{proof}  
By definition, the flag space $\cF^r(M)$ endowed with the BSV form is isomorphic to $A^r(M)^* \subset \cB(M)^*$ with the restriction of the form on $\cB(M)^* \cong \cB(M)$.  In other words, we have bilinear form preserving isomorphisms $\cF^r(M) \cong (\cB/\Im \partial_v)^* = (\Im \partial_v)^\perp \subset \cB(M)^* \cong \cB(M)$. By the dual statement of Proposition \ref{prop-Uperp}, there is an equality $(\Im \partial_v)^\perp = \cUc(M) \subset \cB(M)$.  We conclude that there is a natural isomorphism $\cUc(M) \cong \cF^r(M)$ which intertwines the restriction of the bilinear form on $\cB(M)$ to $\cUc(M)$ with the BSV form on $\cF^r(M)$.  The other statement follows by duality.
\end{proof}

We now consider the determinant of the restriction of the bilinear form to $\cU(M)$.  It is defined as the determinant of the matrix of the bilinear form with respect to any fixed basis of the finitely generated torsion-free $R$-module $\cU(M)$.

\begin{corollary} \label{cor-det}
The determinant of the bilinear form on $\cU(M)$ is given by
\[D = \prod_{K \in \F \setminus \{I\}} \left(\sum_{i \in I \setminus K} a(i) \right)^{\beta(M/K) \mu^+(K)}.\]  
\end{corollary}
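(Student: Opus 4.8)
The plan is to deduce this determinant formula directly from the corresponding result in Brylawski–Varchenko, transported through the isomorphisms established in the preceding theorem. By that theorem there is a bilinear-form-preserving isomorphism $\cU(M) \cong \cF^{n-r}(M^*)$, so the determinant in question equals, up to a unit in $k^*$ coming from the change of basis (determinants of bilinear forms are only well-defined up to squares of units, but since $k$ is a PID and the modules are free, a clean statement is available once we fix bases), the determinant of the BSV form on the top flag space $\cF^{n-r}(M^*)$. First I would invoke the Brylawski–Varchenko determinant formula for the flag space form — this is the computation referenced in the paper as living in~\cite{BV} — which expresses $\det$ of the form on $\cF^{\mathrm{top}}(N)$ as a product over appropriate flats $K$ of a linear form $\sum_{i \notin K} a(i)$ raised to a combinatorially defined exponent built from $\beta$ and $\mu^+$ invariants.

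The substance of the proof is then a bookkeeping translation: rewrite the Brylawski–Varchenko product, indexed over flats of $M^*$ (with their weights), into a product indexed over flats of $M$. Here I would use the standard dictionary between the lattice of flats of $M^*$ and the lattice of flats of $M$: a flat $K$ of $M^*$ corresponds to $I \setminus K$ being a flat of $M$ in the complemented-lattice sense relevant here, and more precisely the relevant index set becomes $\F \setminus \{I\}$, the coloop-free flats $K$ of $M$ other than $I$ (matching the exclusion of the full set that already appears in our definition of $\cF$ and in Proposition~\ref{prop-Uperp}). Under this correspondence the linear form $\sum_{i \in I \setminus K} a(i)$ is exactly the eigenvalue-type quantity $\sum_{s \notin K} a(s) = \delta$-related scalar that appeared in Lemma~\ref{lem-orthog}, so the weight bookkeeping is consistent with the weight function $a$ used throughout. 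The exponent must match $\beta(M/K)\,\mu^+(K)$: I would check this by comparing Crapo's $\beta$ invariant and the unsigned Möbius function of the dual matroid's flats against those of $M$, using the duality identities $\beta(N^*) = \beta(N)$ for $N$ with at least two elements, together with the multiplicativity of $\mu^+$ across the flat and its contraction.

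Concretely the key steps, in order, are: (1) cite the Brylawski–Varchenko formula giving $\det$ of the BSV form on $\cF^{\mathrm{top}}$ of an arbitrary (weighted) matroid; (2) apply it to $N = M^*$ with the induced weight function, and restrict attention to the top piece $\cF^{n-r}(M^*)$; (3) invoke the theorem above to transfer this determinant to $\det$ of $\la,\ra|_{\cU(M)}$, noting that both $\cU(M)$ and the flag space are free $k$-modules of rank $\mu^+(M^*)$ by Lemma~\ref{lem:dim} and Proposition~\ref{prop:pass}, so the transferred determinant is well-defined up to a unit and the formula is asserted up to that ambiguity; (4) perform the change of indexing from flats of $M^*$ to $\F \setminus \{I\}$, rewriting each local factor; (5) verify the exponent equals $\beta(M/K)\mu^+(K)$ via the duality and product identities for $\beta$ and $\mu^+$.

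The main obstacle I expect is step (4)–(5): matching the \emph{exact} combinatorial shape of the Brylawski–Varchenko exponent — which in~\cite{BV} is phrased in their own notation for flags and pairs of flats — to the clean product $\prod_{K \in \F \setminus \{I\}}$ over coloop-free flats with exponent $\beta(M/K)\mu^+(K)$. One has to be careful that flats $K$ for which $\beta(M/K) = 0$ (e.g. when $M/K$ is disconnected or has a coloop) contribute trivial factors, so the product can be taken over all flats $K \ne I$ or just the coloop-free ones without changing the answer; reconciling the indexing conventions so that no spurious or missing factors appear is the delicate point. A secondary technical nuisance is tracking the weight function correctly under duality — the dual matroid inherits the \emph{same} weight function $a$ on $I$, and the relevant linear form for a dual flat $K$ is $\sum_{i \in I \setminus K} a(i)$, which matches our target expression, but one should confirm this against the precise normalization in~\cite{BV} rather than assume it. Once these are pinned down, the formula follows immediately.
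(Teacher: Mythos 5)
Your overall strategy is exactly the one the paper uses: quote the Brylawski--Varchenko determinant theorem for the BSV form on the top flag space, and transport it to $\cU(M)$ through the form-preserving isomorphism $\cU(M) \cong \cF^{n-r}(M^*)$ of the preceding theorem, the reindexing being left implicit there. One correction to your step (4), though: the complement of a flat of $M^*$ is in general \emph{not} a flat of $M$ but a cyclic (coloop-free) subset; the clean bijection given by complementation is between coloop-free flats of $M^*$ and coloop-free flats of $M$. The remaining flats $K'$ of $M^*$ either have $\beta(M^*(K'))=0$ and contribute trivially, or (when $M^*(K')$ is a single coloop) contribute a factor $a(i)^m$, which is a unit and hence invisible, since the determinant is only defined up to units. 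So the index set $\F\setminus\{I\}$ does come out, but not quite for the reason you state; you partially anticipate this, but the dictionary should be phrased in terms of coloop-free flats.

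The genuine gap is in step (5). Writing $K'=I\setminus K$, one has $M^*(K')=(M/K)^*$ and $M^*/K'=(M(K))^*$, so the exponent transported from the Brylawski--Varchenko formula at $K'$ is $\beta\bigl((M/K)^*\bigr)\,\mu^+\bigl((M(K))^*\bigr)$. The $\beta$-factor dualizes as you say (and since $M$ is coloop-free, $M/K$ has at least two elements, so $\beta\bigl((M/K)^*\bigr)=\beta(M/K)$ is legitimate), but $\mu^+$ is \emph{not} a duality invariant, and no multiplicativity identity will convert $\mu^+\bigl((M(K))^*\bigr)$ into $\mu^+(M(K))$. Concretely, for $M=U_{1,3}\oplus U_{1,2}$ a direct Gram computation on $\cU(M)$ gives, up to units, $(a_1+a_2+a_3)(a_4+a_5)^2$: the exponent attached to the coloop-free flat $K=\{1,2,3\}$ is $\mu^+(U_{2,3})=2=\rk \cU(M(K))$, not $\mu^+(M(K))=\mu^+(U_{1,3})=1$. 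So your plan stalls exactly at the point you flagged: the honest outcome of the translation is the exponent $\beta(M/K)$ times the number of internally passive bases of $M(K)$, i.e.\ $\mu^+\bigl((M(K))^*\bigr)=\rk\cU(M(K))$, and you must either read the symbol $\mu^+(K)$ in the target formula in that dual sense or adjust the exponent accordingly. This discrepancy changes only the sizes of the exponents, never which linear forms occur, so the application in Theorem~\ref{cor-semisimple} is unaffected; but as a verification of the formula exactly as displayed, step (5) as you describe it would not close.
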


\begin{proof}
The main result of \cite[Theorem 4.16]{BV} states that the determinant of the BSV form on $\cF^{r}(M)$ is given by
\[\prod_{K \in \F \setminus \{\emptyset\}} \left(\sum_{i \in K} a(i) \right)^{\beta(K) \mu^+(M/K)}.\]
By the previous theorem, the determinant of the form on $\cU$ is equal to the BSV form for the dual matroid.
\end{proof}

\section{Computing characters}
\label{sec:characters}

In this section we assume that $k$ is a field.  To give a sense of the behavior of the quasi-hereditary algebras defined above, in this section we use standard theory to compute the characters of their simple modules and (equivalently) the composition series multiplicities of standard modules in some simple examples.

Consider an algebra $R$ defined via the general setup of Section \ref{sec:Ringel data}.  For any $R$-module $N$ and $F \in \F$, consider $\pi_F N$, which we view as an analogue of a weight space.

Let $\Z[\F]$ be the free abelian group with generators $(e(F))_{F \in \F}$.  If $k$ is a field, for any $R$-module $N$, we define its character $\ch(N) \in \Z[\F]$ as
\[ \ch(N) = \sum_{F \in \F} \dim(\pi_F N) e(F). \]
For example, $\ch(\Delta_E) = \sum_F \dim \cU_E^F e(F)$, using Corollary \ref{cor:standard module}.

For any $F \in \F$, consider the restriction of the bilinear form on $\cB_F$ to the standard submodule (or cell module) $\cU_F \subset \cB_F$.  Let $L(F) = \cU_F/\mathrm{rad}(F)$, where $\mathrm{rad}(F)$ denotes the radical of the bilinear form on $\cU_F$.  By the general theory of cellular algebras~\cite[Section 3]{GrLe}, $L(F)$ is a simple $R$-module and the set $\{L(F)|F \in \F\}$ is a complete set of simple $R$-modules.

\begin{lemma}
\label{lem:char}
For any $F \in \F$, The character of the simple $R$-module $L(F)$ can be expressed as:
\[ \ch(L(F)) = \sum_{E \in \F, E \leq F} \rk\left(\langle~,~\rangle|_{\cU_F^E} \right)e(E).\]
\end{lemma}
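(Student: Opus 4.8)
The plan is to compute $\dim \pi_E L(F)$ for each $E \le F$ and show it equals $\rk(\langle,\rangle|_{\cU^E_F})$. Since $L(F) = \cU_F/\mathrm{rad}(F)$ and $\pi_E$ projects onto the graded piece indexed by $E$, we have $\pi_E L(F) = \cU^E_F/(\mathrm{rad}(F) \cap \cB^E_F)$, using that the radical of a graded-orthogonal form decomposes as a direct sum over the graded pieces (which holds because the pieces $\cB^E_F$ are mutually orthogonal by the hypotheses on $\langle,\rangle$). So it suffices to identify $\mathrm{rad}(F) \cap \cB^E_F$ with the radical of the restricted form $\langle,\rangle|_{\cU^E_F}$, after which the rank-nullity count $\dim \cU^E_F - \dim\bigl(\mathrm{rad}\,\langle,\rangle|_{\cU^E_F}\bigr) = \rk(\langle,\rangle|_{\cU^E_F})$ finishes the computation.

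First I would make precise the claim that the radical $\mathrm{rad}(F)$ of the bilinear form on the cell module $\cU_F = \bigoplus_{E \le F} \cU^E_F$ is the direct sum $\bigoplus_{E \le F} \mathrm{rad}\bigl(\langle,\rangle|_{\cU^E_F}\bigr)$. Indeed, for $v = \sum_E v_E$ with $v_E \in \cU^E_F$, orthogonality of distinct graded pieces gives $\langle v, w\rangle = \sum_E \langle v_E, w_E\rangle$ for any $w = \sum_E w_E$; hence $v \in \mathrm{rad}(F)$ iff each $v_E$ pairs trivially with all of $\cU^E_F$, i.e. $v_E \in \mathrm{rad}(\langle,\rangle|_{\cU^E_F})$ for every $E$. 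Taking the quotient and applying $\pi_E$ then yields $\pi_E L(F) \cong \cU^E_F/\mathrm{rad}(\langle,\rangle|_{\cU^E_F})$, whose dimension is precisely the rank of $\langle,\rangle|_{\cU^E_F}$.

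Second I would assemble the character: by definition $\ch(L(F)) = \sum_{E \in \F} \dim(\pi_E L(F))\, e(E)$, and the previous step shows the summand vanishes unless $E \le F$ (the grading forces $\cB^E_F = 0$ otherwise), giving
\[
\ch(L(F)) = \sum_{E \le F} \dim\bigl(\cU^E_F/\mathrm{rad}(\langle,\rangle|_{\cU^E_F})\bigr)\, e(E) = \sum_{E \le F} \rk\bigl(\langle,\rangle|_{\cU^E_F}\bigr)\, e(E),
\]
as claimed. I would also remark that $\cU^E_F = \cU(M(F)/E)$ and that the cell module structure making $L(F)$ simple is exactly the one supplied by the cellular structure constructed in the proof that $R$ is quasi-hereditary, so that ``$L(F) = \cU_F/\mathrm{rad}(F)$'' is literally the Graham--Lehrer construction and no further identification is needed.

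The only genuinely substantive point, and the one I would be most careful about, is the graded decomposition of the radical — specifically that the decomposition $\cB = \bigoplus \cB^x_y$ is orthogonal (given) passes to $\cU_F$ and that $\pi_E$ commutes with passing to the quotient by $\mathrm{rad}(F)$. The first is immediate from the orthogonality hypothesis on $\langle,\rangle$; the second holds because $\pi_E$ is an idempotent in $R$ and $\mathrm{rad}(F)$ is an $R$-submodule of $\cU_F$, so $\pi_E$ descends. Everything else is bookkeeping with dimensions over the field $k$.
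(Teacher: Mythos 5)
Your argument is correct and is essentially the paper's own proof: the paper's one-line justification ("the weight space decomposition of $\cU_F$ is orthogonal with respect to the inner product") is exactly your observation that the radical of the form on $\cU_F$ splits as the direct sum of the radicals of the restricted forms on the pieces $\cU^E_F$, so that $\dim \pi_E L(F)$ equals the rank of $\langle\,,\rangle|_{\cU^E_F}$. You have simply written out the bookkeeping (radical decomposition, compatibility of $\pi_E$ with the quotient) that the paper leaves implicit, so no further comment is needed.
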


\begin{proof}
As $L(F) = \cU_F/\mathrm{rad}(F)$, it suffices to recall that the weight space decomposition of $\cU_F = \oplus_E \pi_E~\cU_F = \oplus_E \cU^E_F$ is orthogonal with respect to the inner product.
\end{proof}

On the other hand, via the equations:
\[ \ch(\Delta(F)) = \sum_{E \in \F, E\leq F} [\Delta(F):L(E)] \ch(L(E)),\]
we see that knowing the characters $\{\ch(L(E))|E\in \F\}$ is equivalent to knowing the composition series multiplicities $[\Delta(F):L(E)]$.

\subsection{Semisimple characteristics}

\begin{theorem}
\label{cor-semisimple}
The algebra $R$ associated to $M$ and $a: I \to k^\times$ is semi-simple if and only if $p$ does not divide $\sum_{i \in F \setminus K} a(i)$ for any $F < K \in \F$ for which $M(F)/K$ is connected. 
\end{theorem}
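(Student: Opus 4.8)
The plan is to reduce the semisimplicity question to the non-vanishing of the determinant $D$ from Corollary~\ref{cor-det}, and then to analyze when each factor of $D$ is nonzero modulo $p$. Since $R$ is quasi-hereditary and cellular, it is semisimple if and only if every standard module $\Delta(F) = \cU_F$ is simple, i.e.\ $\cU_F = L(F)$ for all $F \in \cF$. By Lemma~\ref{lem:char} and the fact that $L(F) = \cU_F/\mathrm{rad}(F)$, this holds for a fixed $F$ precisely when the bilinear form $\langle,\rangle$ restricted to $\cU_F$ is nondegenerate, which (since $\cU_F$ is a finitely generated torsion-free module over the field $k$) is equivalent to $\det(\langle,\rangle|_{\cU_F}) \neq 0$. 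Recalling that $\cB_F = \cB(M(F)/\text{--})$ and $\cU_F = \cU(M(F))$ after the relabeling that makes $F$ the bottom flat, it suffices to treat the case $F = \bm{0}$ and $M$ coloop-free, where the relevant determinant is exactly the $D$ of Corollary~\ref{cor-det}.

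First I would record the equivalence: $R$ is semisimple iff for every coloop-free flat $F \in \cF$, the determinant of the BSV-type form on $\cU(M(F))$ is nonzero in $k$. Using Corollary~\ref{cor-det} applied to the matroid $M(F)$ (with its induced weight function), this determinant is $\prod_{K} \big(\sum_{i \in F \setminus K} a(i)\big)^{\beta((M(F))/K)\,\mu^+(K)}$, the product running over flats $K$ of $M(F)$ with $K \neq F$. Here I am identifying flats of $M(F)$ with flats $K$ of $M$ satisfying $K \subseteq F$ (equivalently $K \geq F$ in the poset ordering); note the roles of inclusion are flipped in $\cF$, so "$F < K$" in the statement corresponds to "$K \subsetneq F$" as sets. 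So the full product over all coloop-free $F$ vanishes iff some factor $\sum_{i \in F \setminus K} a(i)$ is zero in $k$ with $\beta((M(F))/K) \neq 0$ and $\mu^+(K) \neq 0$.

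Next I would invoke the standard combinatorial facts about Crapo's beta invariant and the unsigned Möbius function: $\beta(N) \neq 0$ if and only if $N$ is connected and has at least two elements (for $N$ a loopless matroid; more precisely $\beta(N) > 0$ iff $N$ is connected with $\geq 2$ elements, $\beta$ of a single coloop is $1$, and $\beta$ of a matroid with a loop is $0$), and $\mu^+(N) \neq 0$ always for a loopless matroid (indeed $\mu^+ \geq 1$), while $\mu^+$ of the matroid on the empty ground set is $1$. Applying the first fact to $N = M(F)/K$: the exponent $\beta(M(F)/K)\mu^+(K)$ is nonzero iff $M(F)/K$ is connected, which under the identifications is the connectedness condition "$M(F)/K$ is connected" in the statement — and one should also check that the coloop-free hypothesis on $F$ together with connectedness of $M(F)/K$ forces $M(F)/K$ to be loopless and of size $\geq 2$, i.e.\ forces $K$ itself to be coloop-free, matching the "coloop-free flats $K \subset F$" phrasing. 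Assembling: $R$ is non-semisimple iff there exist coloop-free flats (in the poset sense $K \geq F$, i.e.\ $K \subseteq F$ as sets) with $M(F)/K$ connected and $p \mid \sum_{i \in F \setminus K} a(i)$, which is the negation of the asserted criterion.

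The main obstacle I anticipate is bookkeeping rather than depth: carefully matching the inclusion-reversed poset conventions, confirming that the product in Corollary~\ref{cor-det} indeed ranges exactly over the pairs of flats that appear in the semisimplicity condition (including edge cases where $K$ or $M(F)/K$ is trivial), and citing the precise vanishing criteria for $\beta$ and $\mu^+$ so that "exponent nonzero" is equivalent to "$M(F)/K$ connected with $K$ coloop-free." In particular I want to be sure that flats $F$ of $M$ that are not coloop-free contribute no constraint — which is automatic since $\cF$ only contains coloop-free flats — and that a coloop in $M(F)/K$ cannot occur when $M(F)/K$ is connected of size $\geq 2$, so the stray single-coloop case of $\beta$ does not sneak in. Once these conventions are pinned down, the theorem follows by combining Lemma~\ref{lem:char}, the cellular-algebra semisimplicity criterion, and Corollary~\ref{cor-det}.
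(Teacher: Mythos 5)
Your overall strategy is the same as the paper's (semisimplicity reduces, via the cellular structure and Lemma~\ref{lem:char}, to nondegeneracy of the form on the cell modules, and then one feeds in Corollary~\ref{cor-det} together with the positivity of $\mu^+$ and the vanishing criterion for $\beta$), but the decisive bookkeeping step has a genuine gap. You apply Corollary~\ref{cor-det} with ``the product running over flats $K$ of $M(F)$ with $K\neq F$'', i.e.\ over \emph{all} flats, and then try to reconcile this with the theorem's restriction to coloop-free $K$ by asserting that connectedness of $M(F)/K$ forces $K$ to be coloop-free. That assertion is false: take $M(F)=U_{2,3}$ (a triangle) and $K$ a single-element flat; then $M(F)/K\cong U_{1,2}$ is connected, yet $M(K)$ is a coloop. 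With the all-flats reading your criterion is strictly stronger than the theorem and is in fact wrong: for $M=U_{2,3}$ with all weights equal to $1$ and $p=2$, the only determinant that actually occurs is $a_1+a_2+a_3=3\neq 0$ in $k$, so $R$ is semisimple, whereas your criterion would also demand $2\nmid a_1+a_2=2$ and would wrongly predict non-semisimplicity. The missing point is that the restriction to coloop-free flats is already built into Corollary~\ref{cor-det}: its product ranges over $K\in\cF\setminus\{I\}$, because when one dualizes the Brylawski--Varchenko determinant the index set consists of complements of flats of $M^*$, i.e.\ of \emph{cyclic} (coloop-free) flats of $M$; equivalently, the relevant exponent is $\beta$ of a restriction of the dual matroid, which vanishes for non-cyclic $K$ even when $M(F)/K$ itself is connected. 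This is exactly where the paper's proof leans on the stated form of Corollary~\ref{cor-det}, and it cannot be recovered by the ``connected $\Rightarrow$ coloop-free'' shortcut.

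A secondary, reparable inaccuracy: you identify the cell module $\cU_F$ with $\cU(M(F))$, but by Corollary~\ref{cor:standard module} the standard module $\Delta(F)$ is $\bigoplus_{E\in\cF,\,E\geq F}\cU(M(F)/E)$, of which $\cU(M(F))$ is only the weight space at $E=\bm{1}$; the paper accordingly checks nondegeneracy of the form on every weight space $\cU^E_F$. Your reduction to the single space $\cU(M(F))$ happens to give an equivalent condition, since the weight spaces are pairwise orthogonal and the linear factors occurring in the determinant for a general $E$ form a subset of those occurring for $E=\bm{1}$ (because $M(E)$ and $M(K)/E$ coloop-free force $M(K)$ coloop-free), but this comparison is needed and is not made in your write-up.
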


\begin{proof}
Note $R$ is semisimple if and only if $\Delta(F) = L(F)$ for any $F \in \F$.  By Lemma~\ref{lem:char}, this is the case if the bilinear form on $\cU_F^E$ has full rank for any $E,F \in \F$, which is the case if and only of the determinant of the form is non-zero.  By Corollary~\ref{cor-det}, the determinant of the form on $\cU_F^E$ is \[\prod_{F< K \leq E} \left(\sum_{i \in F \setminus K} a(i) \right)^{\beta(M(F)/K) \mu^+(K)}.\]
As $\mu^+(M)$ is a positive integer for any $M$ and $\beta(M)=0$ if and only if $M$ is connected, all the determinants are nonzero if and only if $p$ does not divide $\sum_{i \in F \setminus K} a(i)$ for any $F<K$ such that $M(F)/K$ is connected.
\end{proof}

\subsection{Examples}

Using the above method, one can theoretically compute characters for matroidal Schur algebras.  We conclude this section with some small examples.

\subsubsection{The multiple point matroid $M_n$}

Let $M_n$ denote the matroid on $n$ elements whose bases are the single element subsets.  The only coloop-free flats of $M_n$ are $\bm{1}$ and $\bm{0}$.  By Corollary~\ref{cor-semisimple}, $R(M_n)$ is semisimple if and only if $p$ does not divide $n$.

Recall that the dimension of the space $\dim \cU^{\bm{1}}_{\bm{0}} = \mu^+(M_n^*) = n-1$.  Thus $\ch(\Delta(\bm{1})) = e(\bm{1})+ (n-1) e(\bm{0})$.

Characters: 
\[
\ch(L(\bm{1})) = 
\begin{cases} 
e_{\bm{1}} + (n-2)e_{\bm{0}} & \mathrm{if}\; p \mid n  \\
e_{\bm{1}} + (n-1)e_{\bm{0}}& \mathrm{otherwise}  \end{cases}
\]

Multiplicity:
\[ [\Delta(\bm{1}):L(\bm{0})] = \begin{cases} 
1 & \mathrm{if}\; p \mid n  \\
0 & \mathrm{otherwise}  \end{cases}
\]

\subsubsection{The dual $M_n^*$ of the multiple point matroid}

Again, the only coloop-free flats of $M_n$ are $\bm{1}$ and $\bm{0}$, and $R(M_n^*)$ is semisimple if and only if $p$ does not divide $n$.

We now have $\dim \cU^{\bm{1}}_{\bm{0}} = \mu^+(M_n) = 1$.  Thus $\ch(\Delta(\bm{1})) = e(\bm{1})+ e(\bm{0})$.

Characters: 
\[
\ch(L(\bm{1})) = 
\begin{cases} 
e_{\bm{1}} + e_{\bm{0}} & \mathrm{if}\; p \mid n  \\
e_{\bm{1}} & \mathrm{otherwise}  \end{cases}
\]

Multiplicity:
\[ [\Delta(\bm{1}):L(\bm{0})] = \begin{cases} 
1 & \mathrm{if}\; p \mid n  \\
0 & \mathrm{otherwise}  \end{cases}
\]

\subsubsection{The graphical matroid $M_G$ for the complete graph $G=K_4$}

In this case, in addition to $\emptyset$ and $I$, there are four intermediate coloop-free flats, each of rank 2 and isomorphic to $M_3^*$.  We denote them by $A_i$ for $i=1,2,3,4$.  Note that $M_G/A_i = M_3$ for any $i$.

We have $\dim \cU^{\bm{1}}_{\bm{0}} = \mu^+(M_G^*) = 6$.  Thus 
\[\ch(\Delta(\bm{1})) = e(\bm{1})+ e(A_1)+e(A_2)+e(A_3)+e(A_4)+ 6 e(\bm{0}).\]
\[\ch(\Delta(A_i)) = e(A_i)+ 2e(\bm{0}).\]

Characters:
\[
\ch(L(\bm{1})) = 
\begin{cases} 
e(\bm{1})+ e(A_1)+e(A_2)+e(A_3)+e(A_4)+ 4 e(\bm{0}) & \mathrm{if}\; p=2  \\
e(\bm{1})+ 3 e(\bm{0}) & \mathrm{if}\; p=3  \\
e(\bm{1})+ e(A_1)+e(A_2)+e(A_3)+e(A_4)+ 6 e(\bm{0}) & \mathrm{otherwise}  \end{cases}
\]

Multiplicity:
\[ [\Delta(\bm{1}):L(\bm{0})] = \begin{cases} 
2 & \mathrm{if}\; p = 2  \\
3 & \mathrm{if}\; p = 3  \\
0 & \mathrm{otherwise}  \end{cases}
\]

\subsection{A Jantzen-Type Sum Formula}
Jantzen's sum formula is a useful tool for computing the characters of simple modules for reductive groups in small cases.  Here we observe that there is a Jantzen-type sum formula in our setting as well. As the proof is basically identical to that of Jantzen's case, we simply state the formula and omit the proof.  For Jantzen's formula and proof, see \cite[Prop. II.8.19]{Jan}.

Let $M$ be a matroid, $k$ a finite localization of $\Z$ and $a: I \to k^\times$ a weight function.  Let $R$ be the corresponding quasi-hereditary $k$-algebra defined by the construction in Section \ref{sec:matroid}.

Let $\nu_p: k \to \Z$ denote the $p$-adic valuation. 

\begin{theorem}
Let $\mathbb F$ be a field with $\mathrm{char}(\mathbb{F}) = p > 0$ such that $p$ is a place of $A$.  For each $F \in \F$, there is a filtration of $R$-modules
\[ \Delta(E) = \Delta(E)^0 \supset \Delta(E)^1 \supset \Delta(E)^2 \supset \dots\]
such that
\[ \sum_{i>0} \ch \Delta(E)^i = \sum_{K >E} \beta(M(E)/K) \nu_p \left(\sum_{i\in E \setminus K} a(i)\right) \ch \Delta(K),\]
and
\[ \Delta(E)/\Delta(E)^1 \cong L(E).\]
\end{theorem}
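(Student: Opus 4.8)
The plan is to import Jantzen's filtration machinery in the axiomatic form it takes for quasi-hereditary cellular algebras equipped with a contravariant bilinear form, and then evaluate the resulting sum formula using the BSV determinant of Corollary~\ref{cor-det}. The starting point is the observation (Lemma~\ref{lem:char} and the discussion preceding it) that each standard module $\Delta(E)$ carries a canonical $\Z$-valued symmetric bilinear form, namely the restriction of $\la,\ra$ on $\cB_E$ to the cell submodule $\cU_E \subset \cB_E$, and that this form is defined over the localization $k$ of $\Z$ before any reduction. This is precisely the data Jantzen's construction needs: a lattice with a bilinear form over a discrete valuation ring (the localization $k_{(p)}$), whose reduction mod $p$ has radical $L(E)$. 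So the first step is to set up, for the chosen prime $p$, the standard $p$-adic filtration $\Delta(E)^i = \{v \in \Delta(E) \mid \la v, \Delta(E)\ra \subset p^i k_{(p)}\}$ (after reduction mod $p$), observe $\Delta(E)^0 = \Delta(E)$ and $\Delta(E)/\Delta(E)^1 \cong L(E)$ by definition of the radical, and record the general sum identity
\[ \sum_{i > 0} \ch \Delta(E)^i = \sum_{K} \nu_p\bigl(\text{(partial determinant of the form on the $K$-subquotient of }\Delta(E)\text{)}\bigr)\,\ch \Delta(K), \]
which is the content of \cite[Prop.~II.8.19]{Jan} transported to this setting; as the authors say, the proof is verbatim Jantzen's once one has the $\Delta$-filtered structure and the contravariant form, both of which are supplied by Sections~\ref{sec:Ringel data}--\ref{sec:matroid}.

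The second step is purely combinatorial bookkeeping: identify the "partial determinant" attached to each $K$ with the factor in Corollary~\ref{cor-det}. The form on $\Delta(E) = \cU_E = \bigoplus_{E' \le E} \cU^{E'}_E$ is block-diagonal along the weight grading (Lemma~\ref{lem:char}), and restricting attention to the weight-$E'$ block for a fixed $E' \le E$ one is looking at the form on $\cU^{E'}_E = \cU(M(E)/E')$, whose determinant by Corollary~\ref{cor-det} is $\prod_{E < K \le E'}\bigl(\sum_{i \in E\setminus K} a(i)\bigr)^{\beta(M(E)/K)\mu^+(K)}$. Summing the valuations of these block determinants over all $E'$, reorganizing the sum so that it is indexed by $K$ rather than by $E'$, and using that $\sum_{K \le E'} \mu^+(K) = \dim \cU^{E'}_E$ is exactly the dimension of the weight-$E'$ space of $\Delta(K)$ (again Corollary~\ref{cor:standard module} / the $\mu^+$ dimension count of Lemma~\ref{lem:dim}), one sees that the coefficient of $\ch\Delta(K)$ in Jantzen's formula collapses to $\beta(M(E)/K)\,\nu_p\bigl(\sum_{i \in E\setminus K} a(i)\bigr)$, which is the asserted formula. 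The role of $\beta(M(E)/K) = 0$ unless $M(E)/K$ is connected is just that only connected subquotients contribute, consistent with Theorem~\ref{cor-semisimple}.

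The main obstacle I expect is not the determinant evaluation — that is essentially a substitution — but rather checking carefully that Jantzen's argument really does go through unchanged in this axiomatic framework, i.e.\ that the three ingredients it relies on are all present: (1) $\Delta(E)$ has a nondegenerate (over $k$) contravariant form taking values in $k_{(p)}$ with the right radical mod $p$; (2) there is enough functoriality of these forms under the standard-filtration quotients so that the "sum over subquotients" telescopes correctly — this is where one must know that the $\Delta$-filtration of $\cB_x$ constructed in the proof that $\cB$ is tilting has subquotients $\Delta_y \otimes (\cUc^y_x)^*$ with the form on each layer compatible with the BSV form; and (3) the base ring is a localization of $\Z$ so that $k_{(p)}$ is a DVR and valuations behave additively. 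Since the paper explicitly restricts to $k$ a finite localization of $\Z$ and has already established the $\Delta$-filtered and cellular structure with its contravariant form, all three are in hand, so the honest work is tracking signs and indices rather than proving anything new; this is exactly why the authors are content to state the theorem and defer to \cite{Jan} for the proof.
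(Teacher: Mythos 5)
The paper itself offers no proof of this theorem (it explicitly defers to \cite[Prop.~II.8.19]{Jan}), and your architecture is exactly the intended one: define $\Delta(E)^i$ $p$-adically via the contravariant form on the cell module over the localization of $k$ at $p$, note $\Delta(E)/\Delta(E)^1\cong L(E)$ since the form reduces to the one whose radical defines $L(E)$, get the elementary Smith-normal-form identity expressing $\sum_{i>0}\ch\Delta(E)^i$ as $\sum_{E'}\nu_p\bigl(\det\la\,,\,\ra|_{\pi_{E'}\Delta(E)}\bigr)e(E')$ block by block (the blocks are orthogonal, as in Lemma~\ref{lem:char}), and then feed in the Brylawski--Schechtman--Varchenko determinant. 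One remark: your worry (2) is unnecessary. Once you have the blockwise valuation identity, rewriting the right-hand side in the $\ch\Delta(K)$ is pure bookkeeping in the Grothendieck group (the $\ch\Delta(K)$ are unitriangular in the $e(F)$); no compatibility with the $\Delta$-filtration of $\cB_x$ from the tilting argument is needed, which is why this abstract setting is actually easier than Jantzen's.

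The genuine gap is in the step you call ``essentially a substitution.'' The identity you use to collapse the coefficient of $\ch\Delta(K)$ --- that ``$\sum_{K\le E'}\mu^+(K)=\dim\cU^{E'}_E$ is exactly the dimension of the weight-$E'$ space of $\Delta(K)$'' --- is garbled: it equates a sum over $K$ with a quantity attached to a single $K$, and neither reading is a true identity (already for $U_{2,3}$ the interval sum of $\mu^+$ is not the rank of $\cU$). What the reindexing actually needs is the pointwise statement that, in the determinant of the form on the weight-$E'$ block $\cU(M(E)/E')$ of $\Delta(E)$, the exponent of the factor $\sum_{i\in E\setminus K}a(i)$ attached to a flat $K$ between $E'$ and $E$ equals $\beta(M(E)/K)$ times the dimension of the weight-$E'$ space of $\Delta(K)$, i.e.\ times $\mu^+\bigl((M(K)/E')^*\bigr)$ by Lemma~\ref{lem:dim}. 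Plugging in Corollary~\ref{cor-det} read literally, with exponent $\beta\cdot\mu^+(K)$, does \emph{not} reassemble $\ch\Delta(K)$, because the weight-space dimensions of $\Delta(K)$ are M\"obius numbers of \emph{dual} minors (internally passive basis counts), not $\mu^+(M(K)/E')$. So the honest work is to rederive the relative determinant directly from \cite[Thm.~4.16]{BV} through the isomorphism $\cU(N)\cong\cF^{\,\bullet}(N^*)$ applied to $N=M(E)/E'$, keeping track that dualization converts the exponent into $\beta$ of the contraction times the dual M\"obius number of the restriction; with that exponent the double sum over pairs $(E',K)$ reindexes exactly to $\sum_{K>E}\beta(M(E)/K)\,\nu_p\bigl(\sum_{i\in E\setminus K}a(i)\bigr)\ch\Delta(K)$. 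You should also record two hypotheses your setup silently uses: contravariance of the form with respect to the involution $i$ (so each $\Delta(E)^i$ is an $R$-submodule; true by construction here), and nondegeneracy over the fraction field of $k$, i.e.\ that the sums $\sum_{i\in E\setminus K}a(i)$ are nonzero in $k$, which is what makes the valuations finite and the filtration terminate.
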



\section{Kook-Reiner-Stanton convolution formula}
\label{sec:KRS formula}

In~\cite[Eq. (2.1)]{KRSlap}, Kook-Reiner-Stanton show that
\[ |\mathrm{bases~of~}M| = \sum_{\mathrm{flats~of~}M} \left| \substack{\mathrm{internally~passive}\\ \mathrm{bases~}B_1~\mathrm{of}~V }\right| \times \left| \substack{\mathrm{externally~passive} \\ \mathrm{bases~}B_2~\mathrm{of}~M/V }\right|. \]
In fact, they realize this equation as an equality of dimensions coming from a spectral decomposition for a Laplacian operator on the vector space $\cB(M)$. 

The formula~\cite[Eq. (2.1)]{KRSlap} can also be seen as appearing in the representation theory of the algebra $R_k(M)$, whenever the algebra $R_k(M)$ is semi-simple (e.g., if $k$ is a field of characteristic zero).  In this case, the standard modules $\Delta(E) \cong \cU(E)$ are simple, and the $R_k(M)$-module decomposes as a direct sum:
\begin{equation}
\label{eq:springer}
 \cB(M) \cong \bigoplus_{E \in \F} \cU(E) \otimes \cUc(M/E).
 \end{equation}
Note that in comparing dimensions, one recovers~\cite[Eq. (2.1)]{KRSlap}.

If $R_k(M)$ is not semisimple, equation~(\ref{eq:springer}) is not quite true.  Instead, the module $\cB(M)$ is tilting and the equation only holds after passing to the Grothendieck group.  On the other hand, one can consider the decomposition into indecomposable direct summands:
\[\cB(M) \cong \bigoplus_{E \in \F} T(E) \otimes  \widecheck{L}(E),\]
where $\widecheck{L}(E)$ denotes the simple module of $\Rc_k(M)$ corresponding to $E\in \F$.

\begin{question}
Are there combinatorial objects whose counts give the dimensions of $T(E)$ and $\widecheck{L}(E)$?
\end{question}

Motivated by~\cite[Eq. (2.1)]{KRSlap}, Kook-Reiner-Stanton proved the following convolution formula for the Tutte polynomial of a matroid that appears in~\cite{KRSconv}:
\[T_M(x, y)= \sum_{A\subset M} T_{M(A)}(0, y) T_{M/A}(x, 0).\]
Setting the variables $x$ and $y$ to $1$ recovers their original equation.  Proudfoot-Webster~\cite{PW} give a geometric interpretation of this formula under the specialization $y=1$.  

\begin{question}
Does there exist a characteristic $p$ version of the convolution formula, or at least its specialization at $y=1$?
\end{question}

\bibliography{refs}
\bibliographystyle{amsalpha}

\end{document}